\theoremstyle:=definition,remark,plain\do{%
        \expandafter\g@addto@macro\csname th@\theoremstyle\endcsname{%
            \addtolength\thm@preskip\parskip
            }%
        }
\definecolor{dnrbl}{rgb}{0,0,0.3}
\definecolor{dnrgr}{rgb}{0,0.3,0}
\definecolor{dnrre}{rgb}{0.5,0,0}
\theoremstyle{plain}
\newtheorem{thm}{Theorem}[section]
\newtheorem{prop}[thm]{Proposition}
\newtheorem{lem}[thm]{Lemma}
\newtheorem{coro}[thm]{Corollary}
\theoremstyle{definition}
\newtheorem{defi}[thm]{Definition}
\let\c@table\c@figure
\newcommand{\Nat}{\mathbb{N}}
\newcommand{\restr}{\upharpoonright}  
\newcommand{\un}{\uparrow} 
\newcommand{\de}{\downarrow} 
\DeclarePairedDelimiter{\ceil}{\lceil}{\rceil}
\DeclarePairedDelimiter{\dbra}{\llbracket}{\rrbracket}
\newcommand{\LL}{\mathcal{L}}
\newcommand{\LLast}{\mathcal{L}^{\ast}}
\newcommand{\XX}{\mathcal{X}}
\newcommand{\YY}{\mathcal{Y}}
\newcommand{\DD}{\mathcal{D}}
\newcommand{\dd}{\mathbf{d}}
\newcommand{\DDast}{\mathcal{D}^{\ast}}
\newcommand{\MM}{\mathcal{M}} 
\newcommand{\MMast}{\mathcal{M}^{\ast}} 
\newcommand{\CC}{\mathcal{C}}
\newcommand{\CCh}{\hat{\mathcal{C}}}
\newcommand{\wgt}[1]{\mathop{\mathtt{wgt}_n}\/\left({#1}\right)}
\newcommand{\PP}{\mathcal{P}}
\newcommand{\ml}{Martin-L\"{o}f }
\newcommand{\eg}{e.g.\ }
\newcommand{\ie}{i.e.\ }
\newcommand{\ce}{c.e.\ }
\newcommand{\pf}{prefix-free }
\newcommand{\VV}{\mathcal V}
\newcommand{\FF}{\mathcal F} 
\newcommand{\twome}{2^{\omega}} 
\newcommand{\twomel}{2^{<\omega}}  
\newcommand{\muast}{\mu^{\ast}}
\renewenvironment{abstract}
 { \normalsize
  \list{}{
    \setlength{\leftmargin}{.0cm}%
    \setlength{\rightmargin}{\leftmargin}%
    }%
  \item {\bf \abstractname.} \relax}
 {\endlist}
\title{Equivalences between learning of data and probability distributions,
and their applications
\thanks{Barmpalias was supported by the 
1000 Talents Program for Young Scholars from the Chinese Government No.~D1101130, 
NSFC grant No.~11750110425 and grant No.~ISCAS-2015-07 from the Institute of Software. 
Frank Stephan is supported in part by the Singapore Ministry
of Education Academic Research Fund MOE2016-T2-1-019 / R146-000-234-112.
The work was partially done while the authors were visiting (and being supported by) 
the Institute for Mathematical Sciences, National University of Singapore in 2017.}}
\author{George Barmpalias  \and Nan Fang \and Frank Stephan}
\date{\today}
\begin{document}
\maketitle
\begin{abstract}
Algorithmic learning theory traditionally studies the learnability of
effective infinite binary sequences (reals),
while recent work by   \citep{VITANYI201713}
and  \citep{Bienvenu2014} 
has adapted this framework to the study of learnability of effective 
probability distributions from  random data.
We prove that for certain families of probability 
measures that are parametrized by reals,
learnability of a subclass of probability measures is equivalent to learnability
of the class of the corresponding real parameters. This equivalence
allows to transfer results from classical algorithmic theory to
learning theory of probability measures. We present a number of such applications,
providing many new results regarding EX and BC learnability of classes of measures,
thus drawing parallels between the two learning theories. 
\end{abstract}
\vspace*{\fill}
\noindent{\bf George Barmpalias}\\[0.5em]
\noindent
State Key Lab of Computer Science, 
Institute of Software, Chinese Academy of Sciences, Beijing, China.\\[0.2em] 
\textit{E-mail:} \texttt{\textcolor{dnrgr}{barmpalias@gmail.com}}.
\textit{Web:} \texttt{\textcolor{dnrre}{http://barmpalias.net}}\par
\addvspace{\medskipamount}\medskip\medskip

\noindent{\bf Nan Fang}\\[0.2em]
\noindent Institut f\"{u}r Informatik, Ruprecht-Karls-Universit\"{a}t Heidelberg, Germany.\\[0.1em]
\textit{E-mail:} \texttt{\textcolor{dnrgr}{nan.fang@informatik.uni-heidelberg.de}.}
\textit{Web:} \texttt{\textcolor{dnrre}{http://fangnan.org}} \par
\addvspace{\medskipamount}\medskip\medskip

\noindent{\bf Frank Stephan}\\[0.5em]  
Department of Mathematics and
School of Computing,
National University of Singapore, Republic of Singapore.\\[0.2em]
\textit{E-mail:} \texttt{\textcolor{dnrgr}{fstephan@comp.nus.edu.sg.}}
\textit{Web:} \texttt{\textcolor{dnrre}{http://www.comp.nus.edu.sg/$\sim$fstephan}}

\vfill \thispagestyle{empty}
\clearpage

\section{Introduction}\label{N3sKx4pC1}
The present work concerns the following informally stated  general 
problem, which we study in the context of 
formal language identification and algorithmic learning theory:
\begin{equation}\label{v9CcOK3DX6}
\parbox{12cm}{
Given a probability distribution $\PP$ and a sufficiently large sample of
randomly chosen
data from the given distribution, learn or estimate
a probability distribution with respect to which the sample has been randomly sampled.
}
\end{equation}
Problem \eqref{v9CcOK3DX6} has a long history in statistics (\eg see \citep{Vapnik:1982:EDB})
and has more recently been approached in the context of computational learning,  
in particular the probably approximately correct (PAC) learning model, starting with
\citep{Kearns:1994:LDD}. The same problem was recently approached in the context of
 Algorithmic Learning Theory, in the tradition of \citep{GOLD1967447},
 and Kolmogorov complexity
by  \citep{VITANYI201713}.\footnote{Probabilistic methods and 
learning concepts in formal language and algorithmic learning theory 
have been studied long before \citep{VITANYI201713},
see \citep{Pitt:1989:PII} and the survey \citep{Ambainis:2001}. However most of this work focuses
on identifying classes of languages or functions using probabilistic strategies, rather than
identifying probability distributions as Problem \eqref{v9CcOK3DX6} asks.
Bienvenu and Monin \citep[Section IV]{Bienvenu:2012:VNB:235} do study a form of 
\eqref{v9CcOK3DX6} through a concept that they call {\em layerwise learnability} of 
probability measures
in the Cantor space,
but this is considerably different than 
 \citep{VITANYI201713} and the concepts of
Gold \citep{GOLD1967447}, the most important difference being that
it refers to classes of probability 
measures that are not necessarily contained in the computable
probability measures.}

The learning concepts discussed in   \citep{VITANYI201713} are
very similar in nature to the classic concepts of 
algorithmic learning
which are
motivated by the problem of language learning in the limit (see \citep{PINKER1979217})
but they differ in two major ways. 
In the classic setting, one starts with a class of languages or functions which have a finite
description (\eg they are computable) and the problem is to find an algorithm (often called
{\em a learner}) which can infer,
given a sufficiently long text from any language in the given class, or a sufficiently long
segment of the characteristic sequence of any function in the given class, a description
of the language or function in the form of a grammar  
or a program. More precisely, the desired algorithm makes
successive predictions given longer and longer segments of the input sequence, and is required
to converge to a correct grammar or program for the given infinite input.

If we apply the concept of identification in the limit to Problem  \eqref{v9CcOK3DX6}, according to
\citep{VITANYI201713}, one starts with a class $\VV$ of finitely describable
probability distributions (say,  the computable measures on the Cantor space) and we have the following differences with respect to the classic setting:
\begin{itemize}
\item the inputs on which the learner is supposed to succeed in the limit are random
sequences with respect to some probability distribution in the given class $\VV$,
and not elements of $\VV$;
\item success of the learner $\LL$ on input $X$ 
means that $\LL(X\restr_n)$ 
converges, as $n\to\infty$, to
a description of some element of $\VV$ with respect to which $X$ is random.
\end{itemize}
First, note that just as in the context of computational learning theory, here too
we need to restrict the probability distributions in Problem \eqref{v9CcOK3DX6}
to a class of `feasible' distributions, which in our case means computable distributions 
in the Cantor space. Second, in order to specify the learning concept we have described,
we need to define what we mean by random inputs $X$ with respect to a computable 
distribution $\PP$ in
the given class $\VV$ on which the learner is asked to succeed. 
\citep{VITANYI201713} ask the learner to succeed on every real $X$
which is {\em algorithmically random}, in the sense of \ml \citep{MR0223179}, with respect to some computable probability measure.\footnote{From this point on we will use the term 
(probability) {\em measure} instead of distribution, since the literature in algorithmic randomness
that we are going to use is mostly written in this terminology.} Then the interpretation of 
Problem  \eqref{v9CcOK3DX6} through the lenses of algorithmic learning theory
and in particular, the ideas of  \citep{VITANYI201713} is as follows:
\begin{equation}\label{sGTC9dqWCS}
\parbox{12cm}{Given a computable measure $\mu$ and an
algorithmically random stream $X$ with respect to $\mu$,
learn in the limit (by reading the initial segments of $X$) a
computable measure $\mu'$ with respect to which $X$ is
algorithmically random.}
\end{equation}

This formulation invites many different formalizations of learning concepts
which are parallel to the classic theory of algorithmic learning\footnote{EX-learning, BC-learning,
BC$^{\ast}$-learning etc. In \citep[Chapter VII.5]{Odifreddi:99} the reader can find
a concise and accessible introduction to these basic learning concepts and results.},
and although we will comment on some of them later on, this article is
specifically concerned with EX-learning ({\em explanatory learning}, one of the main
concepts in Gold \citep{GOLD1967447}), which means
that in \eqref{sGTC9dqWCS} we require the learner to eventually converge to a specific
description of the computable measure\footnote{as opposed to, for example,
eventually giving different indices of the same  
measure, or even different measures all of which satisfy the required properties.}
 with the required properties.

A {\em learner} is simply a function $\LL:\twomel\to\Nat$.
We refer to infinite binary streams (sequences) as reals.
 According to  \citep{GOLD1967447},
a class $\CC$ 
of computable elements of $\twome$ is EX-learnable if there exists a learner $\LL$
such that for each $Z\in\CC$ we have that $\lim_n \LL(Z\restr_n)$ exists and equals
an index of $Z$ as a computable function.\footnote{Here we identify subsets of natural numbers with their
characteristic function and assume a fixed G\"{o}del numbering 
$(\varphi_e)$
of all partial computable functions with binary values (also called {\em reals}), 
which gives an `index' to member of this class.} Similarly, $\CC$ is BC-learnable if
there exists a learner $\LL$ such that  for each $Z\in\CC$
there exists some $n_0$ such that for all $n>n_0$ the value of $\LL(Z\restr_n)$
is an index of $Z$.

In this paper we study explanatory (EX) learning, behaviorally correct (BC) learning 
and partial learning of probability
measures, based on the classic notion of algorithmic randomness 
by \citep{MR0223179}.
Given a measure $\mu$ on the reals and a real $X$, we say that $X$
is $\mu$-random if it is algorithmically random with respect to $\mu$.
We review 
algorithmic randomness with respect to
arbitrary measures in Section \ref{TTprzlKboB}.
\begin{defi}[EX learning of measures]\label{nAemubaidf}
A class $\CC$ of computable measures is EX-learnable if there exists
a computable learner $\LL:\twomel\to\Nat$ such that
for every $\mu\in\CC$ and every $\mu$-random real $X$ the limit
$\lim_n \LL(X\restr_n)$ exists and equals an index of a measure $\mu'\in\CC$ 
such that $X$ is $\mu'$-random.
\end{defi}
\citep{VITANYI201713} 
introduced this notion and 
observed that any uniformly computable family of measures is
EX-learnable. On the other hand, 
 \citep{Bienvenu2014}
showed that the class of computable measures is not EX-learnable,
and also not even BC-learnable in the following sense.


\begin{defi}[BC learning of measures]\label{1PuUYa3Jz4}
A class $\CC$ of computable measures is BC-learnable if there exists
a computable learner $\LL:\twomel\to\Nat$ such that
for every $\mu\in\CC$ and every $\mu$-random real $X$ 
there exists $n_0$ and $\mu'\in\CC$ such that for all $n>n_0$ the value
$\LL(X\restr_n)$ is an index of $\mu'$
such that $X$ is $\mu'$-random.
\end{defi}


One could consider a stronger learnability condition, namely that given $\mu\in\CC$ and
any $\mu$-random $X$ the learner identifies $\mu$ in the limit, when reading initial segments of $X$.
Note that such a property would only be realizable in classes $\CC$ where any $\mu,\mu'\in \CC$
are {\em effectively orthogonal}, which means that the classes of $\mu$-random and $\mu'$-random
reals are disjoint. In this case we call $\CC$ {\em effectively orthogonal}, and
Definitions \ref{nAemubaidf}, \ref{1PuUYa3Jz4} are equivalent with the versions where
$\mu'$ is replaced by $\mu$.
On the other hand we could consider a weakened notion of learning of a class $\CC$ of
computable measures, where 
given $\mu\in\CC$ and
any $\mu$-random $X$, the learner identifies {\em some}  computable measure $\mu$ 
(possibly not in $\CC$) in the limit, with respect to which $X$ is random, 
when reading initial segments of $X$.

\begin{defi}[Weak EX learning of measures]\label{Vr7RSPbOiE}
A class $\CC$ of computable measures is weakly EX-learnable if there exists
a computable learner $\LL:\twomel\to\Nat$ such that
for every $\mu\in\CC$ and every $\mu$-random real $X$ the limit
$\lim_n \LL(X\restr_n)$ exists and equals an index of a computable measure $\mu'$ 
such that $X$ is $\mu'$-random.
\end{defi}
\begin{defi}[Weak BC learning of measures]\label{MxTYFrcADu}
A class $\CC$ of computable measures is weakly BC-learnable if there exists
a computable learner $\LL:\twomel\to\Nat$ such that
for every $\mu\in\CC$ and every $\mu$-random real $X$ 
there exists $n_0$ and a computable measure $\mu'$, 
such that for all $n>n_0$ the value
$\LL(X\restr_n)$ is an index of $\mu'$
such that $X$ is $\mu'$-random.
\end{defi}
%
We note that 
the notions in Definitions
 \ref{nAemubaidf} and \ref{1PuUYa3Jz4} are not closed under subsets.
 In the following proof and the rest of this article, we use `$\ast$' to denote concatenation of strings.

\begin{prop}
There exist classes $\CC\subseteq\DD$ of measures such that $\DD$ is EX-learnable
 and $\CC$ is not even BC-learnable. 
\end{prop}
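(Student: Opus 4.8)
\emph{The plan.} The phenomenon to exploit is that, in contrast with classical function learning, a measure-learner is allowed to converge to \emph{any} measure of the class that makes the input random; shrinking the class can therefore destroy learnability by deleting the convenient targets while leaving all the hard-to-identify inputs in place. I will realize this inside a single uniformly computable family. For $e\in\Nat$ let $R_e=0^e\ast 1\ast 0^{\omega}\in\twome$, and for $i\in\Nat$ let $\mu_i$ be the uniform measure on $\{R_0,\dots,R_i\}$, assigning mass $1/(i+1)$ to each $R_e$ with $e\le i$. The map $(i,\sigma)\mapsto\mu_i(\sigma)$ is computable, so $\{\mu_i\}_{i\in\Nat}$ is uniformly computable. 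Each $R_e$ with $e\le i$ is an atom of $\mu_i$ and is therefore $\mu_i$-random, while for $e>i$ the real $R_e$ is computable with $\mu_i(R_e\restr_n)\to 0$, hence is covered by a $\mu_i$-test and is not $\mu_i$-random; so the $\mu_i$-random reals are exactly $\{R_0,\dots,R_i\}$, and $R_e$ is $\mu_j$-random \emph{iff} $e\le j$.

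Now put $\DD=\{\mu_i:i\in\Nat\}$, fix an infinite $B\subseteq\Nat$ having no infinite subset definable in first-order arithmetic (an arithmetically immune set, obtained by a routine diagonalization against the countably many arithmetic sets), and let $\CC=\{\mu_i:i\in B\}\subseteq\DD$. The class $\DD$ is EX-learnable: the learner that reads its input until it meets the unique $1$---thereby computing $e$---and thereafter outputs a fixed index of $\mu_e$ converges on every admissible input $R_e$ to $\mu_e\in\DD$, which has $R_e$ as an atom. (This is also a special case of the observation in \citep{VITANYI201713} that uniformly computable families are EX-learnable.)

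It remains to prove that $\CC$ is not BC-learnable, and this is exactly where the flexible target bites. Suppose $\LL$ BC-learned $\CC$. Every $R_e$ is an admissible input, since $B$ is infinite and so $R_e$ is $\mu_i$-random for some $i\in B$ with $i\ge e$; moreover the valid $\CC$-targets for $R_e$ are precisely $\{\mu_i:i\in B,\ i\ge e\}$, an end-segment of $B$. Thus the readable index $e$ is useless whenever $e\notin B$, and $\LL$ is forced to name an element of $B$ lying above $e$. By Definition \ref{1PuUYa3Jz4}, on each $R_e$ the learner converges semantically to a single $\mu_{h(e)}\in\CC$ with $R_e$ being $\mu_{h(e)}$-random, so $h(e)\in B$ and $h(e)\ge e$. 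Consider
\[
T=\{\,i:\ \exists e\le i\ \exists n_0\ \forall n\ge n_0\ \ \varphi_{\LL(R_e\restr_n)}=\mu_i\,\}.
\]
Each listed $i$ satisfies $\mu_i\in\CC$, so $T\subseteq B$; and $T$ is infinite because $h(e)\in T$ with $h(e)\ge e$ for every $e$. Finally $T$ is arithmetic: $R_e$ is computable uniformly in $e$, $\LL$ is computable, and the identity ``$\varphi_{\LL(R_e\restr_n)}=\mu_i$'' of total computable measures is $\Pi^0_2$. Hence $T$ is an infinite arithmetic subset of $B$, contradicting the choice of $B$; so no BC-learner for $\CC$ exists.

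\emph{The main obstacle.} The crux is the final step: verifying that the semantic limit of a putative BC-learner really yields an \emph{infinite} subset of $B$, controlling its position in the arithmetic hierarchy, and then selecting $B$ immune against precisely that level. The remaining ingredients---the randomness bookkeeping for atomic measures and the explicit EX-learner for $\DD$---are routine.
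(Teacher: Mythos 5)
Your proof is correct, and it reaches the proposition by a genuinely different route than the paper. The paper's $\DD$ consists of measures with pairwise incomparable finite supports: $\mu_i$ has two atoms $\sigma_{2i}\ast 0^{\omega},\sigma_{2i+1}\ast 0^{\omega}$, $\nu_i$ has the single atom $\sigma_i\ast 0^{\omega}$, and $\CC$ retains $\mu_n$ exactly when $n\in\emptyset'''$ and $\nu_{2n}$ exactly when $n\notin\emptyset'''$. On input $\sigma_{2n}\ast 0^{\omega}$ there is then a \emph{unique} admissible target in $\CC$, and which of the two candidates it is encodes ``$n\in\emptyset'''$''; since the semantic limit of a BC-learner on a fixed computable real, under the promise that it is one of two explicitly given measures, can be determined from $\emptyset''$, BC-learnability of $\CC$ would give $\emptyset'''\leq_T\emptyset''$. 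Your construction instead nests the supports, so that each input $R_e$ has \emph{infinitely many} admissible targets $\{\mu_i\ |\ i\in B,\ i\geq e\}$, and the contradiction is one of definability rather than a Turing reduction: the set $T$ of semantic limits of the putative learner is an infinite arithmetic (indeed $\Sigma^0_3$) subset of the arithmetically immune set $B$. Both arguments exploit the same underlying phenomenon—shrinking the class deletes the convenient targets while keeping every input admissible—and both are complete; the paper's version is more economical, pinning the contradiction at a fixed level of the arithmetical hierarchy with one bit of non-uniformity per input, while yours makes more vivid that a learner for $\CC$ would be forced to generate information about $B$ itself. One notational caution: the paper reserves $\varphi_e$ for partial computable reals and $(\mu_e)$ for the enumeration of partial computable measures, so the predicate inside your $T$ should read ``$\mu_{\LL(R_e\restr_n)}$ is total and equals $\mu_i$'' (and your ad hoc measures $\mu_i$ should be renamed to avoid clashing with that enumeration).
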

\begin{proof}
Let
$(\sigma_i)$ be a \pf sequence of strings, let 
$\mu_i$ be the measure with 
$\mu_i(\sigma_{2i}\ast 0^{\omega})=\mu_i(\sigma_{2i+1}\ast 0^{\omega})=1/2$
and let
$\nu_i$ be the measure such that $\nu_i(\sigma_{i}\ast 0^{\omega})=1$.
Define $\CC=\{\mu_i, \nu_{2j}\ |\ i\in \emptyset'''\ \wedge j\in \Nat - \emptyset'''\}$ and
$\DD=\{\mu_i,\nu_i\ |\  i\in\Nat\}$. Clearly $\CC\subseteq \DD$.
If $\CC$ was BC-learnable then $\emptyset'''$ could be decided in $\emptyset''$:
to decide if $n\in \emptyset'''$ we just need to 
check the limit guess of the learner on $\sigma_{2n}$.  This is a contradiction.
On the other hand the learner 
which guesses $\nu_i$ on each
extension of $\sigma_i$ is an EX-learner for $\DD$.
\end{proof}

 On the other hand, the weaker notions of 
 Definitions \ref{Vr7RSPbOiE} and \ref{MxTYFrcADu} clearly are closed under subsets.
 %
%
In Section \ref{PC4LgRBwRx} we also consider an analogue of
the notion of partial learning from \citep{STL1st} for measures, and prove an analogue
of the classic result from the same book that the computable reals are partially learnable.

\subsection{Our main results}\label{PC4LgRBwRx}
The aim of this paper is to establish a connection between the above notions of learnability
of probability measures, with the corresponding classical notions of learnability of reals
in the sense of  \citep{GOLD1967447}. To this end, we prove the following equivalence theorem,
which allows to transfer positive and negative learnability results from reals to probability
measures that are parametrized by reals, and vice-versa. 
Let $\MM$ denote the Borel measures on $\twome$.\footnote{Formal background 
definitions regarding the metric space of Borel measures are given in Section \ref{M7XSECbkKz}.}

\begin{thm}[The first equivalence theorem]\label{d3lcvKUJi}
Given a computable 
$f:\twome\to\MM$ 
let $\DD\subseteq \twome$ be an effectively closed set 
such that 
for any $X\neq Y$ in $\DD$
the measures $f(X), f(Y)$ are  effectively orthogonal. 
If $\DDast\subseteq\DD$ is a class of computable reals,  
$\DDast$ is EX-learnable if and only if $f(\DDast)$ is EX-learnable. 
The same is true of the BC learnability of $\DDast$.
\end{thm}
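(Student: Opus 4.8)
The plan is to reduce both directions to one effective decoding principle and then wrap the given learner with an input translation and an output translation. Two output translations are immediate to isolate. Since $f$ is computable, there is a computable $t$ sending any index $e$ of a computable real $X\in\DD$ (so $\varphi_e=X$) to an index $t(e)$ of the computable measure $f(X)$. Conversely I want a computable $s$ sending any index $m$ of a measure $\mu_m=f(X)$ with $X\in\DD$ back to an index of $X$; note $X$ is the \emph{unique} preimage because effective orthogonality forces $f$ to be injective on $\DD$. To build $s$ one computes $X\restr_k$ by eliminating length-$k$ strings $\sigma$ that are either \emph{dead} ($[\sigma]\cap\DD=\emptyset$, a c.e. event since $\DD$ is $\Pi^0_1$) or \emph{refuted} ($\mu_m$ orthogonal to every $f(Y)$ with $Y\in[\sigma]\cap\DD$); the unique survivor is $X\restr_k$. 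Both $t$ and $s$ carry correct indices to correct indices, so they will transport EX- and BC-correctness once the inputs are handled.

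The technical core is a \textbf{uniform separation lemma}: from the pairwise effective orthogonality of $\{f(X):X\in\DD\}$ together with compactness of the $\Pi^0_1$ set $\DD$, one extracts, uniformly in a string $\sigma$, a c.e. open $G_\sigma\subseteq\twome$ separating the compact subclass $B_\sigma=[\sigma]\cap\DD$ from its complement $C_\sigma=\DD\setminus[\sigma]$, in the sense that every real random for some $f(Y)$ with $Y\in B_\sigma$ avoids $G_\sigma$, while every real random for some $f(Y)$ with $Y\in C_\sigma$ enters $G_\sigma$ at a finite stage. I would obtain $G_\sigma$ from the first levels of the \ml tests witnessing $f(Y)\perp f(Y')$ for pairs $(Y,Y')\in B_\sigma\times C_\sigma$, amalgamating finitely many of these via compactness of $B_\sigma$ and $C_\sigma$ while keeping honest measure bounds over the whole branch. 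The same lemma underlies the \emph{refuted} clause of $s$ above.

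For the direction \textbf{from reals to measures}, take an EX-learner $M$ for $\DDast$. On a real $Z$ that is $f(X)$-random with $X\in\DDast$, decode $X$ using the separation lemma: at level $k$ declare $X\restr_k$ to be the unique live length-$k$ string $\sigma$ with $Z\restr_n\notin G_\sigma$, since every wrong $\sigma$ has $X\in C_\sigma$ and is eventually caught by $G_\sigma$, whereas the true prefix has $X\in B_\sigma$ and is never caught. These decoded prefixes converge bitwise to $X$; feed them to $M$ under the standard \emph{restart-on-change} discipline so that after finitely many revisions $M$ reads genuine increasing prefixes of $X$ and converges to an index $e$ with $\varphi_e=X$, and output $t(e)$. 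For the converse, \textbf{from measures to reals}, take an EX-learner $\LL$ for $f(\DDast)$. Reading prefixes of a computable $X\in\DDast$ I can approximate $f(X)$ to any precision, run its universal \ml test, and take the leftmost path of the $\Pi^0_1$ class $U_1^{c}$ of measure $\ge 1/2$; since the test can be taken nested this path $Z$ is $f(X)$-random, and being $\Delta^0_2$ its bits stabilize. Feeding this limit-approximation of $Z$ to $\LL$ with the same restart-on-change discipline, $\LL$ converges on the genuine $Z$ to an index $m$; effective orthogonality forces the learned measure to be exactly $f(X)$, so $s(m)$ is an index of $X$. In both directions the BC case is identical, as only finitely many restarts occur and $t,s$ preserve correctness of every late guess.

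The main obstacle is the uniform separation lemma: pairwise effective orthogonality only yields a separating test for each individual pair, and the real work is to amalgamate these—uniformly in $\sigma$ and with honest bounds on the entire branch—into a single $G_\sigma$ that simultaneously avoids all branch-randoms and captures all co-branch-randoms. Compactness of $\DD$ is what makes a finite amalgamation possible, but one must be careful that the \emph{true} branch's test does not accidentally swallow the sample $Z$ (a random real may well fail the first level of its own measure's test), which is exactly why the separation must be phrased branch-against-branch rather than measure-by-measure. Everything else—the translations $t$ and $s$, the existence of a $\Delta^0_2$ random real for a computable measure, and the restart-on-change feeding that preserves EX/BC convergence—is routine.
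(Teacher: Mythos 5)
There is a genuine gap, and it sits exactly where you placed the "main obstacle": the uniform separation lemma is false, not merely delicate. You ask for a single c.e.\ open set $G_\sigma\subseteq\twome$ such that \emph{every} real that is $f(Y)$-random for some $Y\in\dbra{\sigma}\cap\DD$ avoids $G_\sigma$, while every real random for some $f(Y)$ with $Y\in\DD\setminus\dbra{\sigma}$ enters it. If some $f(Y)$ with $Y\in\dbra{\sigma}\cap\DD$ has full support (every cylinder gets positive measure), then any nonempty open $G_\sigma$ contains a cylinder of positive $f(Y)$-measure and hence contains $f(Y)$-random reals, since the non-randoms form an $f(Y)$-null set; so the first requirement forces $G_\sigma=\emptyset$, which contradicts the second requirement as soon as the other side is nonempty. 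This is not a pathological case: for the Bernoulli parametrization $f_b$, which is the paper's flagship instance and the one driving all the corollaries, every measure in the range has full support, so no such $G_\sigma$ exists for any $\sigma$ with both sides nonempty. No amount of "branch-against-branch" amalgamation can repair this, because the obstruction is that a single open set cannot be null for a full-support measure without being empty. The repair requires stratifying by randomness deficiency: for each deficiency bound $c$, the set $P(X,c)$ of $Z\in\DD$ such that $X$ is $f(Z)$-random with deficiency $\le c$ is $\Pi^0_1(X)$ uniformly in $c$ and contains at most one point by orthogonality; the decoder guesses $c$, extracts the unique path, and increments $c$ whenever the class empties. That is precisely the device that absorbs your worry about "the true branch's test swallowing the sample," and without it the reals-to-measures direction does not go through. (Your function $s$ is salvageable, but only if "refuted" is read topologically --- the approximation ball of $\mu_m$ separating from $f^{\ast}(\sigma)$ --- using injectivity and compactness rather than orthogonality of random reals.)

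The converse direction is also shakier than "routine." Feeding $\LL$ a limit approximation of a $\Delta^0_2$ random real $Z$ under a restart-on-change discipline does not obviously yield $\lim_s\LL(\sigma_s)=\lim_n\LL(Z\restr_n)$: each bit of the approximation stabilizes, but infinitely many changes occur in total, and a naive watermark can sit for arbitrarily long on a prefix that is not yet an initial segment of $Z$, producing infinitely many evaluations of $\LL$ off the true real; you would need to exploit the lexicographic monotonicity of the leftmost-path approximation to guarantee that almost all queries are genuine prefixes of $Z$. The paper avoids constructing any particular random real at all: it integrates the learner's guesses over the whole space, weighting each index $e$ by the (approximated) $\mu^{\ast}$-measure of the reals on which $\VV$ currently outputs $e$, and selects a maximal-weight index with hysteresis (for EX) or a majority-measure extraction from a weighted set of indices (for BC). That majority-vote route is what you would need to adopt, or else supply the missing monotonicity argument for your single-sample reduction.
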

As a useful and typical 
example of a parametrization $f$ of measures by reals as stated in Theorem
\ref{d3lcvKUJi}, consider the function that maps each real $X\in \twome$
to the Bernoulli measure with success probability the real in the unit interval $[0,1]$
with binary expansion $X$. Note that the Bernoulli measures\footnote{By Bernoulli measure we mean the product
measure on the space of infinite binary strings, of the `biased coin' measure on $\{0,1\}$ 
that gives probability $q\in[0,1]$ on 0 and probability $1-q$ on 1.} are 
an effectively orthogonal class
(\eg consider the law of large numbers regarding the frequency of 0s in the limit).
The proof of Theorem \ref{d3lcvKUJi} is given in 
Section \ref{XS3k8tMXTU}. We note that It is possible to relax the hypothesis of the
`if' direction of Theorem \ref{d3lcvKUJi} for the case of 
EX-learning -- we give this extension in Section \ref{F8FbxE437v}.

The next equivalence theorem concerns weak learnability.\footnote{For 
the special case where we allow measures with atoms in our classes, 
Theorem \ref{WVmrKGDjbs} has
a somewhat easier proof than the one given in Section \ref{In6hhYHW}.}
\begin{thm}[The second equivalence theorem]\label{WVmrKGDjbs}
There exists a map $Z\to\mu_Z$ from $\twome$ to the continuous Borel measures 
on $\twome$, such that
for every class $\CC$ of computable reals, $\CC$ is EX/BC learnable
if and only if $\{\mu_Z\ |\ Z\in\CC\}$ is a weakly EX/BC learnable
class of computable measures, respectively.
\end{thm}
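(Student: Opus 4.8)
The plan is to construct an explicit map $Z\mapsto\mu_Z$ whose essential feature is that each $\mu_Z$ is a continuous measure (no atoms) concentrated on a single real that effectively codes $Z$, so that learning the measure and learning the real become interchangeable. I would let $\mu_Z$ be supported on the fibre $\{Z\}\times\twome$ of the product space, identifying $\twome\cong\twome\times\twome$ via interleaving of bits: on the first coordinate put the Dirac-style commitment to $Z$, and on the second coordinate put a nonatomic measure such as the uniform (Lebesgue) measure, so that $\mu_Z$ is continuous. The key structural consequence is that a real $X=\la Z',W\ra$ is $\mu_Z$-random if and only if $Z'=Z$ and $W$ is Martin-L\"of random relative to the appropriate measure; in particular the \emph{first coordinate of any $\mu_Z$-random real recovers $Z$ exactly}. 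This is the device that lets a learner read $Z$ off its input stream, and conversely lets a learner of measures be simulated by reading off $Z$.

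For the forward direction, suppose $\CC$ is EX-learnable by a learner $\LL$ for reals. To build a weak EX-learner $\LLast$ for $\{\mu_Z\mid Z\in\CC\}$, I would have $\LLast$, on input $X\restr_n$, extract the first-coordinate bits seen so far, feed them to $\LL$, and output an index (computed uniformly) for the measure $\mu_{\LL(\cdot)}$. Because any $\mu_Z$-random $X$ has first coordinate exactly $Z\in\CC$, the extracted bits converge to $Z$, so $\LL$ stabilises to an index for $Z$ and hence $\LLast$ stabilises to an index for $\mu_Z$, with respect to which $X$ is indeed random. The BC case is identical with `eventually equal to an index' in place of `converges to a fixed index'. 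The map $Z\mapsto(\text{index of }\mu_Z)$ must be shown computable, which is routine since $\mu_Z$ is uniformly computable from $Z$.

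For the reverse direction, suppose $\{\mu_Z\mid Z\in\CC\}$ is weakly EX-learnable by $\LLast$. Given a real $Z\in\CC$, I would manufacture a $\mu_Z$-random input to feed to $\LLast$: interleave $Z$ with a fixed (uniformly computable) Lebesgue-random $W$, which exists and can be produced as an oracle, or rather I would argue that the learner must succeed on \emph{every} $\mu_Z$-random $X$, and use the fact that from $\LLast$'s limiting index for some measure $\nu$ with $X$ being $\nu$-random, one can decode $Z$ because $\nu$-randomness of $X$ forces $X$'s first coordinate, and hence $Z$, to be determined; a uniform computable procedure then converts an index for $\nu$ into an index for the real $Z$. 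The delicate point is that weak learning only guarantees convergence to \emph{some} computable $\mu'$ making $X$ random, not necessarily to $\mu_Z$ itself, so I must ensure that \emph{any} such $\mu'$ still determines $Z$ through its $\mu'$-random reals agreeing with $X$ on the coding coordinate.

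The main obstacle, and the reason this is harder than the atom-permitting case alluded to in the footnote, is exactly this last issue: with continuous measures I cannot let $\mu_Z$ be a point mass exposing $Z$, so I must arrange the coding so robustly that every measure $\mu'$ witnessing weak success on a $\mu_Z$-random $X$ still pins down $Z$ uniformly. Concretely, I would need a lemma asserting that if $X$ is random for both a measure in our coded family and for $\mu'$, then the first-coordinate behaviour of $\mu'$ must concentrate on $Z$, so that $Z$ is computable from any index of $\mu'$ together with $X$; isolating and proving this rigidity of the coding under the weaker success criterion is where the real work lies, and it is what dictates the specific choice of the nonatomic second-coordinate measure and the interleaving used to define $Z\mapsto\mu_Z$.
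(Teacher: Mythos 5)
Your map $Z\mapsto\mu_Z$ is the same as the paper's (deterministic $Z$ on one interleaved coordinate, uniform measure on the other, so that the $\mu_Z$-randoms are exactly the $Z\oplus Y$ with $Y$ uniformly random, cf.\ Lemma \ref{CG1aUvOCf}), and your forward direction is essentially the paper's (it is an instance of the `only if' direction of Theorem \ref{d3lcvKUJi}). The reverse direction, however, has two genuine gaps. First, you propose to ``interleave $Z$ with a fixed (uniformly computable) Lebesgue-random $W$'' in order to manufacture an input for the measure-learner; no computable real is Lebesgue-random, so no such $W$ exists, and your fallback (``argue that the learner must succeed on every $\mu_Z$-random $X$'') does not by itself produce a \emph{computable} learner of reals, since a learner for $\CC$ sees only $Z\restr_n$. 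The missing device is the paper's majority vote: to predict $Z(m)$ one queries $\VV$ on \emph{all} strings $Z\restr_m\oplus\sigma$ with $\sigma\in 2^m$ and waits until at least a $2/3$ proportion of the guessed measures $\mu_e$ put more than half of the conditional mass of $Z\restr_m\oplus\sigma$ on the extension by some fixed bit $j$; because the second coordinate of $\mu_Z$ is the uniform measure, ``$\mu_Z$-almost every random $X$'' translates into ``a proportion tending to $1$ of the $\sigma\in 2^m$'', which is what makes the vote terminate and return $Z(m)$. This is not a routine step and is absent from your sketch.

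Second, the rigidity lemma you correctly identify as the crux --- that \emph{any} computable $\mu'$ for which some $Z\oplus Y$ is $\mu'$-random must satisfy $\mu'((Z\restr_n\oplus Y\restr_n)\ast Z(n))/\mu'(Z\restr_n\oplus Y\restr_n)\to 1$ --- is asserted as needed but not proved. The paper proves it as Lemma \ref{UfrLEbKHaN}, by fixing a rational $q<1$, taking $V_t$ to be the prefix-free set of minimal strings witnessing $t$ violations of the ratio bound, observing $\mu'(V_{t+1})<q\cdot\mu'(V_t)$, and extracting a $\mu'$-test capturing every offending $Z\oplus Y$. Without this lemma the decoding step (``any limiting measure $\mu'$ still pins down $Z$'') has no justification, and it is precisely what defeats the weak-learning obstacle you flag. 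A minor further point: for the EX case the bitwise decoding a priori yields a possibly different index of $Z$ at each length $n$, so an explicit stabilization step (as in the paper's definition of $\LL$ from $g_0$) is needed to converge to a single index.
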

%

Finally we give a positive result in terms of partial learning.
Let $(\mu_e)$ be a uniform enumeration of all partial computable measures (see Section~\ref{Zs7iCd895A}).
We say that a learner $\LL$ {\em partially succeeds} on a computable measure $\mu$ if
for all $\mu$-random $X$ there exists a $j_0$ such that
(a) there are infinitely many $n$ with $\LL(X\restr_n)=j_0$;
(b) if $j\neq j_0$ then there are only finitely  many $n$ with $\LL(X\restr_n)=j$;
(c) $\mu_{j_0}$ is a computable measure such that $X$ is $\mu_{j_0}$-random.
\begin{thm}\label{EHY3duonav}
There exists a computable learner which partially succeeds on all computable measures.
\end{thm}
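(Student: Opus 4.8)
The plan is to mirror the classical proof that the computable reals are partially learnable, replacing the relation ``$\varphi_e$ agrees with the input'' by ``$X$ is $\mu_e$-random'', and to reduce the whole problem to the partial identification of one index out of a nonempty, effectively enumerated family of $\Pi^0_2$ conditions. Fix a uniform universal \ml test $(U^{e}_{c})_{c\in\Nat}$ relative to $\mu_e$, as reviewed in Section~\ref{TTprzlKboB}; when $\mu_e$ is a total computable measure these are uniformly c.e.\ open sets with $\mu_e(U^{e}_{c})\le 2^{-c}$ and $\bigcap_c U^{e}_{c}$ is exactly the set of reals that are not $\mu_e$-random. For a pair $(e,c)$ consider the condition $\Psi(e,c)$: ``$\mu_e$ is a total computable measure and $X\notin U^{e}_{c}$''. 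The role of the bound $c$ is that, whereas ``$X$ is $\mu_e$-random'' is a $\Sigma^0_2$ statement about $X$, the statement $X\notin U^{e}_{c}$ is merely $\Pi^0_1(X)$; since totality of $\mu_e$ is $\Pi^0_2$, the conjunction $\Psi(e,c)$ is $\Pi^0_2$ in the initial segments of $X$.

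Two observations make this reduction work. First, every witness is good: if $\Psi(e,c)$ holds then $\mu_e$ is a total computable measure and $X\notin U^{e}_{c}\supseteq\bigcap_m U^{e}_{m}$, so $X$ is $\mu_e$-random, i.e.\ $\mu_e$ already satisfies clause (c) of the theorem. Second, a witness exists: since $X$ is $\mu$-random for some computable measure $\mu$, taking an index $e_0$ of $\mu$ together with a bound $c_0$ with $X\notin U^{e_0}_{c_0}$ (such a $c_0$ exists precisely because $X$ is $\mu$-random) gives $\Psi(e_0,c_0)$. Thus it suffices to emit, infinitely often and uniquely, a single index into $(\mu_e)$ coming from some pair satisfying $\Psi$. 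To prevent two pairs $(e,c)$ and $(e,c')$ sharing the same first coordinate from producing two different outputs, the learner will simply emit $e$ itself, which is a valid index of $\mu_e$ whenever $\Psi(e,c)$ holds; the bound $c$ is then used purely internally, in the search.

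Next I would attach to each pair $(e,c)$ a nondecreasing, computable confirmation counter $w_{(e,c)}(n)$, computed from $X\restr_n$ and $n$ steps of computation, with $\lim_n w_{(e,c)}(n)=\infty$ if and only if $\Psi(e,c)$ holds. Concretely $w_{(e,c)}(n)$ is the minimum of (i) the largest $k$ such that $\mu_e(\sigma)$ has converged and is consistent for all strings $\sigma$ of length $\le k$ within $n$ steps, and (ii) the number of stages up to $n$ during which no initial segment of $X$ has yet been enumerated into $U^{e}_{c}$. Part (i) tends to infinity exactly when $\mu_e$ is total, part (ii) exactly when $X\notin U^{e}_{c}$, and the minimum of two nondecreasing sequences is nondecreasing and diverges exactly when both do. Arranging the pairs into a single computable list $p_0,p_1,\dots$, the task becomes the purely combinatorial one of producing a computable output $o_n$ from the counters $\big(w_{p}(n)\big)$ so that exactly one value is emitted infinitely often and that value $p^{\ast}$ satisfies $\lim_n w_{p^{\ast}}(n)=\infty$, the existence of at least one such $p$ being guaranteed above. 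This is exactly the selection problem solved, in the language of agreement lengths, in the classical proof that the computable reals are partially learnable, and I would adapt that argument, emitting $e$ when the selected pair is $(e,c)$.

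The main obstacle is this selection step, namely securing a single index infinitely often. Emitting, say, the least index whose counter currently realises the running maximum does force every non-witness (bounded counter) to be dropped eventually, but it can leapfrog forever between two genuine witnesses whose counters overtake one another infinitely often, so that two indices would be emitted infinitely often. The delicate point, exactly as in the classical argument, is to lock onto a single witness while never being permanently trapped by a counter that is large but bounded: since ``the counter has stopped growing for good'' is not decidable from finite data, one cannot simply abandon the current candidate the moment it stalls. The remedy I would import is a priority construction with expanding patience: one keeps a current candidate together with a threshold, advances the threshold only after the candidate has exhibited genuinely new progress, and grants each candidate a waiting tolerance that grows with its priority, so that a counter which is merely slow (a true witness of some priority) is eventually never abandoned, while a counter that plateaus forever is escaped. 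The verification that under this scheme the least index with divergent counter is emitted cofinitely, hence infinitely often and uniquely, and every other index only finitely often, is then routine once the construction of \citep{STL1st} is set up with the counters $w_{p}$ in place of agreement lengths.
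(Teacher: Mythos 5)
Your first half is sound and essentially matches the paper's strategy: both arguments convert the $\Sigma^0_2$ statement ``$X$ is $\mu_e$-random'' into a $\Pi^0_1$-type condition by fixing a deficiency bound in advance (your test level $c$ in the pair $(e,c)$; the paper's condition $d_i(X\restr_n)\le i$ together with the padding lemma to guarantee a witness), and both handle totality of $\mu_e$ by an effective length-of-convergence approximation (your counter's part (i); the paper's expansionary stages). The reduction to a nonempty, effectively listed family of monotone counters that diverge exactly on the good candidates is correct.

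The gap is in the selection step, which is the actual content of the theorem. First, your stated target --- that the least index with divergent counter is ``emitted cofinitely'' --- is provably unachievable: a learner that on every $X$ random with respect to a computable measure eventually outputs a single correct index at all but finitely many stages would weakly EX-learn the class of all computable measures, contradicting the Bienvenu--Monin result quoted in Section~1. So on some inputs the correct index must be emitted only on a set of stages with infinite complement, and the outputs at the complementary stages must range over infinitely many \emph{distinct} indices, each occurring finitely often. Your ``expanding patience'' scheme does not produce this: if each candidate's tolerance is finite, a true but slow witness is abandoned infinitely often and whichever good candidate replaces it at those stages gets emitted infinitely often, killing uniqueness; if the tolerance is unbounded, a counter that plateaus forever is never escaped. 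The device that resolves this --- and the one the paper uses --- is padding: one outputs $p(e,j)$ with $\mu_{p(e,j)}=\mu_e$, where $j$ is driven up by every stage at which some higher-priority candidate shows fresh activity. Then for the least persistent candidate the parameter $j$ stabilizes and a single padded index is emitted infinitely often, while every other persistent candidate has its padded index shifted infinitely often, so each individual index it contributes is emitted only finitely often. By insisting on emitting the raw first coordinate $e$ of the selected pair you foreclose exactly this mechanism; as written, at the infinitely many ``resting'' stages of the top candidate your learner would emit the first coordinate of the next persistent pair infinitely often, violating clause (b) of partial success.
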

Theorems 
\ref{d3lcvKUJi} and \ref{WVmrKGDjbs} allow 
the transfer of learnability results from the classical theory
on the reals to probability measures.
Detailed background on the 
notions that are used in our results and their proofs is given in
Section \ref{M7XSECbkKz}.

\subsection{Applications of our main results}\label{Qx1nj3CucT}
The equivalences
in Theorems \ref{d3lcvKUJi} and \ref{WVmrKGDjbs}
have some interesting applications, some of which are
stated below, deferring their proofs to Section \ref{P6hQ2fd9BA}.
%
%

 \citep{AdlemanB91} showed that an oracle can EX-learn all
computable reals if and only if it is {\em high}, i.e.\ it computes a function that
dominates all computable functions. Using Theorem \ref{WVmrKGDjbs} we may
obtain the following analogue for measures.
\begin{coro}\label{gyGJY8fGL}
The computable (continuous) measures are (weakly) EX-learnable with oracle $A$
if and only if $A$ is high.
\end{coro}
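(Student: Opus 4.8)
The plan is to deduce the corollary from the relativized forms of Theorem~\ref{WVmrKGDjbs} and of the theorem of \citep{AdlemanB91}. The result of \citep{AdlemanB91} is already phrased for arbitrary oracles: $A$ EX-learns every computable real if and only if $A$ is high. Theorem~\ref{WVmrKGDjbs} relativizes verbatim, since the map $Z\mapsto\mu_Z$ is fixed and the two passages between learners of reals and weak learners of $\{\mu_Z\}$ are uniform effective operators, which one may run with any oracle $A$; thus for every class $\CC$ of computable reals, $\CC$ is EX-learnable with oracle $A$ if and only if $\{\mu_Z\ |\ Z\in\CC\}$ is weakly EX-learnable with oracle $A$. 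I will also use two small observations: weak learnability is closed under subsets (noted after Definition~\ref{MxTYFrcADu}), and for the class of \emph{all} computable measures the notions of EX-learning and weak EX-learning coincide, because there ``an index of a measure in the class'' means simply ``an index of a computable measure.''

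First I would prove necessity. Suppose the computable (continuous) measures are weakly EX-learnable with oracle $A$; in the non-continuous case read ``weakly EX-learnable'' as ``EX-learnable,'' which is the same by the last observation. Each $\mu_Z$ with $Z$ computable is a computable continuous measure, so $\{\mu_Z\ |\ Z\text{ computable}\}$ is a subclass of the given class, and by closure of weak learnability under subsets it too is weakly EX-learnable with oracle $A$. The relativized Theorem~\ref{WVmrKGDjbs}, applied with $\CC$ the class of all computable reals, then yields that all computable reals are EX-learnable with oracle $A$, whence $A$ is high by \citep{AdlemanB91}.

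For sufficiency, assume $A$ is high. By \citep{AdlemanB91} the oracle $A$ EX-learns all computable reals, so the relativized Theorem~\ref{WVmrKGDjbs} gives at once that the parametrized family $\{\mu_Z\ |\ Z\text{ computable}\}$ is weakly EX-learnable with oracle $A$. The substantive step, and the one I expect to be the main obstacle, is to upgrade this to the \emph{entire} class of computable (continuous) measures, which is not contained in the image of $Z\mapsto\mu_Z$ and therefore lies outside the reach of Theorem~\ref{WVmrKGDjbs}. Here I would build the learner directly from highness: fixing the enumeration $(\mu_e)$ of partial computable measures, on input $X\restr_n$ the $A$-learner searches for the least $e$ such that $\mu_e$ is a total computable measure and $X$ is $\mu_e$-random, an $e$ that exists for any computable (continuous) $\nu$ and any $\nu$-random $X$, for instance an index of $\nu$ itself. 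The difficulty is that ``$\mu_e$ is total'' is $\Pi^0_2$ while ``$X$ is $\mu_e$-random'' is only $\Sigma^0_2(X)$ -- it asserts $X\notin U^e_m$ for some level $U^e_m$ of the universal $\mu_e$-test, and each event $X\in U^e_m$ is merely c.e.\ in $X$ -- so the naive search is not the limit of an $A$-computable process. The plan is to replace the randomness test by a boundedness condition on the randomness deficiency and to use a function $g\le_T A$ dominating all computable functions to bound, in the limit, the stages at which deficiencies settle; a wrong index is then detected because its deficiency grows past the $g$-controlled threshold, while the true index keeps bounded deficiency. The delicate point is to organize this least-witness search so that the successive guesses genuinely \emph{converge} to a single final index (EX-learning), rather than only being eventually correct, and to verify that highness supplies exactly the settling information this convergence requires.
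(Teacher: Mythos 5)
Your overall strategy matches the paper's: the necessity direction is argued exactly as in the paper (restrict to the subclass $\{\mu_Z\}$, use closure of weak learnability under subsets, apply the relativized Theorem~\ref{WVmrKGDjbs}, then invoke \citep{AdlemanB91}), and for sufficiency the paper likewise abandons Theorem~\ref{WVmrKGDjbs} and builds an $A$-computable learner directly from highness, using a limit approximation to totality together with a computable approximation to the randomness deficiency. The one step you flag as unresolved --- forcing the successive guesses to \emph{converge} rather than merely be eventually correct --- is closed in the paper by a simple device you are missing: define $cost(e,\sigma)=e+d(e,\sigma)[|\sigma|]$, where $d(e,\sigma)[s]=\ceil{-\log\big(\sup\{\mu(\sigma)\mid\mu\in\mu_e[s]\}\big)}-K(\sigma)[s]$ approximates the $\mu_e$-deficiency, and let $\VV(\sigma)$ be the least $e\leq|\sigma|$ minimizing $cost(e,\sigma)$ subject to $h(e,|\sigma|)=1$, where $h\leq_T A$ is the limit approximation to the totality predicate supplied by highness (Martin's theorem); you do not actually need a dominating function beyond this. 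Convergence then follows because the additive term $e$ ensures that for any bound $k$ only finitely many indices ever have cost below $k$, correct indices keep $\sup_n cost(e,X\restr_n)$ finite, incorrect total indices have deficiency (hence cost) tending to infinity, and non-total indices are eventually vetoed by $h$; so the argmin stabilizes. With that device supplied, your argument is the paper's argument.
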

We may write EX$[A]$ to indicate that the
EX-learner is computable in $A$.
A class $\CC$ of measures is (weakly) EX$\ast[A]$-learnable for an oracle $A$, if
there exists an EX-learner $\LL\leq_T A$ for $\CC$ such that for each $X$,
the function $n\to \LL(X\restr_n)$
uses finitely many queries to $A$.
The following is an analogue of a result from \citep{KUMMER1996214}
about EX$\ast[A]$ learning of reals.
\begin{coro}\label{UT2HmCWUPe}
The class of computable measures 
is EX$\ast[A]$-learnable
if and only if $\emptyset''\leq_T A\oplus \emptyset'$.
\end{coro}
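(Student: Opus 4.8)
The plan is to transfer the corresponding classical characterisation for reals---due to \citep{KUMMER1996214}, namely that the class of \emph{all} computable reals is EX$\ast[A]$-learnable if and only if $\emptyset''\leq_T A\oplus\emptyset'$---to the measure setting, using Theorem~\ref{WVmrKGDjbs} for one direction and a direct construction for the other. The key preliminary observation is that for the \emph{full} class $\CC$ of all computable measures, the notions of EX$\ast[A]$- and weakly EX$\ast[A]$-learnability coincide: an admissible output is an index of some $\mu'$ with $X$ being $\mu'$-random, and the side condition $\mu'\in\CC$ of Definition~\ref{nAemubaidf} is now equivalent to $\mu'$ being computable, which is exactly the requirement of Definition~\ref{Vr7RSPbOiE}. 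Hence it suffices to characterise weak EX$\ast[A]$-learnability of $\CC$, and this is what lets us invoke the second equivalence theorem. Throughout I work with the relativisation of Theorem~\ref{WVmrKGDjbs} to an oracle $A$; here one has to check that the two learner transformations used in its proof are oracle-free and preserve the number of oracle queries, so that they carry EX$\ast[A]$-learners of reals to weakly EX$\ast[A]$-learners of $\{\mu_Z\ |\ Z\in\CC\}$ and back, keeping both the bound $\leq_T A$ and the per-input finiteness of $A$-queries.

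For the \emph{necessity} direction, suppose $\CC$ is EX$\ast[A]$-learnable, hence weakly EX$\ast[A]$-learnable. Since weak learnability is closed under subsets, the subclass $\{\mu_Z\ |\ Z\text{ computable}\}$ provided by Theorem~\ref{WVmrKGDjbs} is also weakly EX$\ast[A]$-learnable. By the relativised Theorem~\ref{WVmrKGDjbs} this is equivalent to the class of all computable reals being EX$\ast[A]$-learnable, and Kummer's theorem then yields $\emptyset''\leq_T A\oplus\emptyset'$.

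For the \emph{sufficiency} direction I would not route through Theorem~\ref{WVmrKGDjbs}, since it only returns a learner for the parametrised subfamily $\{\mu_Z\}$, whereas we must weakly learn \emph{every} computable measure---including atomic ones, whose random reals (\eg a computable point carrying a Dirac mass) are random for no continuous $\mu_Z$, so that the subfamily learner gives no guarantee on such inputs. Instead I would build the learner directly. Fix a reduction $\emptyset''=\Phi^{A\oplus\emptyset'}$ and a uniform enumeration $(\mu_e)$ of the partial computable measures. On input $X\restr_n$ the learner searches for the least index $e$ such that (i) $\mu_e$ is total, decided by running $\Phi$ with the $\emptyset'$-queries answered by the stage-$n$ approximation $\emptyset'\restr_n$, and (ii) $X\restr_n$ has not yet been enumerated into the universal $\mu_e$-\ml test (computed from the partial values of $\mu_e$ available by stage $n$); it then outputs this $e$. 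As $n\to\infty$ the approximations $\emptyset'\restr_n$ converge, so the totality decisions stabilise; an index of the true generating measure witnesses that a suitable $e$ exists, while every smaller index is eventually discarded, either as non-total (detected by $\Phi$) or because $X$ enters its test. Thus $\lim_n\LL(X\restr_n)$ exists and is an index of a total computable measure making $X$ random, giving weak---and therefore, by the opening observation, genuine---EX$\ast[A]$-learnability of $\CC$. Note this also pinpoints why the oracle is needed at all: the output must certify \emph{totality} of the chosen $\mu_e$, a $\Pi^0_2$ fact supplied by $\emptyset''\leq_T A\oplus\emptyset'$.

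The step I expect to be the main obstacle is verifying the defining constraint of the $\ast$-model: that along the whole run $n\mapsto\LL(X\restr_n)$ only \emph{finitely many} queries to $A$ are made, not merely finitely many for each fixed $n$. The point is that the search stabilises to a fixed least index $e_0$, after which only the totality of the finitely many $\mu_e$ with $e\le e_0$ is ever interrogated; since each such $\Phi$-computation uses finitely many $\emptyset'$-queries, and $\emptyset'\restr_n$ changes finitely often on each of them, the induced set of $A$-queries stabilises to a finite set. Making this bookkeeping precise---so that the transient behaviour before stabilisation also contributes only finitely many $A$-queries---is the technical heart of the argument, and it mirrors the finite-query management in Kummer's original construction for reals.
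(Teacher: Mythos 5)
Your necessity direction has a genuine gap. You route it through a relativised, query\mbox{-}counting version of Theorem~\ref{WVmrKGDjbs}, asserting that the learner transformations in its proof ``preserve the number of oracle queries''. The direction you actually need (from a weak learner $\VV$ for $\{\mu_Z\}$ to a learner for the reals) does not have this property: in that construction, on input $Z\restr_n$ the real-learner evaluates $\VV$ on \emph{all} strings $Z\restr_m\oplus\sigma$ with $\sigma\in 2^m$, $m\leq n$, so over the whole run it interrogates $\VV$ along initial segments of every real of the form $Z\oplus Y$. The EX$\ast[A]$ hypothesis only bounds the $A$-queries of $\VV$ along each \emph{individual} infinite input, and a union over uncountably many paths of finite query sets need not be finite (a learner that queries $A(n)$ only on the strings of a single fixed shape at each length $n$ makes at most one query per path but infinitely many over all strings). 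So the transformation does not carry EX$\ast[A]$-learnability of $\{\mu_Z\}$ to EX$\ast[A]$-learnability of $\CC$, and Kummer's theorem cannot be invoked. The paper sidesteps this entirely: by jump inversion it picks $B$ with $B'\equiv_T A\oplus\emptyset'$ and uses Lemma~\ref{pvABJZlMm2} to convert the EX$\ast[A]$-learner into an EX$[B]$-learner (the finite-query hypothesis is spent exactly here, to make the $B$-computable approximation to the oracle answers settle along each input), and only then applies the non-starred Corollary~\ref{gyGJY8fGL} to conclude that $B$ is high, whence $\emptyset''\leq_T B'\equiv_T A\oplus\emptyset'$. Your argument can be repaired by inserting this conversion as the first step.

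Your sufficiency direction takes a genuinely different route from the paper's, and its skeleton is sound: the paper instead uses Jockusch's theorem to obtain an $A$-computable listing of all total computable measure representations and then relativises the observation that uniformly computable families are EX-learnable, whereas you build a Kummer-style identification-by-enumeration learner using $\emptyset''\leq_T A\oplus\emptyset'$ to certify totality. Your instinct that Theorem~\ref{WVmrKGDjbs} cannot supply this direction (because of atomic measures) is correct, as is your opening observation that EX- and weak EX-learnability coincide for the full class. One repair is needed, though: discarding $e$ when ``$X\restr_n$ enters the universal $\mu_e$-test'' is wrong at any fixed level, since a real that is $\mu_e$-random with deficiency $c$ still enters $U_1,\dots,U_{c}$ of the universal test, so \emph{every} index of the true measure would eventually be discarded. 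You must search over pairs $(e,c)$, converging to the least pair with $\mu_e$ total and $X$ outside level $c$ of its test (equivalently, minimise a cost of the form $e+d_e(X\restr_n)$ as the paper does in its highness lemma). With that change your finite-query bookkeeping goes through.
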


If we apply Theorem \ref{d3lcvKUJi} we
obtain an analogue of the \citep{AdlemanB91} characterization 
with respect to Bernoulli measures.
\begin{coro}\label{P2b9kifIJR}
An oracle can EX-learn all computable Bernoulli measures if and only if it is high.
\end{coro}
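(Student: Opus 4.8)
The plan is to deduce Corollary~\ref{P2b9kifIJR} by specialising the first equivalence theorem (Theorem~\ref{d3lcvKUJi}) to the Bernoulli parametrisation, relativising everything to an arbitrary oracle $A$, and combining the result with the \citep{AdlemanB91} theorem that an oracle EX-learns all computable reals exactly when it is high. Throughout, let $f\colon\twome\to\MM$ send $X$ to the Bernoulli measure whose probability of the bit $0$ is the real in $[0,1]$ with binary expansion $X$; this $f$ is computable, and distinct biases give effectively orthogonal measures by the effective law of large numbers, as noted after Theorem~\ref{d3lcvKUJi}. Both the equivalence theorem and the Adleman--Blum characterisation relativise uniformly, so I read them in the form ``$\DDast$ is EX$[A]$-learnable iff $f(\DDast)$ is EX$[A]$-learnable'' and ``all computable reals are EX$[A]$-learnable iff $A$ is high''.

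For the forward (necessity) direction I would fix the effectively closed class $\DD=\{X\in\twome : X(2i+1)=0\text{ for all }i\}$, on which $f$ is injective: if two distinct members of $\DD$ had the same bias they would be the two binary expansions of a dyadic rational, but the expansion ending in $1^{\omega}$ is barred from $\DD$. Hence $f$ restricted to $\DD$ takes effectively orthogonal values on distinct points, so Theorem~\ref{d3lcvKUJi} applies with $\DDast$ the computable members of $\DD$. The map $Y\mapsto X$ with $X(2i)=Y(i)$ and $X(2i+1)=0$ is a computable homeomorphism of $\twome$ onto $\DD$ that is effective on indices, so $\DDast$ is EX$[A]$-learnable iff all computable reals are. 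Now suppose $A$ EX-learns all computable Bernoulli measures. Since that class is effectively orthogonal, EX-learnability is inherited by the subclass $f(\DDast)$ (on a $\mu$-random input with $\mu\in f(\DDast)$ the learner's limit index must, by orthogonality, name $\mu$ itself, which lies in $f(\DDast)$). By the relativised Theorem~\ref{d3lcvKUJi} this makes $\DDast$ EX$[A]$-learnable, hence all computable reals are EX$[A]$-learnable, hence $A$ is high by Adleman--Blum.

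For the reverse (sufficiency) direction I would start from a high $A$, which EX-learns all computable reals. The goal is to learn every computable Bernoulli measure, and here a single injective effectively closed domain cannot help, since such a domain is compact and its image under $f$ is a proper (Cantor) subset of the biases. Instead I would run the if-direction argument of Theorem~\ref{d3lcvKUJi} directly on the whole Bernoulli family: from any $f(X)$-random input the empirical frequency of $0$s converges to the bias, and because the Bernoulli measures are orthogonal the learner may aim straight at this bias rather than at a particular expansion, so the highness of $A$ lets it stabilise on a canonical index of the corresponding computable measure for every computable bias, including the dyadic and endpoint cases.

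The step I expect to be the main obstacle is exactly this coverage of all computable biases in the sufficiency direction: the compactness obstruction rules out importing it verbatim from a single application of Theorem~\ref{d3lcvKUJi}, so I must verify that the frequency-plus-modulus construction driven by $A$ indeed converges to a correct and stable index uniformly over all computable biases --- in particular handling dyadic biases, where the binary expansion is discontinuous, and the two degenerate endpoint measures. The remaining ingredients (effective closedness of $\DD$, injectivity of $f$ on $\DD$, the orthogonal-subset-closure of EX-learnability, and the uniform relativisation of Theorem~\ref{d3lcvKUJi} and of Adleman--Blum) are routine.
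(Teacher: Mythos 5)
Your necessity direction is essentially the paper's argument. You restore injectivity of the Bernoulli parametrisation by passing to an effectively closed copy of Cantor space on which no real is a tail of $1$s (your $X(2i+1)=0$ encoding plays the same role as the paper's $0\to 01$, $1\to 10$ encoding), you invoke closure of learnability under subsets for effectively orthogonal classes, and you then apply the relativised Theorem~\ref{d3lcvKUJi} together with the Adleman--Blum characterisation. All of these steps are sound and match Section~\ref{qLIwtAZfTH} of the paper.

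The sufficiency direction, however, has a genuine gap, and it is exactly the step you flag yourself. You are right that Theorem~\ref{d3lcvKUJi} cannot deliver this direction: any effectively closed $\DD$ on which $f_b$ is injective has image a proper subset of the Bernoulli family, so one must build an $A$-computable learner for \emph{all} computable Bernoulli measures directly. But your proposed mechanism --- track the empirical frequency of $0$s and ``aim straight at the bias'' --- does not produce what EX-learning requires, namely a stabilising \emph{index} of a computable measure. Frequency data yields arbitrarily good rational approximations to the bias $p$, but there is no way to pass from approximations of $p$ to a program computing $p$; the learner must instead search through a universal enumeration of candidate programs. This is what the paper's lemma in Section~\ref{P6hQ2fd9BA} does: it fixes a computable $g$ sending an index $e$ of a partial computable real $\varphi_e$ to an index of the Bernoulli measure with success probability $\varphi_e$, uses highness of $A$ to obtain $h\leq_T A$ that decides totality of $\varphi_e$ in the limit, and defines $\VV(\sigma)=g(e)$ for the least $e\leq|\sigma|$ minimising the cost $e+d(g(e),\sigma)[|\sigma|]$, where $d$ approximates the $\mu_{g(e)}$-randomness deficiency of $\sigma$, subject to $h(e,|\sigma|)=1$. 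Convergence follows because on a $\mu_p$-random input the correct indices have bounded cost while every incorrect or non-total candidate is eventually excluded by the deficiency term or by $h$. This construction (not the equivalence theorem) also handles the dyadic and endpoint biases you worry about, since every computable bias has some total $\varphi_e$ among the candidates. Without some such index-enumeration argument your sufficiency direction is not a proof.
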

%
\citep{BLUM1975125} showed the so-called non-union theorem
for EX-learning, namely that EX-learnability of classes of computable reals is not closed under union.
We may apply our equivalence theorem in order to prove an analogue for measures.
\begin{coro}[Non-union  for measures]\label{A3ce2GoRBo}
There are two EX-learnable classes of computable (Bernoulli) 
measures such that their union is not EX-learnable.
\end{coro}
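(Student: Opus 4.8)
The plan is to combine the classical non-union theorem of \citep{BLUM1975125} with the first equivalence theorem (Theorem~\ref{d3lcvKUJi}), using a Bernoulli parametrization that is injective and effectively orthogonal on the whole Cantor space so that the theorem applies with the largest possible domain.

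First I would fix a computable $f:\twome\to\MM$ sending $X$ to the Bernoulli measure with success probability $c(X):=\sum_{n} 2\,X(n)\,3^{-(n+1)}\in[0,1]$. The naive choice, mapping $X$ to the Bernoulli measure whose parameter has binary expansion $X$, is not injective on the dyadic reals, and two distinct reals mapping to the \emph{same} measure can never be effectively orthogonal. The ternary (Cantor set) coding $c$ repairs exactly this defect: $c$ is a computable injection of $\twome$ into $[0,1]$, so for $X\neq Y$ we get distinct parameters $c(X)\neq c(Y)$ and hence, via the law of large numbers for the limiting frequency of $0$s, effectively orthogonal measures $f(X),f(Y)$. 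With this $f$ I may take $\DD=\twome$, which is trivially effectively closed, so all hypotheses of Theorem~\ref{d3lcvKUJi} hold for every class of computable reals contained in $\DD$.

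Next I invoke the non-union theorem for EX-learning of reals \citep{BLUM1975125}: there exist EX-learnable classes $\CC_0,\CC_1$ of computable reals whose union $\CC_0\cup\CC_1$ is not EX-learnable. Since $\CC_0,\CC_1,\CC_0\cup\CC_1$ are all classes of computable reals contained in $\twome=\DD$, and since a computable real $X$ yields a computable parameter $c(X)$ and hence a computable Bernoulli measure $f(X)$, I can apply the equivalence of Theorem~\ref{d3lcvKUJi} to $\CC_0$ and to $\CC_1$ separately. This gives that $f(\CC_0)$ and $f(\CC_1)$ are EX-learnable classes of computable Bernoulli measures.

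Finally, because $f$ is a function we have $f(\CC_0\cup\CC_1)=f(\CC_0)\cup f(\CC_1)$. If this union were EX-learnable, then the equivalence of Theorem~\ref{d3lcvKUJi} applied with $\DDast=\CC_0\cup\CC_1$ would force $\CC_0\cup\CC_1$ to be EX-learnable, contradicting the choice of the $\CC_i$. Hence $f(\CC_0)\cup f(\CC_1)$ is not EX-learnable, and $f(\CC_0),f(\CC_1)$ are the required witnessing classes. The only genuine subtlety—and the one step needing care—is the choice of parametrization: one must guarantee that $f$ is injective and effectively orthogonal on all of $\twome$, which is precisely why the Cantor coding $c$ replaces the literal binary-value map; everything else is a mechanical translation through the equivalence theorem.
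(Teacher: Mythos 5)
Your proof is correct and follows the same overall strategy as the paper's: transfer the Blum--Blum non-union theorem through Theorem \ref{d3lcvKUJi} via an injective Bernoulli parametrization, and finish with the observation that $f(\CC_0\cup\CC_1)=f(\CC_0)\cup f(\CC_1)$. The only genuine divergence is how the injectivity/orthogonality defect of the binary-expansion map at dyadic parameters is repaired. The paper keeps the canonical map $f_b$ fixed and instead transforms the \emph{classes}, replacing $S,T$ by $\hat S,\hat T$ via the digit substitution $0\to 01$, $1\to 10$ so that no image real is dyadic; this costs an extra invariance lemma (Lemma \ref{C9sahblZM3}, packaged with the equivalence theorem as Lemma \ref{lBLzXkW7lq}) showing that EX/BC learnability of classes of reals is preserved under this computable translation. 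You instead change the \emph{parametrization}, composing with the ternary Cantor coding $c(X)=\sum_n 2X(n)3^{-(n+1)}$ so that $f$ is injective on all of $\twome$ and Theorem \ref{d3lcvKUJi} applies directly with $\DD=\twome$, with no invariance lemma needed. Both are sound; the paper's choice has the advantage that the resulting classes live inside the one fixed family $f_b(\widehat{\twome})$ reused in its other corollaries, while yours is marginally more self-contained here. One point you should make explicit: Theorem \ref{d3lcvKUJi} requires effective orthogonality of $f(X),f(Y)$ for \emph{all} distinct $X,Y\in\DD$, not just the computable ones, so you need that $\mu_p$-random reals obey the law of large numbers with frequency $p$ even for incomputable parameters $p$; this is true (and is the same fact the paper invokes for the Bernoulli family), but it is the hypothesis doing the work and deserves a sentence.
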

One can find applications of Theorem \ref{d3lcvKUJi}
on various more complex results in algorithmic learning theory.
As an example, we mention the characterization of low oracles for EX-learning that
was obtained in \citep{Gasarch1989214, SlamanSol:1991} (also see \citep{FORTNOW1994231}).
An oracle $A$ is low for EX-learning of classes of computable measures, if any
class of computable measures that is learnable with oracle $A$, is learnable without any oracle.
The characterization mentioned above is that, an oracle is low for EX-learning if and only if
it is 1-generic and computable from the halting problem.
This argument consisted of three steps, first showing that 1-generic oracles computable from the halting problem
are low for EX-learning, then that oracles that are not computable from the halting problem are not low for EX-learning,
and finally that oracles that are computable from the halting problem but are not 1-generic
are not low for EX-learning. The last two results can be combined with Theorem \ref{d3lcvKUJi} in order to
show one direction of the characterization for measures:
\begin{equation}\label{maMViSH2Qa}
\parbox{13cm}{if an oracle $A$ is either not computable from the halting problem or
not 1-generic, then there exists a class of computable (Bernoulli) measures which is 
not EX-learnable but which is EX-learnable with oracle $A$.}
\end{equation}
In other words,
%
%
low for EX-learning oracles for measures
are 1-generic and computable from the halting problem.
\begin{coro}\label{ounvKl7AOx}
If an oracle is low for EX-learning for measures, then it is also
low for EX-learning for reals.
\end{coro}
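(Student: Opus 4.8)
The plan is to deduce the corollary by combining the statement \eqref{maMViSH2Qa}, which is established earlier in this section via Theorem \ref{d3lcvKUJi} together with the last two steps of the three-step argument, with the known characterization of the oracles that are low for EX-learning of \emph{reals} due to \citep{Gasarch1989214, SlamanSol:1991}. Recall that this characterization asserts that an oracle is low for EX-learning of reals if and only if it is 1-generic and computable from the halting problem. Hence it suffices to show that every oracle which is low for EX-learning of measures is simultaneously 1-generic and computable from $\emptyset'$, after which the `if' direction of the characterization yields the conclusion. Granting \eqref{maMViSH2Qa}, the remaining deduction is purely propositional.

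First I would take the contrapositive of \eqref{maMViSH2Qa}. That statement says that whenever $A$ fails to be computable from the halting problem, \emph{or} fails to be 1-generic, there is a class of computable (Bernoulli) measures that is EX-learnable with oracle $A$ but not EX-learnable without an oracle; the existence of such a witnessing class means precisely that $A$ is not low for EX-learning of measures. Reading this contrapositively, if $A$ is low for EX-learning of measures then $A$ is of neither failure type, i.e.\ $A\leq_T\emptyset'$ \emph{and} $A$ is 1-generic. With this conjunction in hand, the `if' direction of the \citep{Gasarch1989214, SlamanSol:1991} characterization immediately gives that $A$ is low for EX-learning of reals, which is the desired statement.

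The work is not computational but consists of correctly aligning the two ingredients, and this is where I expect the only genuine subtlety to lie. One must check that the notion of lowness for EX-learning of measures invoked in \eqref{maMViSH2Qa} coincides with the one in the corollary, and, more importantly, that \eqref{maMViSH2Qa} truly covers both failure modes \emph{independently}: each of the hypotheses `$A\not\leq_T\emptyset'$' and `$A$ is not 1-generic' must separately produce its own witnessing class of measures (constructed via Theorem \ref{d3lcvKUJi} from the corresponding witnessing class of reals supplied by the last two steps of the three-step argument). Only then does the negation of the disjunction in \eqref{maMViSH2Qa} deliver the full conjunction `$A\leq_T\emptyset'$ and $A$ is 1-generic' rather than a single conjunct, which is exactly what the characterization for reals requires as input.

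A more direct route would be to relativize Theorem \ref{d3lcvKUJi} to the oracle $A$ and transfer a witnessing class of reals directly to Bernoulli measures; however this runs into the technical point that the Bernoulli parametrization is only effectively orthogonal on reals with a unique binary expansion, so some care with dyadic parameters would be needed. I therefore favour the route through \eqref{maMViSH2Qa} and the known real-valued characterization, where these issues have already been absorbed into the proof of \eqref{maMViSH2Qa}.
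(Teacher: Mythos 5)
Your proposal is correct and follows essentially the same route as the paper: the corollary is obtained by combining the contrapositive of \eqref{maMViSH2Qa} (which yields that a low-for-EX-learning-of-measures oracle is both $\Delta^0_2$ and 1-generic, with each failure mode handled separately via \citep{FORTNOW1994231} and \citep{KUMMER1996214} transferred through Lemma \ref{lBLzXkW7lq}) with the positive direction of the \citep{Gasarch1989214, SlamanSol:1991} characterization for reals. Your remarks on the independence of the two failure modes and on the dyadic-expansion issue being absorbed into the proof of \eqref{maMViSH2Qa} accurately reflect how the paper organizes the argument.
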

We do not know if the converse of Corollary \ref{ounvKl7AOx} holds.

\subsection{Notions of learnability of probability measures}\label{mp7nBBNqQm}

\citep{Bienvenu2014}
say that a learner $\LL$ EX-succeeds on a real $X$ if
$\lim_n \LL(X\restr_n)$ equals an index of a computable 
measure with respect to which $X$ is random.
Similarly, $\LL$ BC-succeeds on $X$ if there exists a measure $\mu$
such that $X$ is $\mu$-random, and for all sufficiently large $n$,
the value of $\LL(X\restr_n)$ is an index of $\mu$.
The results in \citep{Bienvenu2014,Bienvealgindpro}  are of the form `there exists (or not)
a learner which succeeds on all reals that are random with respect to
a computable measure'. Hence \citep{Bienvenu2014,Bienvealgindpro} 
refer to the weak learnability of Definitions \ref{Vr7RSPbOiE}
and \ref{MxTYFrcADu}.

\citep{Bienvenu:2012:VNB:235} introduced and studied
{\em layerwise learnability},
in relation to uniform randomness extraction from biased coins.
This notion is quite different from learnability in the sense of algorithmic learning theory,
but it relates to the `only if' direction of Theorem \ref{d3lcvKUJi}.
Let $\MM$ denote the class of Borel measures on $\twome$.\footnote{Refer to Section
\ref{M7XSECbkKz} for background on the notions used in this discussion.}
A class $\CC\subseteq\MM$ of measures (not necessarily computable) is layerwise learnable
if there is a computable function $F:\twome\times\Nat\to\MM$ which,
given any $\mu\in\CC$ and any $\mu$-random real $X$, if the
$\mu$-randomness deficiency of $X$ is less than $c$ then
$F(X,c)=\mu$. In other words, this notion of learnability of a class $\CC\subseteq\MM$ requires
to be able to compute (as an infinite object) any measure $\mu\in\CC$ from any
$\mu$-random real and a guarantee on the level of $\mu$-randomness of the real. 
Hence
the main difference with the notions in
Definitions  \ref{nAemubaidf} and \ref{1PuUYa3Jz4}
 is that (a) we also learn
incomputable measures and (b) learning does not identify a finite program describing the measure,
but it computes a measure as an oracle Turing machine infinite computation with oracle the
random real. 
As a concrete example of the difference between the two notions, consider the
class of the computable Bernoulli measures which is layerwise learnable 
 \citep{Bienvenu:2012:VNB:235} but is not (weakly) EX-learnable 
or even (weakly) BC-learnable by \citep{Bienvenu2014}.
 
\section{Background}\label{M7XSECbkKz}
We briefly review the background on  the Cantor space $\twome$ and
the space of Borel measures that is directly relevant 
for understanding our results and proofs. We focus on effectivity properties
of these concepts and the notion of algorithmic randomness. This is textbook material
in computable analysis and algorithmic randomness, and we have chosen
a small number of references where the reader can obtain more detailed presentations
that are similar in the way we use the notions here.

\subsection{Representations of Borel measures on the Cantor space}\label{Zs7iCd895A}
We view $\twome$ and the space $\MM$ of
Borel measures on $\twome$ as computable metric spaces.\footnote{All of the notions and facts discussed in
this section are standard in computable analysis and are presented in more detail in
\citep{Bienvenu:2012:VNB:235,Bienvealgindpro}. More general related facts, such that
the fact that for any computable metric space $\CC$ 
the set of probability measures over $\CC$ is itself a computable metric space,
can be found in \citep{Gacs:2005:UTA}.}
The distance between two
reals is $2^{-n}$ where $n$ is the first digit where they differ, and the basic open sets
are $\dbra{\sigma}=:\{X\in\twome\ |\ \sigma\preceq X\}$, $\sigma\in \twomel$,
where $\preceq$ denotes the prefix relation. If $V\subseteq\twomel$ then
$\dbra{V}:=\cup_{\sigma\in V} \dbra{\sigma}$.
The distance between $\mu,\nu\in\MM$ is given by
\[
d(\mu,\nu)=\sum_n 2^{-n}\cdot\Big(\max_{\sigma\in 2^n}  |\mu(\sigma)-\nu(\sigma)|\Big)
\]
The basic open sets of $\MM$ are the balls of the form
\[
\big[(\sigma_0,I_0),\dots, (\sigma_n,I_n)\big]=
\big\{\mu\in\MM\ |\ \forall i\leq n,\ \mu(\sigma_i)\in I_i\big\}
\]
where $\sigma_i$ are binary strings (which we identify with the open balls 
$\dbra{\sigma}$ of
$\twome$) and $I_i$ are the {\em basic open intervals in} $[0,1]$.\footnote{These are the
intervals $(q,p), [0,q), (p,1]$ for all dyadic rationals $p,q\in (0,1)$.}
Define the size of a basic open set $C$ of $\MM$ by
\[
\big| C \big|= \sup \{d(\mu,\nu)\ |\ \mu,\nu\in C\},
\hspace{0.2cm}\textrm{for $C\in\MMast$}
\]
and note that this is a computable function.
By the Caratheodory theorem, 
each $\mu\in\MM$ is uniquely determined by its values on the basic open sets
of $\twome$, namely the values $\mu(\sigma):=\mu(\dbra{\sigma})$, $\sigma\in\twomel$.
Also,  each $\mu\in\MM$ is uniquely determined by the basic open sets that contain it, 
and the same is true for $\twome$.
A subset of $\MM$ is effectively open if it is the union of a computably enumerable
set of basic open sets.

We represent measures in $\MM$ as the functions 
$\mu: \twomel\to [0,1]$ such that $\mu(\emptyset)=1$ (here $\emptyset$ is the empty string)
and
$\mu(\sigma)=\mu(\sigma\ast 0)+\mu(\sigma\ast 1)$ for each $\sigma\in\twomel$.
We often identify a measure with its representation.
A measure $\mu$ is computable if its representation is computable as a
real function.
There are two equivalent ways to define what an index (or description)
of a computable measure is. One is to define it as a computable approximation
to it with uniform modulus of convergence. For example,
we could say that  
a {\em partial computable measure} is
a \ce (abbreviation for `computably enumerable') 
set $W$ of basic open sets $(\sigma, I)$ of $\MM$, where $\sigma\in\twomel$, 
$I$ is a basic open interval
of $[0,1]$;\footnote{If one wishes to ensure that in case of convergence
the property $\mu(\sigma)=\mu(\sigma\ast 0)+\mu(\sigma\ast 1)$ holds,
we could also require that
if $(\sigma, I),(\sigma\ast 0, J_0),(\sigma\ast 1, J_1)\in W$ then 
$I\cap [\inf J_0 +\inf J_1,\sup J_0 +\sup J_1]\neq\emptyset$.}
In this case we can have a uniform enumeration $(\mu_e)$ of all partial computable measures,
which could contain non-convergent approximations. Then $\mu_e$, represented by the \ce set 
$W_e$,  is total and equal to 
some measure $\mu$ if
$\mu\in [(\sigma, I)]$ for all $\sigma,I$ with $(\sigma, I)\in W_e$, and for each $\sigma$ we have
$\inf\{|I|\ |\ (\sigma,I)\in W_e\}=0$.
Alternatively, one could consider the fact that for every computable measure $\mu$
there exists a computable measure $\nu$ which takes dyadic values on each string $\sigma$,
and such that $\mu=\Theta(\nu)$ (\ie the two measures are the same up to a multiplicative
constant -- see 
\citep{JLweak}).
Moreover, from $\mu$ one can effectively define $\nu$, and the property $\mu=\Theta(\nu)$
implies that the $\mu$-random reals are the same as the $\nu$-random reals.
This means that we may restrict our considerations to 
the computable measures with dyadic rational values on every string, without loss of generality.
Then we can simply let $(\mu_e)$ be an effective list of all partial computable
functions from $\twomel$ to the dyadic rationals such that 
$\mu(\sigma)=\mu(\sigma\ast 0)+\mu(\sigma\ast 1)$ for each $\sigma$ such that
the values $\mu(\sigma),\mu(\sigma\ast 0),\mu(\sigma\ast 1)$ are defined.

The two formulations are effectively equivalent, in the sense that
from one we can effectively obtain the other, so we do not explicitly distinguish them.
In any case, an index of a computable measure $\mu$ is a number $e$ such that
$\mu_e$ is total and equals $\mu$. An important exception to this equivalence is when
we consider subclasses of computable measures, such as the computable Bernoulli measures
which feature in Section \ref{P6hQ2fd9BA}. In this case we have to use the first definition of
$(\mu_e)$ above, since it is no longer true that every computable Bernoulli measure can be
replaced with a computable Bernoulli measure with dyadic values which has the same 
random reals.

\subsection{Computable functions and metric spaces}
There is a well-established notion of a computable function $f$ 
between computable metric spaces
from computable analysis, \eg see \citep{Bienvenu:2012:VNB:235,WEIHRAUCH1993191}.
The essence of this notion is effective continuity, \ie 
that for each $x$ and a prescribed error bound $\epsilon$ for
an  approximation to $f(x)$, one can compute a neighborhood radius around 
$x$ such that all of the $y$ in the neighborhood are mapped within distance $\epsilon$ from
$f(x)$. Here we only need the notion of a computable function $f:\twome\to\MM$, which
can be seen to be equivalent to the following (due to the compactness of $\twome$).
Let $\MMast$ denote the collection of the basic open sets of $\MM$.
\begin{defi}\label{9hsw3pE4Wp}
A function $f:\twome\to\MM$ is computable if there exists a computable
function $f^{\ast}:\twomel\to\MMast$ which is monotone in the sense that
$\sigma\preceq\tau$ implies 
$f^{\ast}(\sigma)\subseteq f^{\ast}(\tau)$, and
such that for all $Z\in\twome$ we have $f(Z)\in f^{\ast}(Z\restr_n)$ for all $n$,
and $\lim_s |f^{\ast}(Z\restr_s)|=0$.
\end{defi}
More generally, a computable metric space is a tuple $(\XX, d_x, (q_i))$
such that $(\XX,d_x)$ is a complete separable metric space,
$(q_i)$ is a countable dense subset of $\XX$
and the function $(i,j)\mapsto d_x(q_i,q_j)$ is computable.
A function $f:\XX\to\YY$ between two computable metric spaces
$(\XX, d_x, (q^x_i))$, $(\YY, d_y, (q^y_i))$
is computable if there exists computable function
$g$ such that
for every  $n\in\Nat$
and every $w,z\in\XX$ such that $d_x(w,z)<2^{-g(n)}$ we have
$d_y(f(w),f(z))< 2^{-n}$; equivalently, if
for all $n,i,j\in\Nat$, such that
$d_x(q^x_i,q^x_j)<2^{-g(n)}$ we have
$d_y(f(q^x_i),f(q^x_j))< 2^{-n}$.
In this way, as it is illustrated in
Definition \ref{9hsw3pE4Wp},
computable functions between $\twome,\Nat,\MM$ and their
products can be thought of as induced by  monotone computable functions
between the corresponding classes of basic open sets, 
such that the sizes of the images decrease
uniformly as a function of the size of the arguments.

\subsection{Algorithmic randomness with respect to arbitrary measures}\label{TTprzlKboB}
There is a robust notion of algorithmic randomness with respect to an arbitrary
measure $\mu$ on $\twome$, which was manifested in approaches by 
\citep{levinuniftests,leviniandc/Levin84} and 
\citep{Gacs:2005:UTA}
in terms of uniform tests, and in \citep{ReiSlaMRR} 
in terms of representations of measures,
all of which were shown to be equivalent by  \citep{DMncompmea}.
In this paper we will mainly use the specific case when the measure is computable,
which is part of the classic definition of  \citep{MR0223179}.
Given a computable measure $\mu$, a \ml $\mu$-test is a 
uniformly \ce sequence $(U_i)$ of sets of strings (viewed as the sets
of reals with prefixes the strings in the sets) such that $\mu(U_i)<2^{-i}$ for each $i$.
A real $Z$ is $\mu$-random if it is not contained in  the intersection of any \ml $\mu$-test.
By \citep{MR0223179} there exists a universal \ml $\mu$-test (uniformly in $\mu$)
\ie a  \ml $\mu$-test $(U_i)$ with the property that the set of $\mu$-random reals 
is $\twome -\cap_i \dbra{U_i}$ (where $\dbra{U_i}$ denotes the set of reals with
prefixes in $U_i$).
Equivalently, if $K$ is the \pf Kolmogorov complexity function,
$Z$ is $\mu$-random if there exists $c\in\Nat$ 
such that $\forall n\ K(Z\restr_n)> -\log \mu(Z\restr_n)-c$.
Occasionally it is useful to refer to the {\em randomness deficiency} of a real,
which can be defined in many equivalent ways.\footnote{Equivalent\label{v9SlhIOuiN}
in the sense that from an upper bound of one notion with respect to a real, 
we can effectively obtain an upper bound on another notion with respect to the same real.}
For example, we could define $\mu$-deficiency 
to be the least $i$ such that $Z\not\in\dbra{U_i}$
where $(U_i)$ is the universal \ml $\mu$-test, or 
$\sup_n (\ceil{-\log \mu(Z\restr_n)}-K(Z\restr_n))$.
Clearly $Z$ is $\mu$-random if and only if it has finite $\mu$-deficiency.
Randomness with respect to arbitrary measures only plays a role in Section \ref{IvylEyj89U}.
We define it in terms of randomness deficiency, following \citep{Bienvealgindpro}.
We define the (uniform) {\em randomness deficiency function}
to be the largest, up to an additive constant, 
function
$\dd:\twome\times\MM\to\Nat\cup\{\infty\}$ 
such that\footnote{We can get a precise definition of $\dd$ by starting
with a universal enumeration $W_e(k)$ all uniform c.e. sequences of sets $W(k)$, 
where each $W(k)$ is a set of pairs $(\sigma, I)$ of basic open sets of
$\twome,\MM$ respectively (viewed as basic open set of the product space
$\twome\times\MM$) 
with the property that for each $\mu \in I$ 
$\mu(\dbra{\{\sigma\ |\ (\sigma, I) \in W(k)\}})<2^{-k}$.
Then define $\dd(X,\mu)= \sum_e 2^{-e} \cdot w_e(X,\mu)$
where $w_e(X,\mu)$ is the maximum $k$ such that $(X,\mu)$ is 
in the open set $W_e(k)$.}
\begin{itemize}
\item the sets $\dd^{-1}((k,\infty))$ are effectively open uniformly in $k$;
\item $\mu(\{X\ |\ \dd(X,\mu)>k\})<2^{-k}$,
$\forall X\in\twome$, $\mu\in\MM$, $k\in\Nat$.
\end{itemize}
Given any $\mu\in\MM$ and $Z\in\twome$, the $\mu$-deficiency of $Z$ is
$\dd(Z,\mu)$ and $Z$
is $\mu$-random if it has finite $\mu$-deficiency.
This definition is based on the uniform tests approach as mentioned before,
and is equivalent to \ml randomness for computable measures. Moreover,
the deficiency notions are equivalent in the sense of footnote \ref{v9SlhIOuiN}.
The reader may find additional background on algorithmic randomness in
the monographs \citep{Li.Vitanyi:93} and \citep{rodenisbook}.

\section{Proof of Theorem \ref{d3lcvKUJi} and Theorem \ref{WVmrKGDjbs}}\label{XS3k8tMXTU}
We start with Theorem \ref{d3lcvKUJi}.
Let $\DD\subseteq \twome$ be an effectively closed set 
and let $\DDast\subseteq\DD$ contain only computable reals.
Also let $f:\twome\to\MM$ 
be a computable function
such that  for any $X\neq Y$ in $\DD$
the measures $f(X), f(Y)$ are  effectively orthogonal. 
The easiest direction of 
 Theorem \ref{d3lcvKUJi} is that if
$\DDast$ is (EX or BC) learnable then $f(\DDast)$ is (EX or BC, respectively) learnable,
and is proved in Section \ref{IvylEyj89U}. We 
stress that the effective orthogonality property of $f$, and hence the fact that
it is injective, is used in a crucial way  in the argument of 
Section \ref{IvylEyj89U}.
Sections \ref{g2uXHFLpP} and \ref{EZtHGpioBE}
prove the `if' direction of Theorem \ref{d3lcvKUJi} for EX and BC learnability
respectively, and are the more involved part of this paper.
In Section \ref{In6hhYHW} we prove  Theorem \ref{WVmrKGDjbs}.

\subsection{From learning reals to learning measures}\label{IvylEyj89U}
We show the `only if' direction of Theorem \ref{d3lcvKUJi},
first for EX learning and then for BC learning.
Let $f,\DD,\DDast$ be as in the statement of Theorem \ref{d3lcvKUJi}.
Since $f$ maps distinct reals in $\DD$ to effectively orthogonal measures, 
given $X\in\twome$ there exists at most one
$\mu\in f(\DD)$ such that $X$ is $\mu$-random. By the properties of $f$, there is also at most
one $Z\in \DD$ such that $X$ is $f(Z)$-random.
It follows from the definition of deficiency in Section \ref{TTprzlKboB}, that
for each $X\in\twome$, $c\in\Nat$,  the class of 
$Z\in \DD$ such that $X$ is $f(Z)$-random with deficiency $\leq c$ is a 
$\Pi^0_1(X)$ class $P(X,c)$ (uniformly in $X,c$).
By the effective orthogonality of 
the image of $\DD$ under $f$, the latter class either contains a unique real, or is empty.
Moreover, the latter case occurs if and only if
there is no $\mu\in f(\DD)$ with respect to which $X$ is $\mu$-random
with deficiency $\leq c$.
Now note that given a
$\Pi^0_1(X)$ class $P \subseteq\twome$, by compactness the emptiness of $P$ is
a $\Sigma^0_1(X)$ event, and if $P$ contains a unique path, this path is uniformly computable
from $X$ and an index of $P$.

It follows that there exists a computable function $h: \twomel\to\twomel$ 
such that
for all $X$ which is $f(Z)$-random for some $Z\in \DD$, 
\begin{itemize}
\item $\lim_s |h(X\restr_s)|=\infty$;
\item there exists $n_0$ such that for all $m>n>n_0$ we have
$h(X\restr_n)\preceq h(X\restr_m)$;
\item as $n\to\infty$ the prefixes  $h(X\restr_n)$ converge to 
the unique real $Z\in \DD$ such that $X$ is $f(Z)$-random.
\end{itemize}
Indeed, on the initial segments of $X$, the function $h$ will start
generating the classes $P(X,c)$ as we described above, starting with $c=0$
and increasing $c$ by 1 each time that the class at hand becomes empty.
While this process is fixed on some value of $c$, it starts producing the initial segments
of the unique path of $P(X,c)$ (if there are more than one path, this process will 
stop producing longer and longer strings, reaching a finite partial limit). In the special case
that $X$ is $f(Z)$-random for some $Z\in\DD$, such a real $Z\in \DD$ is unique,
and the process will reach a limit value of $c$, at which point it will produce
a monotone sequence of longer and longer prefixes of $Z$. \footnote{Alternatively, in order to
obtain $h$, one can make use of a result from 
\citep{Bienvenu:2012:VNB:235}.
Since
$f$ is computable, $\twome$ is compact and $\DD$ is effectively closed, the image $f(\DDast)$ is compact and effectively closed,
and the set of indices
of computably enumerable sets of basic open sets of $\MM$ whose union
contains $f(\DDast)$ is itself computably enumerable. In the terminology of
\citep{Bienvenu:2012:VNB:235}, the image 
$f(\DDast)$ is {\em effectively compact}.
Bienvenu and Monin \citep{Bienvenu:2012:VNB:235}
showed that if a class $\CC$ of effectively orthogonal measures is effectively
compact then
there exists a computable function $F:\twome\times\Nat\to\MM$ such that
$\forall \mu\in\CC\ \forall X\in\twome\ \forall c\in\Nat\ \ u(X,\mu)<c\Rightarrow F(X,c)=\mu$
where $u(X,\mu)$ is the $\mu$-deficiency of $X$.
One can derive the existence of $h$ from this result.}

Note that since $f:\twome\to\MM$ is computable, there exists
a computable $g:\Nat\to\Nat$ such that for each $e$, if $e$ is
an index of a computable $Z\in\twome$, then $g(e)$ is
an index of the computable measure $f(Z)$.

We are ready to define an EX-learner $\VV$  
for $f(\DDast)$, given an EX-learner $\LL$ for $\DDast$ and the functions $h,g$ that
we defined above.
For each $\sigma$ we let $\VV(\sigma)=g(\LL(h(\sigma)))$.
It remains to verify that for each $X$ which is $\mu$-random for some computable 
$\mu\in f(\DDast)$, the limit $\lim_s \VV(X\restr_s)$ exists and equals an index for 
(the unique such) $\mu$.
By the choice of $X$ and $h$ we have that there exists some $s_0$ such that for
all $s>s_0$, the string
$h(X\restr_s)$ is an initial segment of the unique $Z\in\DDast$ such that $f(Z)=\mu$;
moreover, $\lim_s |h(X\restr_s)|=\infty$ and since $\mu$ is computable and $\DD$ is effectively
closed, it follows that $Z$ is computable.
Hence, since $\LL$ learns all  reals in $\DDast$,
we get that $\lim_s \LL(h(X\restr_s))$ exists and is an index of $Z$. Then by the properties
of $g$ we get that $g(\lim_s \LL(h(X\restr_s)))=\lim_s g(\LL(h(X\restr_s)))$ is an index for $\mu$.
Hence $\lim_s \VV(X\restr_s)$ is an index of the unique computable $\mu\in f(\DDast)$
with respect to which $X$ is random, which concludes the proof.

Finally we can verify that the same argument shows that if $\DDast$ is BC-learnable, then
$f(\DDast)$ is BC-learnable. The definitions of $h,g$ remain the same. The only change is that
now we assume that $\LL$ is a BC-learner for $\DDast$.
We define the BC-learner $\VV$ for $f(\DDast)$ in the same way: 
$\VV(\sigma)=g(\LL(h(\sigma)))$.
As before, given $X$ such that there exists (a unique) $Z\in\DDast$
such that $X$ is $f(Z)$-random, we get that 
there exists some $s_0$ such that for
all $s>s_0$, the string
$h(X\restr_s)$ is an initial segment of the unique computable $Z\in\DDast$ such that $f(Z)=\mu$,
and moreover, $\lim_s |h(X\restr_s)|=\infty$.
Since $\LL$ is a BC-learner for $\DDast$, 
there exists some $s_1$ such that for all $s>s_1$
the integer $\LL(h(X\restr_s))$ is an index for the computable real $Z$.
Then by the properties of $g$ we get that for all
$s>s_1$, the integer $g(\LL(h(X\restr_s)))$ is an index for the computable measure $f(Z)$.
Since $X$ is $f(Z)$-random, this concludes the proof of the BC clause of
the `only if' direction of  Theorem \ref{d3lcvKUJi}.

\subsection{From learning measures to learning reals: the EX case}\label{g2uXHFLpP}
We show the `if' direction of the EX case of Theorem \ref{d3lcvKUJi}.
Let $f,\DD,\DDast$ be as given in the theorem and suppose that
$f(\DDast)$ is
EX-learnable.
This means that there exists a computable learner $\VV$ such that
for every $Z\in \DDast$ and every $f(Z)$-random $X$,
the limit $\lim_s \VV(X\restr_s)$ exists and is an index
of $f(Z)$. We are going to construct a learner $\LL$ for 
$\DDast$ so that
for each $Z\in \DDast$ the limit 
$\lim_s \LL(Z\restr_s)$ exists and is an index for $Z$.
Since $\DD$ is effectively closed and $f$ is computable and injective on $\DD$, 
by the compactness of $\twome$, 
\begin{equation}\label{xD7HzXSEpd}
\parbox{13cm}{there exists a computable $g:\Nat\to\Nat$ such that for each $e$,
if $e$ is an index of a computable $\mu\in f(\DD)$, the image $g(e)$ is an index
of the unique and computable $Z\in\DD$ such that $f(Z)=\mu$.}
\end{equation}
Hence it suffices to
\begin{equation}\label{oPZCcbP3K}
\parbox{13cm}{construct a computable function $\LLast:\twomel\to\Nat$
with the property that 
for each  $Z\in \DDast$ the limit 
$\lim_s \LLast(Z\restr_s)$ exists and is an index for $f(Z)$}
\end{equation}
because then the function $\LL(\sigma)=g(\LLast(\sigma))$ will be a computable
learner for $\DDast$.

Since $f:\twome\to\MM$ is computable, there exists a computable $f^{\ast}:\twomel\to\MMast$
(where $\MMast$ is the set of basic open sets of $\MM$) and a computable
increasing $h:\Nat\to\Nat$ such that:
\begin{itemize}
\item $\sigma\preceq\tau$ implies $f^{\ast}(\sigma) \subseteq f^{\ast}(\tau)$;
\item for all $Z\in\twome$, $\lim_s f^{\ast}(Z\restr_s)=f(Z)$;
\item for all $n$ and all $\sigma\in 2^{h(n)}$ the size of $f^{\ast}(\sigma)$ is at most $2^{-3n}$.
\end{itemize}
Note that by the properties of $f^{\ast}$ we have
\begin{equation}\label{S72AlHj9Wf}
\parbox{13cm}{for each $Z\in\twome$, each $n\in\Nat$ 
and any measures $\mu,\nu\in f^{\ast}(Z\restr_{h(n)})$ we have
$\sum_{\sigma\in 2^n} |\mu(\sigma)-\nu(\sigma)| < 2^{-n}$.}
\end{equation}
Below we will also use the fact that
\begin{equation}\label{LhxifOccwG}
\parbox{13cm}{there is a computable function that takes as input
any basic open interval $I$ of $\MM$ and returns (an index of) a computable
measure (say, as a measure representation) $\mu\in I$.}
\end{equation}

\textbf{Proof idea.}
Given $Z\in\DDast$ we have an approximation to the measure $\mu^{\ast}=f(Z)$.
Given $\mu^{\ast}$ and $\VV$ we get a majority vote on each of the levels
of the full binary tree, where each string $\sigma$ votes for the index $\VV(\sigma)$
and its vote has weight $\mu^{\ast}(\sigma)$. In search for the index of 
$Z\in\DDast$ we approximate the weights of the various indices
as described above, and aim to chose  an index with a positive weight.
If $\VV$ EX-learns $\mu^{\ast}$, it follows that such an index will indeed be
an index of $\mu^{\ast}$. One obvious way to look for such an index is at each
stage to choose the index whose current approximated weight is the largest.
This approach has the danger that there may be two different indices with the same weight,
in which case it is possible that the said approximation $\lim_n \LL^{\ast}(X\restr_n)$ does not
converge. We deal with this minor issue by requiring a sufficient difference on the
current weights for a change of guess.

\textbf{Construction of $\LLast$.}
We let $\LLast$ map the empty string to index 0 and for every other string $\sigma$ 
we define $\LLast(\sigma)$ as follows. If $\sigma\not\in \{2^{h(n)} \ |\ n\in\Nat\}$
 then let $\LLast(\sigma)=\LLast(\tau)$
where $\tau$ is the longest prefix of $\sigma$ in $\{2^{h(n)} \ |\ n\in\Nat\}$.
So it remains to define $\LLast$ in steps, where at step $n$
we define $\LLast$ on all strings $\sigma\in 2^{h(n)}$.
Since $f^{\ast}(\sigma)$ is basic open interval in $\MM$, we may use
\eqref{LhxifOccwG} in order to get a
computable function $\sigma\to\mu_{\sigma}$
from strings to computable measures, such that
for each $\sigma$ the measure $\mu_{\sigma}$ belongs to
$f^{\ast}(\sigma)$.\footnote{The reader should not confuse this notation with
the notation $(\mu_e)$ that we used for the universal list of all computable measures.}

Given $n$ and $\sigma\in 2^{h(n)}$, 
for each $e$ define 
\[
\wgt{e}=\mu_{\sigma}(\{\tau\in 2^{n}\ |\ \VV(\tau)=e\}).
\]
Let $e^{\ast}$ be the least number with the maximum $\wgt{e}$.\footnote{Note that there are 
at most $2^n$ many $e$ with $\wgt{e}\neq 0$ so this maximum exists. Moreover, we can
compute the set of these numbers $e$, the maximum and $e^{\ast}$, 
by computing $\mu_{\sigma}(\tau)$ and $\VV(\tau)$
for each $\tau\in 2^n$.}
Let $\sigma^-$ denote the first $|\sigma|-1$ bits of  $\sigma$.
If $\wgt{e^{\ast}}>3\cdot \wgt{\LLast(\sigma^-)}$,  
let $\LLast(\sigma)=e^{\ast}$; otherwise let $\LLast(\sigma)=\LLast(\sigma^-)$.

\textbf{Properties of $\LLast$.}
It remains to show \eqref{oPZCcbP3K}, so let $Z\in \DDast$.
First we show the claimed convergence and then that the limit is an index for $f(Z)$.
Let $\mu^{\ast}:=f(Z)$ and for each $e$  define
\begin{eqnarray*}
w_e&=&\mu^{\ast} \big(\{X\ |\ \lim_s \VV(X\restr_s)=e\}\big)\\
w_e[n]&=&\mu^{\ast} \big(\{\sigma\in 2^{n}\ |\ \VV(\sigma)=e\}\big)
\end{eqnarray*}
%
Since $Z\in \DDast$ it follows that $\VV$ learns $\mu^{\ast}$.
Hence
the $\mu^{\ast}$-measure of all the reals $X$ such that 
$ \lim_s \VV(X\restr_s)$ exists and equals an index of
a measure with respect to which $X$ is random, is 1.
If we take into account that
$f(\DD)$ is effectively orthogonal, it follows that 
\begin{equation}\label{gjDmWibB}
\parbox{13cm}{the
$\mu^{\ast}$-measure of all the reals $X$ such that 
$ \lim_s \VV(X\restr_s)$ exists and equals an index of $\mu^{\ast}$ is 1.
Hence there exists an index $t$ of $\mu^{\ast}=f(Z)$
such that $w_t>0$, and moreover,  each $e$ with
$w_e>0$ is an index of $\mu^{\ast}$.}
\end{equation}

\begin{lem}\label{mfqpAdyGd3}
For each $e$, $\lim_n w_e[n]=w_e$.
\end{lem}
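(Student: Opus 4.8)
The plan is to recognise both quantities as $\mu^{\ast}$-measures of honest subsets of $\twome$, and then to deduce the convergence from an almost-everywhere convergence of indicator functions. First I would record that, since $\mu^{\ast}(\sigma)=\mu^{\ast}(\dbra{\sigma})$ and the cylinders $\dbra{\sigma}$ for $\sigma\in 2^n$ are pairwise disjoint and cover $\twome$, the quantity $w_e[n]$ is exactly the $\mu^{\ast}$-measure of the clopen set $A^e_n:=\{X\in\twome\ |\ \VV(X\restr_n)=e\}$. Likewise $w_e=\mu^{\ast}(B^e)$ where $B^e:=\{X\ |\ \lim_s\VV(X\restr_s)=e\}$. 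The elementary observation that ties them together is that $B^e=\liminf_n A^e_n$: a real $X$ lies in $B^e$ exactly when $\VV(X\restr_n)=e$ for all sufficiently large $n$, i.e. when $X$ eventually belongs to every $A^e_n$.

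Next I would bring in the learnability hypothesis, which is the only nontrivial ingredient. Because $Z\in\DDast$, the learner $\VV$ EX-learns $\mu^{\ast}=f(Z)$, so for every $\mu^{\ast}$-random $X$ the limit $\lim_s\VV(X\restr_s)$ exists; as the non-random reals form a $\mu^{\ast}$-null set (this is guaranteed by \eqref{gjDmWibB}), the set $D:=\{X\ |\ \lim_s\VV(X\restr_s)\ \text{exists}\}$ satisfies $\mu^{\ast}(D)=1$. On $D$ the integer $\VV(X\restr_n)$ is eventually constant, so the indicator $\mathbf{1}_{A^e_n}(X)$ stabilises to $\mathbf{1}_{B^e}(X)$; hence $\mathbf{1}_{A^e_n}\to\mathbf{1}_{B^e}$ pointwise $\mu^{\ast}$-almost everywhere.

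Finally I would invoke the dominated convergence theorem: the indicators are uniformly bounded by the constant $1$, which is $\mu^{\ast}$-integrable since $\mu^{\ast}$ is a probability measure, so
\[
\lim_n w_e[n]=\lim_n\int \mathbf{1}_{A^e_n}\,d\mu^{\ast}=\int\mathbf{1}_{B^e}\,d\mu^{\ast}=w_e,
\]
as required. The one place where care is needed — and the step I expect to carry the real weight — is the appeal to almost-everywhere convergence of the indicators. Fatou's lemma alone only yields $w_e=\mu^{\ast}(\liminf_n A^e_n)\le\liminf_n w_e[n]$; to upgrade this to genuine convergence one must know that the sets $A^e_n$ do not oscillate on a set of positive measure, and this is precisely secured by the existence of $\lim_s\VV(X\restr_s)$ for $\mu^{\ast}$-almost every $X$. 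Thus the EX-learnability of $f(\DDast)$ is used in an essential way, not merely to identify the eventual value but to guarantee the convergence itself.
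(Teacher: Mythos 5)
Your proof is correct and follows essentially the same route as the paper's: both arguments reduce to the fact that EX-learnability of $\mu^{\ast}$ together with the full $\mu^{\ast}$-measure of the random reals forces $\lim_s\VV(X\restr_s)$ to exist $\mu^{\ast}$-almost everywhere, and then convert this almost-everywhere stabilisation into convergence of the measures $w_e[n]\to w_e$. The only difference is presentational: where you invoke the dominated convergence theorem for the indicators, the paper unwinds that step by hand, bounding $\limsup_n w_e[n]$ and $\liminf_n w_e[n]$ separately via the shrinking open sets $Q_n$ of reals on which $\VV$ still changes value and the growing closed sets $P_e[n]$ on which it has settled to $e$.
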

\begin{proof}
Since $\VV$ learns $\mu^{\ast}$,
the $\mu^{\ast}$-measure of the reals on which $\VV$ reaches a limit is 1.
For each $n$ let $Q_n$ be the open set of reals on which $\VV$ changes value
after $n$ bits. Then $Q_{n+1}\subseteq Q_n$ and $\lim_n \mu^{\ast} (Q_n)=
\mu^{\ast} (\cap_n Q_n)=0$.
Let $P_e[n]$ be the closed set for reals $X$ such that $\VV(X\restr_i)=e$
for all $i\geq n$. Then $P_e[n]\subseteq P_e[n+1]$ for all $n$ and
$w_e$ is the $\mu^{\ast}$-measure of $\cup_n P_e[n]$.
Hence $w_e=\lim_n \mu^{\ast}(P_e[n])$.

Given $n_0$, for each $n\geq n_0$ we have
$w_e[n]\leq \mu^{\ast}(P_e[n_0]) + \mu^{\ast}(Q_{n_0})$.
This shows that $\limsup_n w_e[n]\leq \limsup_n \mu^{\ast}(P_e[n])=w_e$.
On the other hand $P_e[n_0]\subseteq\dbra{\{\sigma\in 2^{n}\ |\ \VV(\sigma)=e)\}}$
for all $n\geq n_0$. So $w_e=\lim_n ( \mu^{\ast}(P_e[n])) \leq \liminf_n w_e[n]$.
It follows that $\lim_n w_e[n]=w_e$.
\end{proof}

Now, given $Z$ consider
the sequence of computable measures $\mu_{Z\restr_{h(n)}}\in f^{\ast}(Z\restr_{h(n)})$
that are defined by the function $\sigma\mapsto\mu_{\sigma}$ applied on $Z$,
and let 
\[
w^{\ast}_e[n]=\mu_{Z\restr_{h(n)}} \big(\{\sigma\in 2^{n}\ |\ \VV(\sigma)=e\}\big).
\]
From \eqref{S72AlHj9Wf} we get that for each $n,e$,
\begin{equation}
|w_e[n]-w^{\ast}_e[n]|< 2^{-n}.
\end{equation}
In particular, by Lemma \ref{mfqpAdyGd3},
$w_e=\lim_n w_e[n]=\lim_n w^{\ast}_e[n]$.
Let $m$ be some index such that $w_m = \max_e w_e$.
\begin{lem}\label{MsTBC1NH4a}
There exists $n_0$ such that for all $n\ge n_0$ and all $e$, $|w_e - w^{\ast}_e[n]|< w_m/5$.
\end{lem}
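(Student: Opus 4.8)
The plan is to reduce the claim to the index-by-index convergence of Lemma \ref{mfqpAdyGd3} together with the approximation bound $|w_e[n]-w^{\ast}_e[n]|<2^{-n}$ coming from \eqref{S72AlHj9Wf}. By the triangle inequality $|w_e-w^{\ast}_e[n]|\le |w_e-w_e[n]|+2^{-n}$, so it suffices to control $|w_e-w_e[n]|$ for large $n$. The only real difficulty is that Lemma \ref{mfqpAdyGd3} gives $\lim_n w_e[n]=w_e$ separately for each $e$, whereas the present statement demands a single threshold $n_0$ that works for all indices $e$ simultaneously; the heart of the argument is upgrading this pointwise convergence to a uniform one.

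First I would record two normalisation facts. The sets $\{X:\lim_s \VV(X\restr_s)=e\}$ are pairwise disjoint and, by the proof of Lemma \ref{mfqpAdyGd3}, their union carries $\mu^{\ast}$-measure $1$, so $\sum_e w_e=1$; similarly the sets $\{\sigma\in 2^n:\VV(\sigma)=e\}$ partition $2^n$, whence $\sum_e w_e[n]=\mu^{\ast}(\twome)=1$ for every $n$. Because $\sum_e w_e=1$ is a convergent series of nonnegative terms, I can fix a finite set $F$ of indices with $\sum_{e\notin F} w_e<\delta$, where $\delta$ is a small fraction of $w_m$ to be chosen below; note $w_m>0$ by \eqref{gjDmWibB}, so this fraction is meaningful.

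Second, the finitely many indices in $F$ are handled directly: Lemma \ref{mfqpAdyGd3} gives $\lim_n w_e[n]=w_e$, so taking the maximum of the finitely many individual thresholds produces an $n_1$ with $|w_e-w^{\ast}_e[n]|<w_m/5$ for all $n\ge n_1$ and all $e\in F$ (the extra $2^{-n}$ is absorbed by shrinking the tolerances). For the indices outside $F$ I would bound their approximate mass collectively: since $F$ is finite, $\sum_{e\in F} w_e[n]\to\sum_{e\in F} w_e>1-\delta$, so for large $n$ we get $\sum_{e\notin F} w_e[n]<2\delta$, and hence $w_e[n]<2\delta$ for each single $e\notin F$. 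Combining this with $w_e<\delta$ and $w^{\ast}_e[n]<w_e[n]+2^{-n}$, and using $|a-b|\le\max(a,b)$ for nonnegative $a,b$, I obtain $|w_e-w^{\ast}_e[n]|\le\max(w_e,w^{\ast}_e[n])<2\delta+2^{-n}$.

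Finally I would choose $\delta$ so small that $2\delta<w_m/10$ and take $n$ large enough that $2^{-n}<w_m/10$; then the indices outside $F$ also satisfy $|w_e-w^{\ast}_e[n]|<w_m/5$, and letting $n_0$ be the maximum of $n_1$ and these thresholds finishes the proof. As indicated, the crux is that the finiteness of the total mass $\sum_e w_e=1$ is precisely what converts the per-index limits of Lemma \ref{mfqpAdyGd3} into the uniform estimate required here.
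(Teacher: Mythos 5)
Your proof is correct and follows essentially the same route as the paper: both arguments exploit $\sum_e w_e=1$ to isolate a finite set of indices carrying all but a small fraction of the mass, handle those finitely many indices by the pointwise convergence of Lemma \ref{mfqpAdyGd3} (together with the $2^{-n}$ bound from \eqref{S72AlHj9Wf}), and control the tail indices by noting that both $w_e$ and $w^{\ast}_e[n]$ are individually small there because the total tail mass is small. The only cosmetic difference is that the paper works with an initial segment $\{e<e_0\}$ and bounds $\sum_{e\ge e_0}w^{\ast}_e[n]$ directly, while you pass through $w_e[n]$ first; the constants differ slightly but the estimates close in the same way.
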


\begin{proof}
By~\eqref{gjDmWibB} we have $\sum_e w_e =1$ and $0 < w_m \le 1$.
Then there exists $e_0$ such that $\sum_{e < e_0} w_e > 1 - w_m/20$.
Since we also have for all $e$, $\lim_n w^{\ast}_e[n] = w_e$, 
there exists $n_0$ such that for all $n\ge n_0$, $\sum_{e < e_0} |w_e - w^{\ast}_e[n]|< w_m/20$.
Then for $e < e_0$, it is clear that for all $n\ge n_0$, $|w_e - w^{\ast}_e[n]|< w_m/5$.
On the other hand, we have $\sum_{e \ge e_0} w_e = 1- \sum_{e < e_0} w_e < w_m/20$.
Moreover, for all $n\ge n_0$, 
\[
\sum_{e \ge e_0} w^{\ast}_e[n] = 1- \sum_{e < e_0} w^{\ast}_e[n] \le 
1 - \Big(\sum_{e < e_0} w_e  - w_m/20\Big) <  w_m/10.
\]
So for all $e \ge e_0$, $0 \le w_e< w_m/20$ and $0 \le w^{\ast}_e[n] < w_m/10$, and thus, $|w_e - w^{\ast}_e[n]|< w_m/5$.
\end{proof}

Let us now fix the constant $n_0$ of Lemma \ref{MsTBC1NH4a}.

\begin{lem}[The limit exists]
The value of $\mathcal{L}^{\ast}(Z\upharpoonright _n)$ will converge to some index $i$ with $w_i>0$.
\end{lem}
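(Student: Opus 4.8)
The plan is to show that $\LLast(Z\restr_n)$ settles on a single index by arguing that after the stabilization point $n_0$ of Lemma~\ref{MsTBC1NH4a}, the guess can change only to an index of positive weight, and only finitely often. First I would record the key quantitative consequence of Lemma~\ref{MsTBC1NH4a}: for all $n \ge n_0$ and all $e$, the approximated weight $w^{\ast}_e[n]$ is within $w_m/5$ of the true limit $w_e$. In particular, if $\LLast(Z\restr_n) = e^{\ast}$ is set at some stage $n \ge n_0$ via the rule $\wgt{e^{\ast}} > 3 \cdot \wgt{\LLast(\sigma^-)}$ (where $\wgt{\cdot}$ here is the weight $w^{\ast}_e[n]$ computed against $\mu_{Z\restr_{h(n)}}$), then $w^{\ast}_{e^{\ast}}[n] > 3 w^{\ast}_{\LLast(\sigma^-)}[n] \ge 0$, so $w^{\ast}_{e^{\ast}}[n] > 0$, and hence $w_{e^{\ast}} > w^{\ast}_{e^{\ast}}[n] - w_m/5$; I would choose the threshold bookkeeping so that this forces $w_{e^{\ast}} > 0$. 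By \eqref{gjDmWibB}, any index of positive weight is an index of $\mu^{\ast} = f(Z)$, which is exactly the desired conclusion once convergence is established.

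Next I would prove that the guess changes only finitely often after $n_0$. The mechanism is that a change of guess requires the new candidate's current weight to exceed three times the current weight of the standing guess. I would argue that once the standing guess $i$ has $w_i > 0$ (equivalently $w_i = w_m$, since every positive-weight index is an index of the same measure $\mu^{\ast}$, so they all share the common value), its approximated weight $w^{\ast}_i[n]$ stays above $w_m - w_m/5 = 4w_m/5$ for all $n \ge n_0$ by Lemma~\ref{MsTBC1NH4a}. For a different index $e$ to trigger a change, we would need $w^{\ast}_e[n] > 3 \cdot w^{\ast}_i[n] > 3 \cdot (4w_m/5) = 12 w_m/5$; but again by Lemma~\ref{MsTBC1NH4a} every approximated weight satisfies $w^{\ast}_e[n] < w_e + w_m/5 \le w_m + w_m/5 = 6w_m/5 < 12w_m/5$, a contradiction. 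Hence no change can occur once a positive-weight index has been adopted after stage $n_0$.

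It then remains to show that a positive-weight index \emph{is} adopted at some stage $\ge n_0$. Here I would use that $w_m = \max_e w_e > 0$ and that $w^{\ast}_m[n] \to w_m$, so for large $n$ the index $m$ (or some index achieving the maximum) has approximated weight close to $w_m$. Either the standing guess already has positive weight, in which case we are done by the previous paragraph, or it has zero limit weight; in the latter case its approximated weight is below $w_m/5$ for $n \ge n_0$, while the maximizing index has approximated weight above $4w_m/5 > 3 \cdot (w_m/5)$, triggering exactly the change-of-guess inequality and installing a positive-weight index. I expect the main obstacle to be the careful choice of the comparison constants so that the factor-$3$ rule both permits the initial switch to a positive-weight index and forbids all subsequent switches; this is why Lemma~\ref{MsTBC1NH4a} is stated with the specific slack $w_m/5$, and I would verify that $3 \cdot (4w_m/5) = 12w_m/5$ exceeds the maximum possible approximated weight $6w_m/5$ while $4w_m/5$ exceeds $3 \cdot (w_m/5) = 3w_m/5$, so the arithmetic closes with room to spare.
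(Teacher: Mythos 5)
Your overall strategy (use Lemma~\ref{MsTBC1NH4a}, the factor-$3$ switching rule, and a ``finitely many changes'' argument) matches the paper's, but your second paragraph rests on a false assertion that creates a genuine gap. You claim that $w_i>0$ is equivalent to $w_i=w_m$ ``since every positive-weight index is an index of the same measure $\mu^{\ast}$, so they all share the common value.'' Being indices of the same measure does not make the weights equal: $w_e$ is the $\mu^{\ast}$-measure of the set of reals on which $\VV$ converges to that \emph{particular} index, and these sets are pairwise disjoint with total measure $1$, so the unit mass can be split unevenly among several indices of $\mu^{\ast}$ (e.g.\ $w_{e_1}=0.7$, $w_{e_2}=0.3$). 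Consequently your stability claim --- ``once the standing guess has positive weight, no change can occur'' --- is not established and is false in general: if the standing guess $i$ has $0<w_i<3w_m/5$, then $w^{\ast}_i[n]$ is only guaranteed to exceed $w_i-w_m/5$, and a competitor with $w^{\ast}_e[n]$ near $6w_m/5$ can satisfy $w^{\ast}_e[n]>3w^{\ast}_i[n]$, so it may be dethroned. Your third paragraph's dichotomy (``positive weight, done by the previous paragraph'' vs.\ ``zero weight'') therefore does not close the argument.

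The repair --- and the route the paper actually takes --- is to case-split not on the sign of $w_{e_0}$ but on whether the guess ever changes after stage $n_0$. If it changes at some $n_1\geq n_0$, the newly adopted index $e_1$ is the current \emph{maximizer} of $e\mapsto w^{\ast}_e[n_1]$, hence $w^{\ast}_{e_1}[n_1]\geq w^{\ast}_m[n_1]>4w_m/5$, so $w_{e_1}>3w_m/5$ and $w^{\ast}_{e_1}[n]>2w_m/5$ for all $n\geq n_1$; since every competitor satisfies $w^{\ast}_e[n]<6w_m/5\leq 3w^{\ast}_{e_1}[n]$, no further change occurs (note the margin here is exactly tight, not ``room to spare'' as you suggest). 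If the guess never changes after $n_0$, then $w_{e_0}>0$ must hold, since otherwise $w^{\ast}_{e_0}[n]<w_m/5$ while the maximizer exceeds $4w_m/5>3\cdot(w_m/5)$, forcing a change. The essential fact your write-up is missing is that stability comes from having been adopted \emph{as the arg-max} (hence having true weight $>3w_m/5$), not merely from having positive weight. Your first paragraph has a related soft spot: $w^{\ast}_{e^{\ast}}[n]>0$ alone does not give $w_{e^{\ast}}>0$; you again need $w^{\ast}_{e^{\ast}}[n]\geq w^{\ast}_m[n]>4w_m/5$ from the maximizer property.
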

\begin{proof}
Let $\mathcal{L}^{\ast}(Z\upharpoonright _{h(n_0)}) = e_0$.
In the case that there is some $n\ge h(n_0)$ with 
$\mathcal{L}^{\ast}(Z\upharpoonright _n) \neq e_0$,
there is some $n_1\ge n_0$ such that 
$\mathcal{L}^{\ast}(Z\upharpoonright _{h(n_1)}) = e_1 \neq e_0$.
It then follows from the construction of $\mathcal{L}^{\ast} $ that $w^{\ast}_{e_1}[n_1] \ge w^{\ast}_m[n_1] > 4w_m/5$.
Then by Lemma~\ref{MsTBC1NH4a} for all $n\ge n_1$, 
\[
w^{\ast}_{e_1}[n] > w_{e_1} - w_m/5 > w^{\ast}_{e_1}[n_1] - 2w_m/5 = 2w_m/5
\] 
and on the other hand for all $e$, $w^{\ast}_e[n] < w_e + w_m/5 \le 6w_m/5 < 3w^{\ast}_{e_1}[n]$.
This means that after step $n_1$ the value of $\mathcal{L}^{\ast}(Z\upharpoonright _n)$ will not change and thus, $\lim _n \mathcal{L}^{\ast}(Z\upharpoonright _n) = e_1$ and $w_{e_1} > 4w_m/5 > 0$.
In the case that for all $n\ge h(n_0)$ we have $\mathcal{L}^{\ast}(Z\upharpoonright _n) = e_0$,  
we only need to show that $w_{e_0} > 0$.
Assuming that $w_{e_0} = 0$, there will be some $n_2 > n_0$ such that for all $n \ge n_2$, $w^{\ast}_{e_0}[n] < w_m/4$. Note that $w^{\ast}_m[n] > 4w_m/5 > 3w^{\ast}_{e_0}[n]$, so by the construction of $\mathcal{L}^{\ast}$ the value of $\mathcal{L}^{\ast}(Z\upharpoonright _{h(n_2)})$ will change. 
This is a contradiction.
\end{proof}
The above lemma together with \eqref{gjDmWibB} concludes the proof of \eqref{oPZCcbP3K} and 
 the `only if' direction of Theorem \ref{d3lcvKUJi}.

\subsection{From learning measures to learning reals: the BC case}\label{EZtHGpioBE}
We show the `if' direction of the BC case of Theorem \ref{d3lcvKUJi}.
So consider 
$f:\twome\to\MM$, 
$\DD,\DDast\subseteq \twome$ as given and assume that
$f(\DDast)$ is a BC-learnable class of computable measures.
This means that there exists a learner $\VV$ such that
for all $\mu\in  f(\DDast)$ and $\mu$-random $X$ 
\begin{equation}\label{fFTj1DnwC}
\parbox{13cm}{there exists some $s_0$
such that for all $s>s_0$ the value of $\VV(X\restr_s)$
is an index of $\mu$.}
\end{equation}
We use the expression
$\lim_n \VV(X\restr_n)\approx \mu$  in order to denote
property \eqref{fFTj1DnwC}. Hence our hypothesis about $\VV$ is 
\begin{equation}
\parbox{13cm}{for all $\mu\in  f(\DDast)$ and $\mu$-random $X$ we have
$\lim_n \VV(X\restr_n)\approx \mu$.}
\end{equation}
{\bf Proof idea.}
We would like to employ some kind of majority argument as we did in
Section \ref{g2uXHFLpP}. The problem is that now, given $Z\in\DDast$,  
there is no way to
assign weight on the various indices suggested by $\VV$, in a way that
this weight can be consistently approximated. The reason for this is that
$\VV$ is only a BC-learner and at each step the index guesses 
along  the random reals with respect to $\mu^{\ast}=f(Z)$
may change.  However there is a convergence
in terms of the actual measures that the various indices represent, so we use a function
that takes any number of indices, and as long as there is a majority with respect to
the measures that these indices describe, it outputs an index of this majority  measure.
With this modification, the rest of the argument follows the structure of Section 
 \ref{g2uXHFLpP}.
 
{\bf The formal argument.}

\begin{defi}[Weighted sets]\label{LgRmSqQZrY}
A weighted set is a finite set $A\subset\Nat$ along with
a computable function $(i,s)\mapsto w_i[s]$ from $A\times\Nat$ to the dyadic rationals 
such that $w_i[s]\leq w_i[s+1]$ and $\sum_{i\in A} w_i[s]\leq 1$ for all $s$. 
Given such a weighted set, the weight
of any subset $B\subseteq A$ is $\sum_{i\in B} w_i$, where $w_i:=\lim_s w_i[s]$.\footnote{It
follows from Definition \ref{LgRmSqQZrY} that there is a uniform enumeration of all
weighted sets as programs, so we may refer to {\em an index of a weighted set}. Just like
in any uniform enumeration of programs, we can fix a numbering such that 
any $e\in\Nat$ may be regarded
as an index of a weighted set.}
\end{defi}
In the following we regard each partial computable measure $\mu_e$ as a 
\ce set of tuples $(\sigma,I)$ such that $I$ is a basic open set of $[0,1]$ 
and $\mu_e(\sigma)\in I$
(see Section \ref{Zs7iCd895A}).
\begin{defi}[Majority measures]\label{iBbTCxQ9r5}
Given a weighted set $A$ and a partial computable measure $\mu$,
if the weight of $A\cap\{e\ |\ \mu_e=\mu\}$ is more than $1/2$ we say that
$\mu$ is the majority partial computable measure of $A$.
\end{defi}
Note that there can be at most one 
majority partial computable measure of a weighted set.
In the case that $\mu$ of Definition \ref{iBbTCxQ9r5}
is total, we call it the majority measure of $A$.
\begin{lem}\label{wplqf31FaX}
There is a computable function that maps any index of a weighted set $A$
to an index of a partial computable measure $\mu$ with the property that if
$A$ has a majority partial computable 
measure $\nu$ then $\mu=\nu$.\footnote{More formally, there exists a computable
function $g:\Nat\to\Nat$ such that for each index $e$ of a weighted set $A$,
$g(e)$ is an index of a partial computable measure, and in the case that $A$
has a majority partial computable measure $\nu$, $\mu_{g(e)}=\nu$.}
\end{lem}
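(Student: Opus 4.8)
The plan is to construct the output measure directly as a c.e.\ set of tuples, using the weights of $A$ to run a weighted vote on the value of each string. Given an index of a weighted set $A$ with approximations $w_i[s]$, I would enumerate a c.e.\ set $W$ of pairs $(\sigma,I)$, where $\sigma\in\twomel$ and $I$ is a basic open interval of $[0,1]$, as follows: at each stage $s$, for each $\sigma$ and each such $I$, consider the set $S^\sigma_I[s]$ of those $e\in A$ for which some pair $(\sigma,J)$ with $J\subseteq I$ has already appeared in the enumeration of $\mu_e$; if the weight $\sum_{e\in S^\sigma_I[s]} w_e[s]$ exceeds $1/2$, then put $(\sigma,I)$ into $W$. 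By the $s$-$m$-$n$ theorem this yields a computable $g$ taking an index of $A$ to an index $g(e)$ whose partial computable measure $\mu_{g(e)}$ is represented by $W$, and $W$ is a legitimate representation of a partial computable measure regardless of whether a majority exists.

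It remains to verify that if $A$ has a majority partial computable measure $\nu$, then $\mu_{g(e)}=\nu$; write $B=A\cap\{e\ |\ \mu_e=\nu\}$, a set of weight strictly above $1/2$ by Definition \ref{iBbTCxQ9r5}. The crucial observation is a pigeonhole on weights: since $\sum_{e\in A} w_e\leq 1$, any two subsets of $A$ each of weight more than $1/2$ must share a common element. I would use this for consistency: whenever $(\sigma,I)$ is enumerated into $W$, the triggering set $S^\sigma_I[s]$ has weight more than $1/2$, so it meets $B$ at some index $e$; as $e\in B$ we have $\mu_e=\nu$, and as $e\in S^\sigma_I[s]$ there is $J\subseteq I$ with $\nu(\sigma)=\mu_e(\sigma)\in J\subseteq I$, whence $\nu(\sigma)\in I$. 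Thus every constraint enumerated into $W$ is satisfied by $\nu$.

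For totality of $\mu_{g(e)}$ --- that the enumerated intervals pin down a single real on each string --- I would use that every $e\in B$ is an index of the \emph{total} measure $\nu$, so for each $\sigma$ the quantity $\inf\{|J|\ |\ (\sigma,J)\in W_e\}$ is $0$, with every such $J$ containing $\nu(\sigma)$. Hence for every basic open interval $I$ with $\nu(\sigma)$ in its interior, each $e\in B$ eventually contributes a witness $J\subseteq I$, so $S^\sigma_I[s]$ eventually contains all of $B$ and thus has weight more than $1/2$; this forces $(\sigma,I)$ into $W$. Taking $I$ of length tending to $0$ around $\nu(\sigma)$ gives $\inf\{|I|\ |\ (\sigma,I)\in W\}=0$, so together with the consistency established above we conclude $\mu_{g(e)}=\nu$.

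I expect the main obstacle to be the consistency argument --- isolating exactly the right triggering condition (the restriction $J\subseteq I$, rather than mere consistency of $I$ with $\mu_e(\sigma)$) so that the weight-intersection pigeonhole transfers membership in the triggering set into an actual constraint on $\nu$. The shrinking and totality part is comparatively routine once one notes that the whole of $B$ survives to vote on every sufficiently small neighborhood of $\nu(\sigma)$.
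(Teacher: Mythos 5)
Your proposal is correct and takes essentially the same route as the paper: define the output measure as the c.e.\ set of tuples $(\sigma,I)$ whose supporters in $A$ accumulate weight above $1/2$, and use the pigeonhole fact that any two subsets of $A$ of weight above $1/2$ must intersect to transfer every enumerated constraint onto the majority measure $\nu$. The only divergence is your more liberal triggering condition (an index $e$ supports $(\sigma,I)$ once some $(\sigma,J)$ with $J\subseteq I$ appears in $\mu_e$, rather than requiring the tuple $(\sigma,I)$ itself to appear), and this is actually a small improvement over the paper's version: it makes the totality clause $\inf\{|I|\ :\ (\sigma,I)\in W\}=0$ go through even when distinct indices of $\nu$ enumerate different shrinking families of intervals, a point the paper's verification passes over silently.
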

\begin{proof}
Given a weighted set $A$ we effectively define a partial computable
measure $\mu$ and then verify its properties.
We view partial computable measures as \ce sets of tuples
$(\sigma,I)$  where $\sigma\in\twomel$ and $I$ is an open rational interval of $[0,1]$
and $(\sigma,I)\in\mu$ indicates that $\mu(\sigma)\in I$.
Define the weight of tuple  $(\sigma,I)$ to be the weight of
$\{i\in A\colon (\sigma,I)\in\mu_i\}$. Then define $\mu$
as the tuples $(\sigma,I)$ of weight $>1/2$.

It remains to verify that if $A$ has a majority partial computable measure then
$\mu$ is the majority partial computable measure of $A$. If 
$\nu$ is the majority partial computable measure of $A$ it is clear that
for each $(\sigma,I)\in\nu$ we have $(\sigma,I)\in\mu$.
Conversely, if $(\sigma,I)\in\mu$, there would be a subset $B\subseteq A$
of weight $>1/2$ such that $(\sigma,I)\in\mu_i$ for all $i\in B$. Since
$\nu$ is the majority partial computable measure of $A$, it follows that
there is an index of $\nu$ in $B$ (otherwise the weight of $A$ would exceed 1).
Hence $(\sigma,I)\in\nu$, which concludes the proof.
\end{proof}
Recall the function $g$ from \eqref{xD7HzXSEpd}. It suffices to show that
\begin{equation}\label{ewrFCFgd7u}
\parbox{13cm}{there exists a computable function $\LLast:\twomel\to\twomel$
such that 
for each $Z\in \DD^{\ast}$ we have 
$\lim_s \LLast(Z\restr_s)\approx f(Z)$}
\end{equation}
because then the function $\LL(\sigma)=g(\LLast(\sigma))$ will be a computable
BC-learner for $\DDast$.

{\bf Definition of $\LLast$.}
We let $\LLast$ map the empty string to index 0 and for every other string $\sigma$ 
we define $\LLast(\sigma)$ as follows. If $\sigma\not\in \{2^{h(n)} \ |\ n\in\Nat\}$
 then let $\LLast(\sigma)=\LLast(\tau)$
where $\tau$ is the longest prefix of $\sigma$ in $\{2^{h(n)} \ |\ n\in\Nat\}$.
So it remains to define $\LLast$ in steps, where at step $n$
we define $\LLast$ on all string $\sigma\in 2^{h(n)}$.
Since $f^{\ast}(\sigma)$ is basic open interval in $\MM$, we may use
\eqref{LhxifOccwG} in order to get a
computable function $\sigma\to\mu_{\sigma}$
from strings to computable measures, such that
for each $\sigma$ the measure $\mu_{\sigma}$ belongs to
$f^{\ast}(\sigma)$.
Given $n$ and $\sigma\in 2^{h(n)}$, 
for each $e$ define 
\[
\wgt{e}=\mu_{\sigma}(\{\tau\in 2^{n}\ |\ \VV(\tau)=e\}).
\]
Let $A_n$ be the weighted set of all $e$ such that 
$\wgt{e}>0$ (clearly there are at most $2^n$ many such numbers $e$)
where the weight of $e\in A_n$ is $\wgt{e}$.
Then apply the computable function of
Lemma \ref{wplqf31FaX} to $A_n$ and let $\LLast(\sigma)$ be the resulting index.

{\bf Properties of $\LLast$.}
We show that $\LLast$ satisfies \eqref{ewrFCFgd7u}, so let $Z$ be a computable
member of $\DD^{\ast} $.

Let $\mu^{\ast}:=f(Z)$ and  define
\begin{eqnarray*}
w&=&\mu^{\ast} \Big(\big\{X\ |\ \lim_s (\VV(X\restr_s)\approx\mu^{\ast})\big\}\Big)\\
w_n&=&\mu^{\ast} \Big(\big\{\sigma\in 2^{n}\ |\ 
\textrm{$\VV(\sigma)$ is an index of $\mu^{\ast}$})\big\}\Big).
\end{eqnarray*}
\begin{lem}\label{cxkCAxhJH1}
$\lim_n w_n=w=1$.
\end{lem}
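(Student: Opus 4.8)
The plan is to prove the two assertions $w=1$ and $\lim_n w_n=w$ separately: the first rests on the BC-learning hypothesis together with effective orthogonality, while the second is a measure-theoretic convergence argument in the spirit of Lemma~\ref{mfqpAdyGd3}.

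First I would establish $w=1$. Since $Z\in\DDast$, the measure $\mu^{\ast}=f(Z)$ lies in $f(\DDast)$, so $\VV$ BC-succeeds on every $\mu^{\ast}$-random real $X$: there is a measure $\mu'\in f(\DDast)$ with $X$ being $\mu'$-random and with $\VV(X\restr_s)$ eventually an index of $\mu'$. Because $f(\DD)$ is effectively orthogonal and both $\mu^{\ast},\mu'\in f(\DD)$ share the random real $X$, we must have $\mu'=\mu^{\ast}$. Hence $\lim_n\VV(X\restr_n)\approx\mu^{\ast}$ for every $\mu^{\ast}$-random $X$. Since the set of $\mu^{\ast}$-random reals has $\mu^{\ast}$-measure $1$, this yields $w=1$.

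For the second assertion I would introduce the decreasing family of open sets
\[
Q_m=\big\{X\ |\ \exists n\geq m\ \text{such that $\VV(X\restr_n)$ is not an index of $\mu^{\ast}$}\big\},
\]
so that $Q_{m+1}\subseteq Q_m$ and $\bigcap_m Q_m$ is precisely the set of reals on which the relation $\lim_n\VV(X\restr_n)\approx\mu^{\ast}$ fails, i.e.\ the complement of the set measured by $w$. By the first assertion this intersection is $\mu^{\ast}$-null, so by continuity from above $\lim_m\mu^{\ast}(Q_m)=0$. Now for every $n\geq m$ the complement of the clopen set measured by $w_n$, namely $\{X\ |\ \VV(X\restr_n)\ \text{is not an index of}\ \mu^{\ast}\}$, depends only on $X\restr_n$ and is contained in $Q_m$; hence $w_n\geq 1-\mu^{\ast}(Q_m)$. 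Letting $n\to\infty$ and then $m\to\infty$ gives $\liminf_n w_n\geq 1$, and since $w_n\leq 1$ we conclude $\lim_n w_n=1=w$.

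The main obstacle is the first assertion, as it is where the hypotheses of the theorem enter in an essential way: BC-success only guarantees that $\VV$ converges (behaviorally) to indices of \emph{some} measure with respect to which $X$ is random, and it is exactly the effective orthogonality of $f(\DD)$ that pins this measure down to $\mu^{\ast}$ itself. Once $w=1$ is secured, the passage to $\lim_n w_n=w$ is a routine monotone-measure computation, requiring only that the relevant sets be measurable and that $Q_m$ be decreasing, just as in the EX case.
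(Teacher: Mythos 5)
Your proof is correct and follows essentially the same route as the paper: the BC-learning hypothesis combined with effective orthogonality gives $w=1$, and the decreasing open sets $Q_m$ together with continuity from above give $\lim_n w_n=w$. The only difference is cosmetic — the paper additionally introduces increasing closed sets $P_n$ to bound $\limsup_n w_n$ and $\liminf_n w_n$ separately, whereas you exploit $w=1$ to get the one-sided estimate $w_n\ge 1-\mu^{\ast}(Q_m)$, which together with $w_n\le 1$ already suffices.
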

\begin{proof}
Since $Z\in \DD^{\ast} $ it follows that $\VV$ learns $\mu^{\ast}$, hence $w=1$.
It remains to show that $\lim_n w_n=w$.
For each $n$ let $Q_n$ be the open set of reals $X$ with
the property that there exists some 
$t>n$ such that $\VV(X\restr_t)$ is not an index of $\mu^{\ast}$. 
Then $Q_{n+1}\subseteq Q_n$ and since $\VV$ learns $\mu^{\ast}$ we have 
$\lim_n \mu^{\ast} (Q_n)=\mu^{\ast} (\cap_n Q_n)=0$.
Let $P_n$ be the closed set for reals $X$ such that for all $i\geq n$
the value of $\VV(X\restr_i)$ is an index of $\mu^{\ast}$.
Then $P_n\subseteq P_{n+1}$ for all $n$ and
$w$ is the $\mu^{\ast}$-measure of $\cup_n P_n$.
Hence $w=\mu^{\ast}(\cup_n P_n)=\lim_n \mu^{\ast}(P_n)$.

Given $n_0$, for each $n\geq n_0$ we have
$w_n\leq \mu^{\ast}(P_{n_0}) + \mu^{\ast}(Q_{n_0})$.
This shows that $\limsup_n w_n\leq \limsup_n \mu^{\ast}(P_n)=w_e$.
On the other hand $P_{n_0}\subseteq\dbra{\{\sigma\in 2^{n}\ |\ \textrm{$\VV(\sigma)$ is an index of $\mu^{\ast}$})\}}$
for all $n\geq n_0$. So $w=\lim_n ( \mu^{\ast}(P_n)) \leq \liminf_n w_n$.
It follows that $\lim_n w_n=w$.
\end{proof}

\begin{lem}\label{HrqOe7iOop}
For each $Z\in\DDast$, there exists some $n_0$ such that for all $n>n_0$ the value of
$\LLast(Z\restr_n)$ is an index of $f(Z)=\mu^{\ast}$.
\end{lem}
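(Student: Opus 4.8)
The plan is to mimic the structure of the EX case from Section~\ref{g2uXHFLpP}, now using the majority-measure machinery of Lemma~\ref{wplqf31FaX} in place of the simple largest-weight rule. Fix $Z\in\DDast$ and write $\mu^{\ast}=f(Z)$. The key quantities are the weights $w^{\ast}_e[n]=\mu_{Z\restr_{h(n)}}\big(\{\sigma\in 2^{n}\ |\ \VV(\sigma)=e\}\big)$ computed from the approximating measure $\mu_{Z\restr_{h(n)}}\in f^{\ast}(Z\restr_{h(n)})$, and I would again compare these with the true weights $w_n=\mu^{\ast}\big(\{\sigma\in 2^{n}\ |\ \VV(\sigma)\text{ is an index of }\mu^{\ast}\}\big)$ using the estimate \eqref{S72AlHj9Wf}, so that $|w_n - \sum_{\{e\,:\,\mu_e=\mu^{\ast}\}} w^{\ast}_e[n]| < 2^{-n}$. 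By Lemma~\ref{cxkCAxhJH1} we have $\lim_n w_n = 1$.

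First I would argue that for all sufficiently large $n$, the weighted set $A_{h(n)}$ has $\mu^{\ast}$ as its majority measure. Indeed, since $\lim_n w_n=1$, there is some $n_0$ with $w_n>3/4$ for all $n\ge n_0$; combining this with the $2^{-n}$ error bound from \eqref{S72AlHj9Wf}, the total $w^{\ast}$-weight carried by indices $e$ with $\mu_e=\mu^{\ast}$ exceeds $1/2$ once $n$ is large enough. The crucial point is that any index $e$ appearing in $A_{h(n)}$ with $\VV(\sigma)=e$ for $\sigma$ of nonnegligible $\mu^{\ast}$-measure must, by the effective orthogonality of $f(\DD)$ and the fact that $\VV$ BC-learns $f(\DDast)$, eventually be an index of $\mu^{\ast}$ itself on the bulk of the measure; this is exactly what the definition of $w_n$ and Lemma~\ref{cxkCAxhJH1} encode. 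So for $n\ge n_0$, the weight of $A_{h(n)}\cap\{e\ |\ \mu_e=\mu^{\ast}\}$ is more than $1/2$, which means $\mu^{\ast}$ is the majority partial computable measure of $A_{h(n)}$ in the sense of Definition~\ref{iBbTCxQ9r5}.

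Then I would invoke Lemma~\ref{wplqf31FaX}: since $A_{h(n)}$ has majority partial computable measure $\mu^{\ast}$, the index $\LLast(Z\restr_{h(n)})$ produced by the computable function of that lemma is an index of a partial computable measure equal to $\mu^{\ast}$. Because $\mu^{\ast}=f(Z)$ is a genuine (total) computable measure, this is an index of $\mu^{\ast}$ as required. Finally, by the definition of $\LLast$ on strings whose length is not of the form $h(n)$, the value $\LLast(Z\restr_m)$ for any $m$ equals $\LLast(Z\restr_{h(n)})$ for the appropriate $n$ with $h(n)\le m$; since $h$ is increasing and unbounded, taking $n_0$ as above and letting $N=h(n_0)$ gives that $\LLast(Z\restr_m)$ is an index of $\mu^{\ast}$ for all $m> N$, which is precisely $\lim_m \LLast(Z\restr_m)\approx f(Z)$.

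The main obstacle I anticipate is the transfer between the true measure $\mu^{\ast}$ and the approximating measures $\mu_{Z\restr_{h(n)}}$ used in the construction: the majority condition must hold for the \emph{approximated} weights $w^{\ast}_e[n]$ that $\LLast$ actually computes, not for the idealized $w_n$. Controlling this requires carefully combining the $2^{-n}$ gap from \eqref{S72AlHj9Wf} with the convergence $\lim_n w_n=1$ so that the approximated majority weight strictly exceeds $1/2$ for all large $n$; one must also confirm that indices $e$ with $\mu_e\ne\mu^{\ast}$ cannot accumulate more than half the approximated weight, which again rests on effective orthogonality forcing their $\mu^{\ast}$-measure contribution to vanish in the limit. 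Once this quantitative bookkeeping is pinned down, the conclusion follows directly from Lemma~\ref{wplqf31FaX}.
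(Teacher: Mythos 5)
Your proposal is correct and follows essentially the same route as the paper: both reduce the claim to showing that for all large $n$ the weighted set at stage $n$ has majority measure $\mu^{\ast}$, using Lemma~\ref{cxkCAxhJH1} together with the $2^{-n}$ bound from \eqref{S72AlHj9Wf} to get the approximated majority weight above $1/2$, and then conclude via Lemma~\ref{wplqf31FaX}. The only differences are cosmetic (your threshold $3/4$ versus the paper's direct choice of $n_0$ with $w^{\ast}_n>1/2$).
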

\begin{proof}
Given $Z\in\DDast$ consider the definition of $\LLast(Z\restr_{h(n)})$ during the various
stages $n$, and the associated weighted sets $A_n$ .
According to the construction of $\LLast$ and Lemma \ref{wplqf31FaX} it suffices
to show that
\begin{equation}\label{KrjvUy8FXr}
\parbox{13cm}{there exists $n_0$ such that for all $n>n_0$ the weighted set
$A_n$ in the definition of 
$\LLast(Z\restr_{h(n)})$
has a majority measure which equals $\mu^{\ast}$.}
\end{equation}
Consider
the sequence $\mu_{Z\restr_{h(n)}}\in f^{\ast}(Z\restr_{h(n)})$ 
of computable measures 
that are defined by the function $\sigma\mapsto\mu_{\sigma}$ applied on $Z$,
and let 
\[
w^{\ast}_n=\mu_{Z\restr_{h(n)}} \Big(\big\{\sigma\in 2^{n}\ |\ 
\textrm{$\VV(\sigma)$ is an index of $\mu^{\ast}$})\big\}\Big).
\]
From \eqref{S72AlHj9Wf} we get that for each $n$,
$|w_n-w^{\ast}_n|< 2^{-n}$.
In particular, by Lemma \ref{cxkCAxhJH1},
$\lim_n w_n=\lim_n w^{\ast}_n=1$.
For  \eqref{KrjvUy8FXr} it suffices to consider any $n_0$ such that
for all $n>n_0$ we have $w_n^{\ast}>1/2$.
Then by the construction of $\LLast$ at step $n$ and the definition 
of $w_n^{\ast}$ it follows that for each $n>n_0$, the majority measure of the weighted set 
$A_n$ is $\mu^{\ast}$.
\end{proof}

Lemma \ref{HrqOe7iOop} shows that $\LLast$
satisfies \eqref{ewrFCFgd7u}, which concludes the BC case of the proof
of the `if' direction of Theorem \ref{d3lcvKUJi}.

\subsection{From learning measures to learning reals: an extension}\label{F8FbxE437v}
There is a way in which we can relax the hypotheses of the
`if' direction of Theorem \ref{d3lcvKUJi} for EX-learning, which concerns the strength of learning
as well as the orthogonality hypothesis.

\begin{defi}[Partial EX-learnability of classes of computable measures]\label{w2WqzOBoC5}
A class $\CC$ of computable measures is partially EX-learnable if there exists
a computable learner $\VV:\twomel\to\Nat$ such that
\begin{enumerate}[\hspace{0.5cm}(a)]
\item $\CC$ is weakly EX-learnable via $\VV$ (recall Definition \ref{Vr7RSPbOiE});
\item for every $\mu\in\CC$ there exists a $\mu$-random $X$ such that  
$\lim_n \VV(X\restr_n)$ is an index of $\mu$.
\end{enumerate}
\end{defi}
The idea behind this notion is that not only for each $\mu\in \CC$
the learner eventually guesses a correct measure (possibly outside $\CC$) 
along each $\mu$-random real, but in addition every measure $\mu\in\CC$ is
represented as a response of the learner along {\em some} $\mu$-random real.
\begin{thm}[An extension]\label{2dbc2Z3Im}
Suppose that a
computable function $f:\twome\to\MM$ is injective on
an effectively closed set $\DD\subseteq \twome$, and $\DDast\subseteq\DD$ is a set of
computable reals.
If $f(\DDast)$ is a partially EX-learnable class of computable measures then $\DDast$
is an EX-learnable class of computable reals.
\end{thm}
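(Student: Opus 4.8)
The plan is to reduce, exactly as in Section \ref{g2uXHFLpP}, the task of EX-learning $\DDast$ to that of producing in the limit an index of the measure $\mu^{\ast}=f(Z)$ for each $Z\in\DDast$: the map $g$ of \eqref{xD7HzXSEpd} (whose construction only used the injectivity of $f$ on $\DD$, which we still have) converts any index of a measure $\mu^{\ast}\in f(\DD)$ into an index of the unique $Z\in\DD$ with $f(Z)=\mu^{\ast}$, so a computable $\LLast:\twomel\to\Nat$ satisfying the analogue of \eqref{oPZCcbP3K} yields the learner $\LL(\sigma)=g(\LLast(\sigma))$. As there, reading longer and longer prefixes of $Z$ and applying the computable $f^{\ast}$ and $h$ gives, at stage $n$, an approximation to $\mu^{\ast}$ that by \eqref{S72AlHj9Wf} pins down every value $\mu^{\ast}(\sigma)$ with $|\sigma|\le n$ to within $2^{-n}$; thus from $Z$ we can compute $\mu^{\ast}$ as a function to arbitrary precision.

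The key difference is that, having dropped effective orthogonality, the weighted-majority index of Section \ref{g2uXHFLpP} need no longer be an index of $\mu^{\ast}$: the learner $\VV$ may converge, along a full-measure set of $\mu^{\ast}$-random reals, to indices of a measure $\mu'\neq\mu^{\ast}$ that merely shares its random reals with $\mu^{\ast}$, while clause (b) of Definition \ref{w2WqzOBoC5} only guarantees \emph{one} $\mu^{\ast}$-random real $X_0$ with $\lim_n\VV(X_0\restr_n)$ an index of $\mu^{\ast}$ -- a set that may have $\mu^{\ast}$-measure zero. Hence I abandon the weighting scheme and use $\VV$ purely as a generator of candidate indices: dovetailing over all strings $\tau$, I enumerate $\{\VV(\tau)\ :\ \tau\in\twomel\}$, and clause (b) ensures that some index $e_0$ of $\mu^{\ast}$ (namely $\VV(X_0\restr_n)$ for large $n$) is eventually generated, even though $X_0$ cannot be computed. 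Each generated candidate $e$ is then \emph{verified against the function} $\mu^{\ast}$: running the approximation of $\mu_e$ against the precise values of $\mu^{\ast}$ computed from $Z$, I track the largest level up to which $\mu_e$ has been computed and found to agree with $\mu^{\ast}$. The crucial point is that verification compares measures as functions, not as randomness notions, so a decoy $\mu'\neq\mu^{\ast}$ -- even one with the same random reals -- differs from $\mu^{\ast}$ on some string and is eventually exposed; a partial index is bounded by its first undefined level; and an index of $\mu^{\ast}$ has agreement-level tending to infinity. Thus the $\mu^{\ast}$-indices are exactly the candidates whose verified agreement-level is unbounded.

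It remains to converge to a single such index. I order the candidates by the stage at which they are generated and run a finite-injury style selection: I keep a current guess, discard a candidate permanently once a genuine disagreement with $\mu^{\ast}$ is found, and act on the highest-priority (earliest generated) candidate whose agreement-level has just set a new record, switching the output away from the current guess only to a strictly higher-priority candidate or when the current guess is discarded. Writing $e^{\sharp}$ for the earliest-generated index of $\mu^{\ast}$, only finitely many candidates precede $e^{\sharp}$; each is either total but different from $\mu^{\ast}$, hence eventually discarded, or partial, hence of bounded agreement-level and so acting only finitely often. After the last such action, $e^{\sharp}$ -- whose agreement-level grows without bound -- is the highest-priority candidate acting infinitely often, so the guess stabilises at $e^{\sharp}$; since $\mu_{e^{\sharp}}=\mu^{\ast}$ it is never discarded and no earlier candidate displaces it again. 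Therefore $\lim_n\LLast(Z\restr_n)=e^{\sharp}$ and $\lim_n\LL(Z\restr_n)=g(e^{\sharp})$ is an index of $Z$.

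I expect the main obstacle to be exactly this convergence management. The Section \ref{g2uXHFLpP} argument obtained totality (and hence validity of its guess) for free from positive weight via weak EX-learnability; with the relevant index potentially carrying zero weight, that shortcut is gone, and one must certify a correct total index while the genuine target may be an arbitrarily slow program surrounded by partial or wrong candidates that masquerade as correct for long finite stretches. The device that makes this tractable -- and that I would take most care over -- is ordering candidates by generation, which bounds the number that can block the target and lets a finite-injury argument force a single correct limit; verifying against the exactly-computable $\mu^{\ast}=f(Z)$, rather than against randomness, is what lets decoy measures sharing the randoms of $\mu^{\ast}$ be eliminated despite the loss of orthogonality.
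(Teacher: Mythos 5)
Your overall architecture---reducing to \eqref{oPZCcbP3K} via the function $g$ of \eqref{xD7HzXSEpd}, enumerating the countable candidate set $\{\VV(\tau)\ |\ \tau\in\twomel\}$ (which by clause (b) of Definition \ref{w2WqzOBoC5} must contain an index of $\mu^{\ast}=f(Z)$), and verifying each candidate as a \emph{function} against the $Z$-computable approximations $f^{\ast}(Z\restr_{h(n)})$ so that total measures different from $\mu^{\ast}$ are eventually exposed---is sound and genuinely different from the paper's proof. Two points, however. First, the premise on which you abandon the weighting scheme is false: Lemma \ref{AJBtUQUdkf} of the paper shows that the single witness of clause (b) already forces positive weight, because $\{Y\ |\ \lim_n\VV(Y\restr_n)=e\}$ is a $\Sigma^0_2$ class, hence contains a $\Pi^0_1$ class around the $\mu^{\ast}$-random witness $X_0$, and a $\Pi^0_1$ class containing a $\mu^{\ast}$-random real has positive $\mu^{\ast}$-measure. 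So ``a set that may have $\mu^{\ast}$-measure zero'' is wrong, and this observation is exactly what lets the paper keep the weighted vote (which automatically displaces zero-weight, in particular partial, candidates) and merely add the predicate $H$ to discard positive-weight total measures different from $\mu^{\ast}$.

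Second, and this is the genuine gap: your convergence rule does not converge to a correct index. You allow the output to move off the current guess only towards a strictly higher-priority (earlier-generated) candidate or when the current guess is \emph{discarded}, and discarding happens only upon a detected disagreement with $\mu^{\ast}$. A candidate $e''$ for which $\mu_{e''}$ is \emph{partial} is never discarded---its approximation simply stops producing values, which is not detectable in finite time---yet it can be generated before $e^{\sharp}$ and capture the guess early (say $\mu_{e''}$ agrees with $\mu^{\ast}$ on levels $0,1,2$ and is undefined at level $3$). From then on no higher-priority candidate acts and $e''$ is never discarded, so by your own rule the output is stuck at $e''$ forever, and $g(e'')$ is not an index of $Z$. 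You need an extra escape clause letting a lower-priority candidate displace a current guess that has stopped making progress, for instance: switch to $c$ also when the verified agreement-level of $c$ strictly exceeds that of the current guess. With that clause each of the finitely many candidates preceding $e^{\sharp}$ is either eventually discarded (total and wrong) or has bounded agreement-level (partial), while the level of $e^{\sharp}$ is unbounded, and your finite-injury argument does go through. This missing clause is precisely the role played in the paper's construction by the weight-comparison condition $\wgt{e^{\ast}}>3\cdot \wgt{t}$, which you removed without replacement.
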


{\bf Proof idea.}
We would like to follow the argument of Section \ref{g2uXHFLpP}, but now we have
a weaker assumption which allows the possibility that given $Z\in\DDast$, $\mu^{\ast}=f(Z)$,
there are indices $e$ with positive weight, which do not describe $\mu^{\ast}$.
In order to eliminate these guesses from the approximation $n\to\LLast(Z\restr_n)$ to 
an index of $f(Z)$, we compare how near the candidate measures are to
our current approximation to $\mu^{\ast}$. Using this approach, combined with the
crucial fact (to be proved) that indices with positive weight correspond to total
measures, allows us to eliminate the incorrect total measures (eventually they will be contained
in basic open sets that are disjoint from the open ball $f(Z\restr_n)$ containing $f(Z)$) 
and correctly approximate an index of $\muast$.

{\bf The formal argument.}
Recall the argument from Section \ref{g2uXHFLpP} and note that
\eqref{xD7HzXSEpd} continues to hold under the hypotheses of Theorem \ref{2dbc2Z3Im}.
Hence it suffices to construct a computable $\LLast: \twomel\to\Nat$
such that \eqref{oPZCcbP3K} holds.
Since $f(\DDast)$ is a partially EX-learnable class of computable measures, there exists
$\VV$ with the properties of Definition \ref{w2WqzOBoC5} with respect to $\CC:=f(\DDast)$.
\begin{lem}\label{AJBtUQUdkf}
Every measure $\mu^{\ast}\in f(\DDast)$ has an index $e$ such that
 $\lim_n \VV(X\restr_n)=e$ for a positive $\mu^{\ast}$-measure of reals $X$.
\end{lem}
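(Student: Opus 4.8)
The plan is to use clause (b) of Definition \ref{w2WqzOBoC5} to locate a single $\muast$-random real on which $\VV$ already converges to an index of $\muast$, and then to upgrade this single witness to a positive-measure set of witnesses by a standard effectively-closed-set argument. Concretely, I would fix $\muast\in f(\DDast)$ and apply clause (b) to obtain a $\muast$-random real $X_0$ such that $e_0:=\lim_n\VV(X_0\restr_n)$ is an index of $\muast$; this $e_0$ is the index the lemma asks for. Since the limit exists, there is some $N_0$ with $\VV(X_0\restr_n)=e_0$ for all $n\geq N_0$.

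The key step is to consider the set
\[
P=\{X\in\twome : \VV(X\restr_n)=e_0\ \text{for all}\ n\geq N_0\}.
\]
Because $\VV$ is computable, each condition $\VV(X\restr_n)=e_0$ depends only on $X\restr_n$ and is clopen, so $P$ is a $\Pi^0_1$ class, and by construction $X_0\in P\subseteq\{X : \lim_n\VV(X\restr_n)=e_0\}$. It therefore suffices to show $\muast(P)>0$. Here I would invoke the standard fact that a $\Pi^0_1$ class of $\muast$-measure zero contains no $\muast$-random real: writing $P=\bigcap_m C_m$ as a decreasing intersection of clopen sets, if $\muast(P)=0$ then $\muast(C_m)\to 0$, and selecting for each $i$ a stage $s_i$ with $\muast(C_{s_i})<2^{-i}$ (possible since $\muast$ is computable) yields a \ml $\muast$-test $(C_{s_i})_i$ with $P\subseteq\bigcap_i C_{s_i}$. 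As $X_0\in P$ is $\muast$-random, this is impossible, so $\muast(P)>0$, and hence $\muast(\{X : \lim_n\VV(X\restr_n)=e_0\})>0$ with $e_0$ an index of $\muast$, as required.

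The main obstacle is precisely this upgrade from a single real to positive measure, and it is worth stressing why clause (b), together with the computability of $\VV$, is indispensable. Clause (a) alone permits $\VV$ to converge, on every $\muast$-random real, to an index of a computable measure $\nu\neq\muast$ with $\muast=\Theta(\nu)$, which has exactly the same random reals; then the set of reals on which $\VV$ converges to an index of $\muast$ could a priori be $\muast$-null, which is exactly the non-orthogonal phenomenon motivating this extension. Clause (b) rules this out by producing a genuine random witness $X_0$, and the $\Pi^0_1$-class fact then does the real work, converting that witness into positive measure. The remaining bookkeeping — that $e_0$ is indeed an index of $\muast$ and that $P$ is contained in the convergence set — is immediate from the choice of $X_0$ and $N_0$.
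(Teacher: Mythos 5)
Your proof is correct and follows essentially the same route as the paper's: both obtain a $\muast$-random witness $X_0$ from clause (b) of Definition \ref{w2WqzOBoC5}, locate it inside a $\Pi^0_1$ class contained in the convergence set $\{X : \lim_n\VV(X\restr_n)=e_0\}$ (the paper presents this set as a $\Sigma^0_2$ union of $\Pi^0_1$ classes and picks the member containing $X_0$, which is exactly your $P$), and conclude positive measure from the fact that a null $\Pi^0_1$ class contains no $\muast$-random real. Your write-up merely makes explicit the \ml-test construction that the paper leaves implicit.
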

\begin{proof}
Let $\mu^{\ast}\in f(\DDast)$ and consider a $\mu^{\ast}$-random $X$ such that
$\lim_n \VV(X\restr_n)$ is an index $e$ of $\mu^{\ast}$. Consider
the $\Sigma^0_2$ class $\FF$ of reals $Z$ with the property that
$\lim_n \VV(Z\restr_n)=e$. It remains to show that $\mu^{\ast}(\FF)>0$.
Since $\FF$ is the union of a sequence of $\Pi^0_1$ classes and $X\in \FF$,
there exists a $\Pi^0_1$ class $P\subseteq \FF$ which contains $X$.
Since $X$ is $\mu^{\ast} $-random, it follows that $\mu^{\ast}(P)>0$, so $\mu^{\ast}(\FF)\geq \mu^{\ast}(P)>0$.
\end{proof}

Given $\mu^{\ast}\in f(\DDast)$  define 
$w_e,w_e[n]$
as we did in Section \ref{g2uXHFLpP}.
Note that Lemma \ref{mfqpAdyGd3} still holds by the same argument, since it 
only uses the hypotheses we presently have about $\DD, f,\VV$.

\begin{lem}\label{FDNNq7nVa5}
For every $\mu^{\ast}\in f(\DDast)$ there exists an index $e$ of $\muast$ such that $w_e>0$.
Conversely, if $w_e>0$ then $e$ is an index of a computable measure $\mu'$.
\end{lem}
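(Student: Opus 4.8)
Let me restate the target. Fix $\mu^{\ast}\in f(\DDast)$, and recall $w_e=\mu^{\ast}(\{X\mid \lim_s\VV(X\restr_s)=e\})$. I must show two things: (i) some index $e$ of $\mu^{\ast}$ has $w_e>0$; (ii) whenever $w_e>0$, the index $e$ describes a total computable measure.

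Let me think about what each part needs.

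Part (i): $w_e>0$ for some index of $\mu^{\ast}$. This is essentially Lemma \ref{AJBtUQUdkf}, which was just proved. By that lemma, $\mu^{\ast}$ has an index $e$ with $\lim_n\VV(X\restr_n)=e$ on a positive $\mu^{\ast}$-measure set of $X$. But that set is exactly $\{X\mid\lim_s\VV(X\restr_s)=e\}$, so $w_e>0$. So part (i) is immediate from the preceding lemma.

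Part (ii): the converse. If $w_e>0$, then $e$ indexes a total computable measure. This is the real content, and the one the proof-idea flagged as "crucial." Let me think about why it's true. If $w_e>0$, there is a positive-$\mu^{\ast}$-measure set of reals $X$ on which $\lim_s\VV(X\restr_s)=e$. Now invoke the weak EX-learnability (Definition \ref{Vr7RSPbOiE}, part (a) of Definition \ref{w2WqzOBoC5}): for every $\mu^{\ast}$-random $X$, the limit $\lim_s\VV(X\restr_s)$ exists and equals an index of a computable measure $\mu'$ with respect to which $X$ is random. The set of $\mu^{\ast}$-random reals has $\mu^{\ast}$-measure $1$. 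Since $\{X\mid\lim_s\VV(X\restr_s)=e\}$ has positive $\mu^{\ast}$-measure, it must contain at least one $\mu^{\ast}$-random real $X_0$ (the non-random reals form a $\mu^{\ast}$-null set, so they cannot swallow a positive-measure set). For that $X_0$, weak learnability guarantees $\lim_s\VV(X_0\restr_s)=e$ is an index of a \emph{total computable} measure. That's exactly the claim.

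So the plan is: for (i), quote Lemma \ref{AJBtUQUdkf} and observe the positive-measure set there is precisely $\{X\mid\lim_n\VV(X\restr_n)=e\}$, giving $w_e>0$. For (ii), suppose $w_e>0$; the positive-measure set $S_e=\{X\mid\lim_s\VV(X\restr_s)=e\}$ cannot be contained in the $\mu^{\ast}$-null set of non-$\mu^{\ast}$-random reals, so pick a $\mu^{\ast}$-random $X_0\in S_e$; by weak EX-learnability (part (a) of the partial-learnability hypothesis) $\lim_s\VV(X_0\restr_s)$ is an index of a total computable measure, and this limit is $e$, so $e$ is such an index. The only subtlety worth stating carefully is that a positive-$\mu^{\ast}$-measure set must meet the full-measure set of $\mu^{\ast}$-random reals; this is immediate since their complement is $\mu^{\ast}$-null, but it is the single point where the argument genuinely uses that "positive weight" forces randomness into play, and hence where partial learnability delivers totality. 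I do not expect any real obstacle beyond stating this cleanly.
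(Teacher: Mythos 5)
Your proof is correct and follows essentially the same route as the paper: part (i) is quoted directly from Lemma \ref{AJBtUQUdkf}, and part (ii) uses the fact that a set of positive $\mu^{\ast}$-measure must meet the full-measure set of $\mu^{\ast}$-random reals, on which clause (a) of Definition \ref{w2WqzOBoC5} forces the limit index $e$ to describe a total computable measure. The paper's own proof is just a terser statement of exactly this argument.
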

\begin{proof}
The first claim is Lemma \ref{AJBtUQUdkf}.
For the second claim, if $w_e>0$ it follows from clause (a) of Definition \ref{w2WqzOBoC5}
applied on $\VV$ that $e$ is an index of a computable measure $\mu'$ such that 
all reals in some set $Q$ with $\muast(Q)=w_e>0$ are $\mu'$-random.
\end{proof}

Let $H$ be a partial computable predicate such that for every basic open set $B$ of $\MM$
and every $e$ such that $\mu_e$ is total, we have $H(B,e)\de$ if and only if
$\mu_e\not\in B$.\footnote{The machine for $H$ starts producing a
sequence of basic open sets $A_s$ converging to $\mu_e$ based on the program $e$,
and stops at the first stage $s$ such that $B\cap A_s=\emptyset$, at which point
it halts.}
Hence
\begin{equation}\label{fOPuEjTPB5}
\parbox{13cm}{if $\mu_e$ is total then,
$\exists n \ H(f^{\ast}(X\restr_n),e)[n]\de\iff\mu_e\neq \lim_n f^{\ast}(X\restr_n)$.}
\end{equation}
where the suffix `[n]' indicates the state of $H$ after $n$ steps of computation.

\textbf{Construction of $\LLast$.}
We let $\LLast$ map the empty string to index 0 and for every other string $\sigma$ 
we define $\LLast(\sigma)$ as follows. If $\sigma\not\in \{2^{h(n)} \ |\ n\in\Nat\}$
 then let $\LLast(\sigma)=\LLast(\tau)$
where $\tau$ is the longest prefix of $\sigma$ in $\{2^{h(n)} \ |\ n\in\Nat\}$.
So it remains to define $\LLast$ in steps, where at step $n$
we define $\LLast$ on all string $\sigma\in 2^{h(n)}$.
Since $f^{\ast}(\sigma)$ is basic open interval in $\MM$ so we may use
\eqref{LhxifOccwG} in order to get a
computable function $\sigma\to\mu_{\sigma}$
from strings to computable measures, such that
for each $\sigma$ the measure $\mu_{\sigma}$ belongs to
$f^{\ast}(\sigma)$.

Given $n$ and $\sigma\in 2^{h(n)}$, 
for each $e$ define 
$\wgt{e}=\mu_{\sigma}(\{\tau\in 2^{n}\ |\ \VV(\tau)=e\})$.
Let $\sigma^-$ denote the first $|\sigma|-1$ bits of  $\sigma$
and define $t=\LLast(\sigma^-)$.
Let $e^{\ast}$ be the least number with the maximum $\wgt{e}$ such that
$H(f^{\ast}(\sigma),\mu_e)[n]\un$; if this does not exist, define
$\LLast(\sigma)=\LLast(\sigma^-)$.
Otherwise, if
one of the following holds
\begin{enumerate}[\hspace{0.5cm}(a)]
\item $\wgt{e^{\ast}}>3\cdot \wgt{t}$  
\item $H(f^{\ast}(\sigma),t)[n]\de$
\end{enumerate}
let $\LLast(\sigma)=e^{\ast}$.  In any other case let $\LLast(\sigma)=\LLast(\sigma^-)$.

{\bf Properties of $\LLast$.}
We show that  \eqref{oPZCcbP3K} holds, \ie that
for each $Z\in \DDast$ the limit 
$\lim_s \LLast(Z\restr_s)$ exists and is an index for $f(Z)$.
Let $Z\in\DDast$, $\muast=f(Z)$ and consider
the sequence of computable measures $\mu_{Z\restr_{h(n)}}\in f^{\ast}(Z\restr_{h(n)})$
that are defined by the function $\sigma\mapsto\mu_{\sigma}$ applied on $Z$, and are
used in the steps $n$ of the definition of $\LLast$ with respect to $Z$.
Let
\[
w^{\ast}_e[n]=\mu_{Z\restr_{h(n)}} \big(\{\sigma\in 2^{n}\ |\ \VV(\sigma)=e)\}\big).
\]
and note that these are the weights used in the definition of $\LLast$ at step $n$
with respect to $Z\restr_{h(n)}$.

\begin{lem}\label{9yjxmo7eek}
For each $e$, 
$w_e=\lim_n w_e[n]=\lim_n w^{\ast}_e[n]$.
\end{lem}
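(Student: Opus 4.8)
The plan is to reduce this entirely to facts already established, since the two asserted equalities are of quite different character. The first equality $w_e=\lim_n w_e[n]$ is nothing but Lemma~\ref{mfqpAdyGd3}, which—as already noted just before the present statement—goes through verbatim under the current hypotheses, its proof invoking only that $\VV$ learns $\mu^{\ast}$ together with the measure-theoretic bookkeeping involving the nested sets $Q_n$ and $P_e[n]$. So the only genuinely new content is the second equality $\lim_n w_e[n]=\lim_n w^{\ast}_e[n]$.

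For that, first I would observe that both $\mu^{\ast}=f(Z)$ and $\mu_{Z\restr_{h(n)}}$ lie in the same basic open set $f^{\ast}(Z\restr_{h(n)})$ of $\MM$: the former because $f(Z)\in f^{\ast}(Z\restr_s)$ for every $s$ by the defining property of a computable $f:\twome\to\MM$ (Definition~\ref{9hsw3pE4Wp}), and the latter by the very choice of $\mu_{Z\restr_{h(n)}}$ via \eqref{LhxifOccwG}. Then I would rewrite each weight as a sum over strings, $w_e[n]=\sum_{\sigma\in 2^n,\ \VV(\sigma)=e}\mu^{\ast}(\sigma)$ and $w^{\ast}_e[n]=\sum_{\sigma\in 2^n,\ \VV(\sigma)=e}\mu_{Z\restr_{h(n)}}(\sigma)$, so that their difference is dominated term-by-term by the full sum $\sum_{\sigma\in 2^n}|\mu^{\ast}(\sigma)-\mu_{Z\restr_{h(n)}}(\sigma)|$. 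Applying \eqref{S72AlHj9Wf} with $\mu=\mu^{\ast}$ and $\nu=\mu_{Z\restr_{h(n)}}$ then yields the bound $|w_e[n]-w^{\ast}_e[n]|<2^{-n}$.

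Finally, I would combine the two pieces: since $|w_e[n]-w^{\ast}_e[n]|<2^{-n}\to 0$ and $\lim_n w_e[n]=w_e$ exists by the first part, the sequence $(w^{\ast}_e[n])_n$ converges to the same limit $w_e$, which is exactly the chain of equalities asserted.

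I do not expect any serious obstacle here; this lemma is a faithful copy of the short computation performed immediately after Lemma~\ref{mfqpAdyGd3} in Section~\ref{g2uXHFLpP}, recorded separately only because the subsequent argument of the extension will reuse it. The single point meriting care—and the crux, such as it is—is the verification that $f(Z)$ genuinely belongs to the ball $f^{\ast}(Z\restr_{h(n)})$ for every $n$, so that \eqref{S72AlHj9Wf} may legitimately be applied to the pair $(\mu^{\ast},\mu_{Z\restr_{h(n)}})$; this is guaranteed by the monotonicity and convergence clauses of Definition~\ref{9hsw3pE4Wp}.
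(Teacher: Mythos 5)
Your proposal is correct and matches the paper's own proof: the first equality is cited directly from Lemma~\ref{mfqpAdyGd3}, and the second is obtained from the bound $|w_e[n]-w^{\ast}_e[n]|<2^{-n}$ given by \eqref{S72AlHj9Wf}. Your additional verification that both $\mu^{\ast}$ and $\mu_{Z\restr_{h(n)}}$ lie in $f^{\ast}(Z\restr_{h(n)})$ is exactly the (implicit) justification the paper relies on.
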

\begin{proof}
The first equality is Lemma \ref{mfqpAdyGd3}.
From \eqref{S72AlHj9Wf} we get that for each $n,e$,
$|w_e[n]-w^{\ast}_e[n]|< 2^{-n}$, which establishes the second equality.
\end{proof}

Next, we show that  $\lim_s \LLast(Z\restr_s)$ exists.
Let $H_e[n]$ denote $H(f^{\ast}(Z\upharpoonright_{h(n)}), e)[n]$. 
Let $T = \{e\colon e \text{ is an index of } \mu^{\ast} \} $
 and $m$ be some index such that $w_m = \max\{w_e\colon e \in T \}$.
By Lemma 3.11 $w_m > 0$.
By \eqref{fOPuEjTPB5}, $e \in T$ if and only if for all $n$, $H_e[n] \uparrow$.
\begin{lem}\label{Z4z2obsXy}
There exists $n_0$ such that for all $n\ge n_0$ and all $e$, 
\begin{enumerate}[\hspace{0.5cm}(i)]
\item $|w_e - w^{\ast}_e[n]|< w_m/5$. 
\item If $w_e > 4 w_m/5$ and $e \notin T$ then $H_e [n] \downarrow$.
\end{enumerate}
\end{lem}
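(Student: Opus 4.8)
The plan is to prove the two clauses separately, reusing the estimates of Section~\ref{g2uXHFLpP} for clause (i) and handling the genuinely new ingredient, the halting of $H$ in clause (ii), by a finiteness argument. Throughout I work with the fixed $Z\in\DDast$, $\muast=f(Z)$, and the weights $w_e,w^{\ast}_e[n]$ already introduced.

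For clause (i) I would argue verbatim as in Lemma~\ref{MsTBC1NH4a}. That proof used only three facts: that $\sum_e w_e=1$, that $0<w_m\le 1$, and that $\lim_n w^{\ast}_e[n]=w_e$ for every $e$. The last is Lemma~\ref{9yjxmo7eek}, and $w_m>0$ is Lemma~\ref{FDNNq7nVa5}. The one point needing fresh justification is $\sum_e w_e=1$, since here we no longer have orthogonality. It follows instead from clause (a) of Definition~\ref{w2WqzOBoC5}: weak EX-learnability via $\VV$ guarantees that $\lim_n\VV(X\restr_n)$ exists for every $\muast$-random $X$, hence on a set of $\muast$-measure $1$, and as the events $\{X:\lim_n\VV(X\restr_n)=e\}$ are pairwise disjoint their measures $w_e$ sum to $1$. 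With $\sum_e w_e=1$ and $w_m>0$ in hand, I would pick $e_0$ with $\sum_{e<e_0}w_e>1-w_m/20$, choose $n_0$ so that $\sum_{e<e_0}|w_e-w^{\ast}_e[n]|<w_m/20$ for all $n\ge n_0$, bound the head indices $e<e_0$ directly, and bound the tail indices $e\ge e_0$ via $w_e<w_m/20$ and $w^{\ast}_e[n]<w_m/10$, obtaining $|w_e-w^{\ast}_e[n]|<w_m/5$ in all cases.

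For clause (ii) the key observation is that $S=\{e:w_e>4w_m/5\}$ is finite: the $w_e$ are nonnegative and sum to $1$, so $|S|\le\lfloor 5/(4w_m)\rfloor$, which is finite since $w_m>0$. Now fix $e\in S\setminus T$. Then $w_e>4w_m/5>0$, so by the second clause of Lemma~\ref{FDNNq7nVa5} the measure $\mu_e$ is total, while $e\notin T$ gives $\mu_e\neq\muast=\lim_n f^{\ast}(Z\restr_{h(n)})$. Applying \eqref{fOPuEjTPB5} to the approximation $f^{\ast}(Z\restr_{h(n)})$ of $\muast$, there is some $n$ with $H_e[n]\de$. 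Moreover $H_e[n]\de$ persists in $n$: increasing the step count cannot undo a halt, and the input balls $f^{\ast}(Z\restr_{h(n)})$ shrink to $\muast$ (their sizes tend to $0$), so once $\mu_e$ is excluded from one of them it remains excluded. Hence for each $e\in S\setminus T$ there is a threshold $n_e$ with $H_e[n]\de$ for all $n\ge n_e$; since $S\setminus T$ is finite, taking the maximum of these thresholds together with the $n_0$ from clause (i) produces a single $n_0$ that serves both clauses.

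I expect clause (ii) to be the main obstacle, precisely in the bookkeeping around $H$. Two points require care. First, that positive weight forces $\mu_e$ to be total: this is exactly where the partial-learnability hypothesis of Definition~\ref{w2WqzOBoC5} enters, through Lemma~\ref{FDNNq7nVa5}, and without it the predicate $H$ need not be well-behaved at $e$. Second, the persistence of $H_e[n]\de$ under the simultaneous change of the input ball $f^{\ast}(Z\restr_{h(n)})$ and the growth of the step count $n$, which rests on the monotonicity of $f^{\ast}$ along the prefix chain so that the relevant balls are nested. The finiteness of $S$ is what converts these per-index eventual halts into a uniform bound; the remaining estimates are a transcription of the EX-case argument.
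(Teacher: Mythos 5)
Your proof is correct and follows essentially the same route as the paper's: clause (i) is the verbatim estimate from Lemma~\ref{MsTBC1NH4a} (head/tail split at an $e_0$ with $\sum_{e<e_0}w_e>1-w_m/20$), and clause (ii) rests on the finiteness of $\{e: w_e>4w_m/5\}$ plus an eventual, persistent halt of $H_e$ for each such $e\notin T$, with the final $n_0$ a maximum over finitely many thresholds. The extra details you supply (deriving $\sum_e w_e=1$ from weak EX-learnability, totality of $\mu_e$ via Lemma~\ref{FDNNq7nVa5}, and persistence of $H_e[n]\de$ from the nesting of the balls $f^{\ast}(Z\restr_{h(n)})$) are points the paper leaves implicit, and they check out.
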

\begin{proof}
Since $\sum_e w_e =1$ and $0 < w_m \le 1$, then there exists $e_0$ 
such that $\sum_{e < e_0} w_e > 1 - w_m/20$.
We also have that for all $e$, $\lim_n w^{\ast}_e[n] = w_e$, 
so there exists 
$n^{\ast}$ such that for all $n\ge n^{\ast}$, $\sum_{e < e_0} |w_e - w^{\ast}_e[n]|< w_m/20$.
Then for $e < e_0$, it is clear that for all $n\ge n^{\ast}$, $|w_e - w^{\ast}_e[n]|< w_m/5$.
On the other hand, we have $\sum_{e \ge e_0} w_e = 1- \sum_{e < e_0} w_e < w_m/20$.
Moreover, for all $n\ge n^{\ast}$, 
\[
\sum_{e \ge e_0} w^{\ast}_e[n] = 1- \sum_{e < e_0} w^{\ast}_e[n] \le 1 - 
\Big(\sum_{e < e_0} w_e  - w_m/20\Big) <  w_m/10.
\]
So for all $e \ge e_0$, $0 \le w_e< w_m/20$ and $0 \le w^{\ast}_e[n] < w_m/10$, and thus, $|w_e - w^{\ast}_e[n]|< w_m/5$.
If $w_e > 4 w_m/5$, 
it must be case that $e < e_0$, and thus, there are only finitely many such indices $e$.
For every such index $e$, if $e\notin T $, there will be some 
$k_e$ such that for all $n \ge k_e$, $H_e [n] \downarrow$.
If $n_0$ is the largest number amongst $k_e$ and $n^{\ast}$, clauses (i) and (ii) in the
statement of the lemma hold. 
\end{proof}

Let us now fix the constant $n_0$ of Lemma \ref{Z4z2obsXy}.

\begin{lem}[The limit exists]\label{onLX1pUewW}
The value of $\mathcal{L}^{\ast}(Z\upharpoonright _n)$ will converge to some index $i\in T$. 
\end{lem}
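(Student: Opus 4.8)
The plan is to follow the structure of the ``limit exists'' lemma of Section~\ref{g2uXHFLpP}, but to control the new phenomenon that, without the orthogonality hypothesis, there can be indices $e$ with $w_e>0$ that are \emph{not} indices of $\muast$ (call these imposters). The device that neutralizes them is the predicate $H$: by \eqref{fOPuEjTPB5} together with Lemma~\ref{FDNNq7nVa5}, every imposter $e$ with $w_e>0$ has $\mu_e$ total and distinct from $\muast$, hence is eventually flagged ($H_e[n]\de$ from some point on), whereas every $e\in T$ is never flagged. I would first record the facts that drive everything. Fix the constant $n_0$ of Lemma~\ref{Z4z2obsXy}. Then for all $n\ge n_0$ the index $m\in T$ is an unflagged candidate with $w^{\ast}_m[n]>4w_m/5$, and any index selected as $e^{\ast}$ at such a stage satisfies $w^{\ast}_{e^{\ast}}[n]\ge w^{\ast}_m[n]>4w_m/5$, whence $w_{e^{\ast}}>3w_m/5$ by clause (i) of Lemma~\ref{Z4z2obsXy}. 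In particular, only finitely many indices can ever be selected, namely those in the finite set $S=\{e\colon w_e>3w_m/5\}$.

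The key preliminary step is to pick a stage $N^{\ast}\ge n_0$ past which every imposter in $S$ is flagged: since $S\setminus T$ is finite and each of its members is eventually flagged, such an $N^{\ast}$ exists. Then for every $n\ge N^{\ast}$ the selected index $e^{\ast}$ is unflagged and lies in $S$, so it must lie in $T$. This is the crucial gain over Section~\ref{g2uXHFLpP}: after stage $N^{\ast}$ the learner can only ever jump to a genuine index of $\muast$.

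Next I would prove a persistence claim: if at some stage $n_1\ge N^{\ast}$ the learner sets its value to $e_1\in T$, then it never changes again. Indeed, $e_1\in T$ is never flagged, so condition (b) never fires with current guess $e_1$; and since $e_1$ was selected at a stage $\ge n_0$ we have $w_{e_1}>3w_m/5$, hence $w^{\ast}_{e_1}[n]>2w_m/5$ for $n\ge n_0$, so $3w^{\ast}_{e_1}[n]>6w_m/5$, while any new candidate $e^{\ast}\in T$ has $w_{e^{\ast}}\le w_m$ and hence $w^{\ast}_{e^{\ast}}[n]<6w_m/5$; thus condition (a) fails as well. Consequently there is at most one change of value after $N^{\ast}$.

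Finally I would split into two cases on the behaviour after $N^{\ast}$. If the value ever changes after $N^{\ast}$, say at the least such stage $n_1$, then it is set to some $e_1\in T$, and by persistence $\lim_n\LLast(Z\restr_n)=e_1\in T$. If instead the value is constant, equal to $t^{\ast}$, for all $n\ge N^{\ast}$, I would derive a contradiction from $t^{\ast}\notin T$: if $w_{t^{\ast}}>0$ then $\mu_{t^{\ast}}$ is total and distinct from $\muast$, so $t^{\ast}$ is eventually flagged and condition (b) forces a change (the candidate $e^{\ast}$ exists because $m$ is always available and unflagged); if $w_{t^{\ast}}=0$ then $w^{\ast}_{t^{\ast}}[n]<w_m/5$ for $n\ge n_0$, while $w^{\ast}_{e^{\ast}}[n]>4w_m/5>3w^{\ast}_{t^{\ast}}[n]$, so condition (a) forces a change. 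Either way constancy is contradicted, so $t^{\ast}\in T$. The main obstacle is exactly this bookkeeping: because orthogonality is dropped, heavy imposters exist and may be transiently selected, and the argument must simultaneously use the weight-gap rule (a) to escape underweight guesses, the flag rule (b) to escape imposters, and the finiteness of $S$ together with the timing $N^{\ast}$ to guarantee that escapes eventually land, and stay, inside $T$.
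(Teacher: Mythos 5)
Your proof is correct and follows essentially the same route as the paper's: a case split on whether the guess changes after a fixed threshold stage, with the weight-gap rule (a) escaping underweight guesses, the flag rule (b) escaping imposters, and the quantitative bounds of Lemma~\ref{Z4z2obsXy} controlling both. Your extra bookkeeping --- the uniform flagging stage $N^{\ast}$ for the finite set $S$, and the explicit sub-case $w_{t^{\ast}}=0$ handled by rule (a) rather than by flagging --- is if anything slightly more careful than the paper's version, which applies clause (ii) of Lemma~\ref{Z4z2obsXy} at the threshold $3w_m/5$ rather than the stated $4w_m/5$ and asserts that a stuck non-index $e_0$ is eventually flagged without first separating out the case $w_{e_0}=0$, where $\mu_{e_0}$ need not be total.
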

\begin{proof}
Let $\mathcal{L}^{\ast}(Z\upharpoonright _{h(n_0)}) = e_0 $.
In the case that there is some $n\ge h(n_0)$ with $\mathcal{L}^{\ast}(Z\upharpoonright _{n}) \neq e_0$, there should be some $n_1 > n_0$ 
such that $\mathcal{L}^{\ast}(Z\upharpoonright _{h(n_1)}) = e_1 \neq e_0$.
It then follows from the construction of $\mathcal{L}^{\ast} $ that $w^{\ast}_{e_1}[n_1] \ge w^{\ast}_m[n_1] > 4w_m/5$ and $H_{e_1}[n_1] \uparrow $.
Then by Lemma~\ref{Z4z2obsXy} for all $n\ge n_1$, 
\[
w^{\ast}_{e_1}[n] > w_{e_1} - w_m/5 > w^{\ast}_{e_1}[n_1] - 2w_m/5 = 2w_m/5
\] 
and $e_1\in T$.
On the other hand if $e \in T$, 
for all $n\ge n_1$ we have $w^{\ast}_e[n] < w_e + w_m/5 \le 6w_m/5 < 3w^{\ast}_{e_1}[n]$.
If $e \notin T$ but $w_e[n] > 6 w_m/5$, then $w_e > w_m$, 
so for all $n\ge n_1$ we have $H_e [n] \downarrow$.
This means that after step $n_1$ the value of $\mathcal{L}^{\ast}(Z\upharpoonright _n)$ will not change and thus, $\lim _n \mathcal{L}^{\ast}(Z\upharpoonright _n) = e_1 \in T$.
In the case that for all $n\ge h(n_0)$ we have $\mathcal{L}^{\ast}(Z\upharpoonright _n) = e_0$,  
we only need to show that $e_0 \in T$.
Assuming that $e_0 \notin T$, there exists some step $n_2\ge n_0$ such that 
$H_{e_0} [n_2]\downarrow$.
Since $m\in T$, for all  $n\ge n_0$ we have $H_m [n]\uparrow$.
By the construction of $\mathcal{L}^{\ast}$ the value of 
$\mathcal{L}^{\ast}(Z\upharpoonright _{h(n_2)})$ will change. This is a contradiction.
\end{proof}

The above lemma concludes the proof of Theorem \ref{2dbc2Z3Im}.

\subsection{Proof of Theorem \ref{WVmrKGDjbs}}\label{In6hhYHW}
It is well known that if $Z$ is computable and $\mu$-random 
for some computable measure $\mu$,
then $Z$ is an atom of $\mu$ and $\mu(Z\restr_n\ast Z(n))/\mu(Z\restr_n)$ tends to 1. Here is a generalization.

\begin{lem}\label{UfrLEbKHaN}
If $Z$ is computable and
$Z\oplus Y$ is $\mu$-random for some computable measure $\mu$,
then 
$\mu(Z\restr_n\oplus Y\restr_n \ast Z(n))/\mu(Z\restr_n\oplus Y\restr_n)\to 1$
as $n\to\infty$.
\end{lem}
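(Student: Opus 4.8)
The plan is to interpret the statement through the lens of algorithmic randomness with respect to a computable measure, using the Kolmogorov complexity characterization reviewed in Section~\ref{TTprzlKboB}. Since $Z\oplus Y$ is $\mu$-random, there is a constant $c$ such that for all $n$ we have $K((Z\oplus Y)\restr_n) > -\log\mu((Z\oplus Y)\restr_n) - c$. The key observation is that $Z$ is computable, so its initial segments have complexity bounded by $2\log n + O(1)$; the ``information'' in $(Z\oplus Y)\restr_n$ therefore comes almost entirely from $Y$. The strategy is to show that the quantity $\mu((Z\oplus Y)\restr_n\ast Z(n))/\mu((Z\oplus Y)\restr_n)$ cannot be bounded away from $1$ infinitely often, because each time it drops below $1-\epsilon$ the measure ``spent'' on the bit $1-Z(n)$ represents wasted measure that, accumulated, would drive the complexity of $(Z\oplus Y)\restr_n$ too far below $-\log\mu$, contradicting randomness.

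First I would set up notation: write $\sigma_n = (Z\oplus Y)\restr_{2n}$ for the length-$2n$ prefix encoding $Z\restr_n\oplus Y\restr_n$, and let $r_n = \mu(\sigma_n\ast Z(n))/\mu(\sigma_n)$ be the conditional probability that the next ($Z$-coordinate) bit agrees with the computable sequence $Z$. Note $r_n$ is the ratio appearing in the statement. The goal is $r_n\to 1$. I would argue by contradiction: suppose there is $\epsilon>0$ and infinitely many $n$ with $r_n\le 1-\epsilon$. The plan is to build a $\mu$-test, or equivalently exhibit a complexity drop, that captures $Z\oplus Y$, contradicting its randomness. Concretely, consider the set $S$ of strings $\tau$ extending some prefix of $Z\oplus Y$ that ``predict correctly'' that the relevant coordinate equals $Z$: because $Z$ is computable, the predictor knowing $Z$ can, at each such level, bet on the outcome $Z(n)$, and whenever $r_n\le 1-\epsilon$ this bet multiplies the available capital by a factor at least $1/(1-\epsilon)>1$ relative to the martingale $\mu$.

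The cleanest route is via the savings/martingale characterization. I would define a $\mu$-supermartingale $M$ that, using the computable sequence $Z$ as advice, bets the fraction $Z(n)$-correctly only on the $Z$-coordinates and passes on the $Y$-coordinates. Since at a $Z$-coordinate the two successors have $\mu$-conditional masses $r_n$ and $1-r_n$, betting everything on the $Z(n)$-side yields a multiplicative gain of $1/r_n$ along $Z\oplus Y$. If $r_n\le 1-\epsilon$ infinitely often, then $\prod_n 1/r_n=\infty$ along $Z\oplus Y$, so $M$ succeeds on $Z\oplus Y$; since $M$ is a computable $\mu$-supermartingale (computable because $Z$ is computable and $\mu$ is computable), this contradicts the $\mu$-randomness of $Z\oplus Y$ by the martingale characterization of \ml randomness for computable measures. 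This is the heart of the argument; the main obstacle will be handling the measure-zero bookkeeping carefully, namely ensuring $M$ stays a genuine supermartingale (it only ever passes or bets the full capital on one side, so $M(\tau)=M(\tau\ast0)\mu(\tau\ast0)/\mu(\tau)+\cdots$ is respected) and that the $Y$-coordinates, where we do not bet, do not cause $M$ to lose capital.

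I expect the main technical obstacle to be the exact definition of the betting strategy on the product space so that $M$ is uniformly computable and remains a valid $\mu$-supermartingale while achieving unbounded gain precisely when the ratios $r_n$ fail to converge to $1$. A secondary subtlety is that $r_n\to 1$ is a statement about all sufficiently large $n$, not just a limsup condition; once the martingale argument rules out $r_n\le 1-\epsilon$ infinitely often for each fixed rational $\epsilon>0$, I would take the union over a computable sequence of shrinking $\epsilon$'s (or simply note that the negation of $r_n\to1$ supplies some fixed $\epsilon$ with infinitely many violations) to conclude $\liminf_n r_n=1$, and since $r_n\le 1$ always, this yields $r_n\to 1$ as required. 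The computability of $Z$ is used essentially twice: to make the martingale computable, and to bound away the contribution of $Z$'s own initial-segment complexity so that the gain of $M$ is not offset by any prediction cost.
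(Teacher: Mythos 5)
Your proposal is correct in its essential idea but packages it differently from the paper. Both arguments exploit the same quantitative fact: each time the conditional ratio $r_n=\mu(\sigma_n\ast Z(n))/\mu(\sigma_n)$ drops below $1-\epsilon$, a fixed multiplicative factor is gained, and infinitely many such drops are incompatible with $\mu$-randomness. The paper realizes this directly as a \ml $\mu$-test: it takes the contrapositive, defines $V_t$ to be the (prefix-free) set of minimal positions witnessing $t$ violations of the bound $q$, proves $\mu(V_{t+1})<q\cdot\mu(V_t)$, and extracts a test $(V_{m_j})$ covering $Z\oplus Y$. Your supermartingale that bets all capital on the $Z$-coordinate bit $Z(n)$ and passes on the $Y$-coordinates is precisely the martingale dual of that test ($V_t$ is essentially the set of positions where your capital has multiplied by $(1/q)^t$). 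What the paper's route buys is that it never divides by $\mu(\sigma_n\ast Z(n))$ and never needs the martingale characterization of \ml randomness for arbitrary computable measures; your route needs both. In particular, your claim that $M$ is \emph{computable} is the one soft spot: $\mu(\tau)/\mu(\tau\ast Z(n))$ is not uniformly computable since the denominator may be zero or arbitrarily small, so you should either work with a lower semicomputable (c.e.) $\mu$-supermartingale --- for which success still implies non-\ml-randomness --- or first dispose of null cylinders (a prefix of $\mu$-measure zero already makes $Z\oplus Y$ non-random) and restrict to the positive-measure subtree. With that repair, and the observation you already make that failure of $r_n\to 1$ supplies a single $\epsilon$ with infinitely many violations (so no union over shrinking $\epsilon$ is needed), your argument goes through.
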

\begin{proof}
We prove the contrapositive: fix computable $\mu,Z$, and suppose that for some $Y$
there exists a rational $q\in (0,1)$ such that
\begin{equation}\label{2K8zd6Q3sC}
\mu((Z\restr_n\oplus Y\restr_n) \ast Z(n))<q\cdot \mu(Z\restr_n\oplus Y\restr_n)
\end{equation}
for infinitely many $n$.
For each $t$ consider the set $V_t$ of the strings
of the form $(Z\restr_j\oplus X\restr_j)\ast Z(j)$ for some $j,X$, such that $j$ is minimal
with the property that there exist at least $t$ many $n\leq j$ with \eqref{2K8zd6Q3sC} by replacing $Y$ with $X$.
For each 
nonempty string
$\sigma$, let $\sigma^{-}$ denote the largest proper prefix of $\sigma$.
By the minimality of the choice of $n$ above, we have that  (a) $V_t$ is prefix-free;
(b) each
string $\tau\in V_{t+1}$ extends a string $\sigma\in V_t$;
(c) if $\sigma\in V_t$ then $\mu(\sigma)<q\cdot \mu(\sigma^{-})$;
(d) if $V_{t+1}(\sigma)$ is the set of all the strings in $V_{t+1}$ extending $\sigma\in V_t$ then $ \mu(V_{t+1}(\sigma)) < q\cdot \mu(\sigma)$.
%
It follows that $\mu(V_{t+1})<q\cdot \mu(V_{t})$ so there exists
a computable sequence $(m_j)$ such that $\mu(V_{m_j})<2^{-j}$ for each $j$.
So $(V_{m_j})$ is a $\mu$-test and by its definition, if $Y$ satisfies \eqref{2K8zd6Q3sC}
for infinitely many $n$, then $Z \oplus Y$ has a prefix in $V_t$ for each $t$, and so in 
$V_{m_j}$ for each $j$. Hence in this case $Z\oplus Y$ is not $\mu$-random. 
\end{proof}

For each $Z$ define $\mu_Z$ as follows: for each $\sigma$
of odd length
let $\mu_Z(\sigma\ast i)=\mu_Z(\sigma)/2$ for $i=0,1$; for each
$\sigma$ of even length
let $j_{\sigma}=Z(|\sigma|/2)$ and define 
$\mu_Z(\sigma\ast j_{\sigma})=\mu_Z(\sigma)$,
$\mu_Z(\sigma\ast (1-j_{\sigma}))=0$. Hence for each
real $X$ and each $n$, all  $\mu_Z(X\restr_{2n})$ goes to $X\restr_{2n}\ast Z(n)$
while $\mu_Z(X\restr_{2n+1})$ is split equally to $X\restr_{2n+1}\ast 0$ and $X\restr_{2n+1}\ast 1$.
Note that for each $Z$ the measure $\mu_Z$ is continuous. Also, the map
$Z\mapsto\mu_Z$ from $\twome$ to $\MM$ is continuous.
 
 \begin{lem}\label{CG1aUvOCf}
Given any computable $Z$, a real $X$ is $\mu_Z$-random if and only if
it is of the form $Z\oplus Y$ for some random $Y$ with respect to the uniform measure.
\end{lem}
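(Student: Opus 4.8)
Since $Z$ is computable, $\mu_Z$ is a computable measure and \ml randomness for it is defined in the usual way. The plan is to set up a computable, measure-preserving correspondence between the uniform measure on $\twome$ and $\mu_Z$, and then to transfer \ml tests across it in both directions.

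First I would record the shape of $\mu_Z$. By construction $\mu_Z(\sigma)>0$ if and only if $\sigma(2i)=Z(i)$ for every $i$ with $2i<|\sigma|$ (call such $\sigma$ \emph{consistent}), and in that case $\mu_Z(\sigma)=2^{-\floor{|\sigma|/2}}$, since only the odd positions are ever split. Hence the support of $\mu_Z$ is exactly $\{Z\oplus Y\ |\ Y\in\twome\}$, while its complement $\{X\ |\ \exists i,\ X(2i)\neq Z(i)\}$ is effectively open (as $Z$ is computable) and $\mu_Z$-null; taking this set as each component of a \ml test shows that every $\mu_Z$-random real is already of the form $Z\oplus Y$. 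Writing $Z\oplus\rho$ for the length-$2|\rho|$ string that interleaves $Z$ at the even and a finite string $\rho$ at the odd positions, consistency gives the key identity $\mu_Z(\dbra{Z\oplus\rho})=2^{-|\rho|}$, which is exactly the uniform measure of $\dbra{\rho}$. Thus the interleaving map $\Phi:Y\mapsto Z\oplus Y$ pushes the uniform measure forward onto $\mu_Z$, and on the support it is inverted by the computable odd-bit projection $\pi$ sending a consistent string to its odd-indexed bits.

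For the direction that $\mu_Z$-randomness of $X$ implies $X=Z\oplus Y$ with $Y$ uniformly random, the support argument already gives $X=Z\oplus Y$, so it remains to show $Y$ is uniformly random. Suppose not, and fix a \ml test $(V_i)$ for the uniform measure with $Y\in\bigcap_i\dbra{V_i}$ and $\dbra{V_i}$ of uniform measure at most $2^{-i}$. Setting $U_i=\{Z\oplus\rho\ |\ \rho\in V_i\}$, the family $(U_i)$ is uniformly c.e.\ because $Z$ is computable, the identity yields that $\mu_Z(\dbra{U_i})$ equals the uniform measure of $\dbra{V_i}$ and so is at most $2^{-i}$, and $\rho\preceq Y$ forces $Z\oplus\rho\preceq Z\oplus Y=X$, so $X\in\bigcap_i\dbra{U_i}$. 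This contradicts the $\mu_Z$-randomness of $X$. For the converse, let $X=Z\oplus Y$ with $Y$ uniformly random; I would run the same transfer backwards through $\pi$. Assuming a \ml $\mu_Z$-test $(U_i)$ covers $X$, discard from each $U_i$ the (measure-zero) inconsistent strings and set $V_i=\pi(U_i)$. Because $\Phi$ is a measure isomorphism onto the support with inverse $\pi$, the uniform measure of $\dbra{V_i}$ equals $\mu_Z(\dbra{U_i})\le 2^{-i}$; the family is uniformly c.e.\ since consistency with a computable $Z$ is decidable and $\pi$ is computable; and the prefix of $X$ in $U_i$ projects to a prefix of $Y$ in $V_i$, so $Y\in\bigcap_i\dbra{V_i}$, contradicting the uniform randomness of $Y$.

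The step I expect to require the most care is the exact measure matching, namely that the uniform measure of the transferred open set equals the $\mu_Z$-measure of the original, rather than merely being bounded by a sum $\sum_\sigma 2^{-|\pi(\sigma)|}$ over generators (incomparable consistent strings could otherwise project to overlapping cylinders if one is careless). The clean way around this is to argue measure-theoretically: one checks $\dbra{U_i}\cap\mathrm{supp}(\mu_Z)=\Phi(\dbra{V_i})$, so that $\mu_Z(\dbra{U_i})=\mu_Z(\dbra{U_i}\cap\mathrm{supp}(\mu_Z))$ equals the uniform measure of $\dbra{V_i}$ by the pushforward identity, and symmetrically in the other direction; this sidesteps any need to refine the test to a prefix-free generating set. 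Verifying that this transport is also effective, so that $(U_i)$ and $(V_i)$ are genuinely uniformly c.e., is then routine and uses only that $Z$ is computable.
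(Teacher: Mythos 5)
Your proof is correct and follows essentially the same route as the paper: transfer Martin--L\"of tests across the interleaving correspondence $Y\mapsto Z\oplus Y$ in both directions, using the identity $\mu_Z(\dbra{Z\oplus\rho})=2^{-|\rho|}=\lambda(\dbra{\rho})$, and dispose of reals not of the form $Z\oplus Y$ via the null, effectively open complement of the support. Your extra care about discarding inconsistent strings and about the exact measure matching is a welcome tightening of a point the paper glosses over, but it does not change the argument.
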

\begin{proof}
``$\Rightarrow$: ''
If  $X$ is of the form $W\oplus Y$ for some $W\neq Z$ then by the definition of $\mu_Z$
we have
$\mu_Z((W\oplus Y)\restr_n)=0$ for sufficiently large $n$, so
$W\oplus Y$ is not $\mu_Z$-random.
If $X$ is of the form $Z\oplus Y$ and $Y$ is not random with respect to the 
uniform measure $\lambda$,
let $(V_i)$ be a $\lambda$-test such that $Y\in \cap_i\dbra{V_i}$.
For each $i$ let
$U_i=\{Z\restr_{|\sigma|}\oplus\sigma \ |\ \sigma\in V_i\}$. By the definition of $\mu_Z$ we
have $\mu_Z(U_i)=\lambda(V_i)\leq 2^{-i}$ so $(U_i)$ is a $\mu_Z$-test.
Since $Y\in \cap_i\dbra{V_i}$ we have $Z\oplus Y\in  \cap_i\dbra{U_i}$ hence
$Z\oplus Y$ is not $\mu_Z$-random.\\
``$\Leftarrow$: ''
If $Z\oplus Y$ is not $\mu_Z$-random, then there is a $\mu_Z$-test $(U_i)$ such that $Z\oplus Y\in  \cap_i\dbra{U_i}$.
For each $i$ let $V_i=\{\sigma(1)\sigma(3)\cdots\sigma(2n-1)\ |\ \sigma\in U_i \text{ and } n=\lfloor |\sigma|/2\rfloor  \}$.
By the definition of $\mu_Z$ we have $\lambda(V_i) = \mu_Z(U_i) \leq 2^{-i}$ and $Y\in \cap_i\dbra{V_i}$.
So $Y $ is not random with respect to the uniform measure.
\end{proof}
Hence if $Z\neq X$ are computable,  
the measures $\mu_Z, \mu_X$ are effectively orthogonal.
Then the `only if' direction of Theorem \ref{WVmrKGDjbs}
follows from the `only if' direction of  Theorem \ref{d3lcvKUJi}
(with $\DD:=\twome$ and $\DDast:=\CC$). 
The following concludes the proof of Theorem \ref{WVmrKGDjbs}. 
\begin{lem}
For each class $\CC$ of computable reals, if
$\{\mu_Z\ |\ Z\in\CC\}$ is a weakly EX/BC learnable class of measures then 
$\CC$ is EX/BC learnable.
\end{lem}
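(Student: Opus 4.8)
The plan is to prove this remaining (``if'') direction of Theorem~\ref{WVmrKGDjbs} by recovering $Z$ from whatever measures the weak learner outputs, through the \emph{even-bit pushforward}. For a measure $\nu$ let $\nu^{\flat}(\eta)=\sum_{\rho\in 2^{|\eta|}}\nu(\eta\oplus\rho)$ be its pushforward under the map extracting the bits in even positions; this is a measure on $\twome$, computable uniformly from an index of a total $\nu$. Its relevance comes from Lemma~\ref{CG1aUvOCf}: if $\VV$ converges along some $\mu_Z$-random $X=Z\oplus Y$ to an index of a computable $\nu$ with $X$ being $\nu$-random, then $X$ lies in the $\Pi^0_1$ class $E_Z:=\{W\ |\ \forall k,\ W(2k)=Z(k)\}$ (using that $Z$ is computable); a $\Pi^0_1$ class containing a $\nu$-random real has positive $\nu$-measure, so $\nu^{\flat}(\{Z\})=\nu(E_Z)=:c>0$ and $Z$ is an atom of $\nu^{\flat}$. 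Since then $\nu^{\flat}(Z\restr_{j})$ decreases to $c$, the sibling masses $\nu^{\flat}(Z\restr_j\ast(1-Z(j)))$ tend to $0$; writing $G(e,p)$ for the real produced by the greedy atom-search on $\mu_e^{\flat}$ started at a prefix $p$ (repeatedly extend by the heavier child), there is a threshold $k_1(e)$ with $G(e,Z\restr_\ell)=Z$ for every $\ell\ge k_1(e)$, regardless of any other atoms of $\mu_e^{\flat}$, because started past the relevant branch points the search stays on $Z$'s subtree. This is the recovery mechanism both cases use (it is the pushforward reading of Lemma~\ref{UfrLEbKHaN}).

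For the EX case I first extract a single measure. Running the majority-vote construction of Section~\ref{g2uXHFLpP} verbatim with $\mu^{\ast}:=\mu_Z$ (the map $Z\mapsto\mu_Z$ is computable, and as $\VV$ is a weak EX-learner the limit $\lim_s\VV(X\restr_s)$ exists for $\mu_Z$-almost every $X$, so the weights $w_e=\mu_Z(\{X\ |\ \lim_s\VV(X\restr_s)=e\})$ sum to $1$), Lemmas~\ref{mfqpAdyGd3} and~\ref{MsTBC1NH4a} together with the lemma that the limit exists produce a computable $n\mapsto\LLast(Z\restr_{h(n)})$ converging to an index $i^{\ast}$ with $w_{i^{\ast}}>0$; the only step I drop is the orthogonality clause \eqref{gjDmWibB}, so I do not assert that $i^{\ast}$ indexes $\mu_Z$. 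From $w_{i^{\ast}}>0$ I get that $\mu_{i^{\ast}}$ is total and, by the first paragraph, that $Z$ is an atom of $\mu_{i^{\ast}}^{\flat}$. The real learner then outputs an index of $G(i^{\ast},p)$, where $p$ is maintained by a lazy update: reset $p$ to the currently read $Z\restr_m$ whenever the underlying measure index changes or the bits of $G(i^{\ast},p)$ produced so far are seen to disagree with $Z\restr_m$. Once $i^{\ast}$ and then $p$ (after $|p|\ge k_1(i^{\ast})$) stabilise the output stabilises, and since a wrong $p$ makes $G(i^{\ast},p)$ deviate from $Z$ at a finite, eventually detected stage, there are only finitely many resets; hence the output index converges to an index of $G(i^{\ast},p)=Z$.

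For the BC case a single majority measure need not exist, so I average bitwise. On input $Z\restr_m$ set $k=\lfloor m/2\rfloor$, compute $i_\rho=\VV(Z\restr_k\oplus\rho)$ for every $\rho\in 2^{k}$, and output an index of the program $\mathrm{MAJ}_k$ that computes its $j$-th bit by dovetailing the computations of the $j$-th bit of $G(i_\rho,Z\restr_k)$ over all $\rho$, tallying weight $2^{-k}$ per $\rho$ and returning the first value whose accumulated weight exceeds $1/2$. Put $U_k=\{\rho\in 2^{k}\ |\ G(i_\rho,Z\restr_k)=Z\}$. For $\lambda$-almost every $Y$ the weak BC-learner is, along $Z\oplus Y$, eventually an index of one fixed good measure $\mu'_Y$ (with $Z$ an atom of $(\mu'_Y)^{\flat}$), so $Y\restr_k\in U_k$ for all large $k$; hence $\lambda(U_k)\to 1$, in particular $\lambda(U_k)>1/2$ eventually. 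Every $\rho\in U_k$ has $\mu_{i_\rho}$ total with $G(i_\rho,Z\restr_k)=Z$, so at each $j$ these $\rho$ carry weight $\lambda(U_k)>1/2$, all vote $Z(j)$, and their computations halt; thus the tally for $Z(j)$ crosses $1/2$ (no other value can, the remaining weight being $<1/2$) and $\mathrm{MAJ}_k$ is total with $\mathrm{MAJ}_k=Z$. Therefore for all large $m$ the learner emits an index of $Z$, so $\CC$ is BC-learnable.

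The main obstacle is a trap rather than a calculation: the naive ``enumerate all measure indices and confirm greedy-against-$Z$'' would BC-learn \emph{every} class of computable reals, contradicting $\mathrm{REC}\notin\mathrm{BC}$, the point being that greedy extraction from an arbitrary total measure may be arbitrarily slow, so bounded-step confirmation keeps chasing fast but incorrect candidates. Both constructions sidestep this by feeding only $\VV$'s own outputs into the pushforward: in EX we commit to the single index $i^{\ast}$ delivered by the convergent vote, with no search over indices, and in BC the hypothesis of weak learnability is exactly what forces $\lambda(U_k)>1/2$, so a dovetailed majority lets the correct bit win without ever waiting on the slow or spurious minority. The secondary difficulty, that the extracted measure may have several pushforward atoms, is neutralised either by starting the greedy search from a long enough prefix of $Z$ (EX) or by the majority already pinning down $Z$ bitwise (BC).
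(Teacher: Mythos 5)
Your pushforward idea is a genuinely different route from the paper's: the paper never extracts a single measure index and never forms an even-bit marginal, but recovers $Z$ bit by bit via Lemma~\ref{UfrLEbKHaN}, asking for each candidate bit $j$ that at least $2/3$ of the level-$m$ conditionings $\sigma\mapsto\VV(Z\restr_m\oplus\sigma)$ name measures giving $(Z\restr_m\oplus\sigma)\ast j$ more than half the mass of $(Z\restr_m\oplus\sigma)$. Your BC half is a correct variant of this idea: the claim that $Z$ is an atom of $\nu^{\flat}$ whenever some $\nu$-random real lies in the $\Pi^0_1$ class $E_Z$ is right, the argument that $\lambda(U_k)\to 1$ (hence $>1/2$ eventually) is right, and the dovetailed weighted majority never has to wait on the stalled or wrong minority, so $\mathrm{MAJ}_k$ is total and equals $Z$ for all large $k$.

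The EX half, however, has a genuine gap: the greedy atom search can stall, and your construction does not recover from a stall. ``Extend by the heavier child'' is only a partial procedure --- if $\mu_{i^{\ast}}^{\flat}(q\ast 0)=\mu_{i^{\ast}}^{\flat}(q\ast 1)$ at some node $q=Z\restr_j$ with $j<k_1(i^{\ast})$ (for instance $\mu_{i^{\ast}}^{\flat}=\frac{1}{2}\delta_Z+\frac{1}{2}\delta_{Z'}$ with $Z'$ branching off $Z$ exactly at position $j$), then the comparison never resolves and $G(i^{\ast},p)$ is a \emph{partial} function equal to $Z\restr_j$ and thereafter undefined. Your reset rule fires only when the bits produced so far are \emph{seen to disagree} with $Z$, and a stalled search never disagrees; nothing forces the final $p$ to reach length $k_1(i^{\ast})$, since the majority vote may stabilize, and the last disagreement-driven reset may occur, well before stage $j$. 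So the learner can converge to an index of a non-total function, which is not an index of $Z$, and EX-learning fails. You need a tie-free recovery step --- e.g.\ the threshold test ``$\mu((Z\restr_n\oplus\rho)\ast b)>\mu(Z\restr_n\oplus\rho)/2$'' of Lemma~\ref{UfrLEbKHaN} applied on the full measure together with a majority over the level-$n$ conditionings, exactly as in your own BC construction or the paper's function $g_0$ --- or else a reset rule that also advances $p$ on observed slowness, together with a proof that such resets terminate. The extraction of the single index $i^{\ast}$ via the Section~\ref{g2uXHFLpP} vote without clause \eqref{gjDmWibB} is fine; only the final recovery of $Z$ from $\mu_{i^{\ast}}$ needs repair.
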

\begin{proof}
We first show the EX case.
Fix $\CC$ and let $\VV$ be a learner which EX-succeeds on all
measures in $\{\mu_Z\ |\ Z\in\CC\}$. 
It remains to construct an 
EX-learner $\LL$ for $\CC$.

{\bf Proof idea.} 
Given a computable $Z$, in order to define $\LL(Z\restr_n)$ we 
use $\VV$ on the strings $Z\restr_n\oplus \sigma$, $\sigma\in 2^n$ and take
a majority vote in order to determine $Z(n)$. According to Lemmas \ref{UfrLEbKHaN}
and \ref{CG1aUvOCf}, eventually the correct value of $Z(n)$ will be the $j$
such that $(Z\restr_n\oplus \sigma)\ast j$ gets most of the measure on 
$(Z\restr_n\oplus \sigma)$, with respect to any measure correctly guessed by 
$\VV(Z\restr_n\oplus \sigma)$, for the majority of $\sigma\in 2^n$.

{\bf Construction of $\LL$.} 
First, define a computable $g_0:\twomel\to\Nat$ as follows, taking
a majority vote via $\VV$. For each $Z$, $n$ we define $g_0(Z\restr_n)$
to be an index of the following partial computable real $X$.
For each $m< n$ we let $X(m)=Z(m)$. 
If $m\geq n$, suppose inductively that it has already defined $X\restr_m$.
In order to define $X(m)$, it calculates the measure-indices
$\VV(X\restr_m\oplus\sigma)=e$ for all $\sigma\in 2^m$ and waits until, for some $j\in\{0,1\}$, 
at least $2/3$ these partial computable measures $\mu_e$ have the property
$\mu_e((X\restr_n\oplus\sigma)\ast j)\de>\mu_e(X\restr_n\oplus\sigma)/2$.
If and when this happens it defines  $X(m)=j$.

Fix $Z\in\CC$.
By Lemma \ref{UfrLEbKHaN},  if $\VV$ weakly EX-learns $\mu_Z$,
for all  sufficiently large $n$ the value of $g_0(Z\restr_n)$
will be an index of $Z$ (possibly different for each $n$).
In order to produce a stable guess, define the function 
$\LL:\twomel\to\Nat$ as follows. In order to define
$\LL(Z\restr_n)$, consider the least $n_0\leq n$ such that 
\begin{enumerate}[(i)]
\item at least proportion $2/3$
of the strings $\sigma\in 2^{n}$ have not changed their $\VV$-guess since  $n_0$,
\ie $\VV(Z\restr_i\oplus\sigma\restr_i)=\VV(Z\restr_{n_0}\oplus\sigma\restr_{n_0})$
for all integers $i\in (n_0, n]$;
\item no disagreement between $Z\restr_n$ and the reals defined by the indices 
$\LL(Z\restr_i)$, $i\in (n_0,n)$ has appeared up to stage $n$; formally,
if $(\varphi_e[n])$ is the fixed effective list of all partial computable reals at stage $n$ of the universal computation,
there exists no $j<n$ and $i\in (n_0,n)$ such that $Z(j)\neq \varphi_i(j)[n]$.
\end{enumerate}
Then let  $\LL(Z\restr_n)$ be $g_0(Z\restr_{n_0})$.
Given $Z\in\CC$ we have that $\VV$ weakly learns $\mu_Z$, so
$\VV(Z\restr_n\oplus Y\restr_n)$ converges for almost all $Y$
(with respect to the uniform measure). Hence in this case (i)
will cease to apply for large enough $n$.
Moreover, by the properties of $g_0$, clause (ii) will also cease to
apply for sufficiently large $n$. Hence the $n_0$ in the definitions of
$\LL(Z\restr_n)$ will stabilize for large enough $n$, and
$\LL(Z\restr_n)$ will reach the limit $g_0(Z\restr_{n_0})$ which
is an index for $Z$.

For the BC case, assume instead that $\VV$ 
BC-succeeds on all
measures in $\{\mu_Z\ |\ Z\in\CC\}$. 
We define $g_0$ exactly as above, and the BC-learner $\LL$ 
by $\LL(Z\restr_n)=g_0(Z\restr_n)$.
Given $Z\in\CC$ we have that $\VV$ weakly BC-learns $\mu_Z$, so
for almost all $Y$ (with respect to the uniform measure), 
$\VV(Z\restr_n\oplus Y\restr_n)$ eventually outputs indices
of a computable measure $\mu$ (dependent on $Y\restr_n$)
with the property that $\mu((Z\restr_n\oplus Y\restr_n)\ast Z(n))>2/3\cdot 
\mu(Z\restr_n\oplus Y\restr_n)$.
By the definition of $g_0$, 
this means that for sufficiently large $n$, the value of $\LL(Z\restr_n)$
is an index of $Z$. Hence $\LL$ is a BC-learner for $\CC$.
\end{proof}

\subsection{Proof of Theorem \ref{EHY3duonav}}
 A stage $s$ is called $i$-expansionary if 
$\ell_i[t]<\ell_i[s]$ for all  $i$-expansionary stages $t<s$.
By the padding lemma let $p$ be a computable function such that for each $i,j$ 
we have $\mu_{p(i,j)}\simeq \mu_{i}$ and $p(i,j)<p(i,j+1)$.

Define the $e$th 
randomness deficiency function by setting $d_e(\sigma)$  to be 
$\ceil{-\log \mu_e(\sigma)}-K(\sigma)$ for each string $\sigma$, 
where $K$ is the \pf complexity of $\sigma$.
Define the $e$th
randomness deficiency on a real $X$ as:
$\mathbf{d}_e(X)=\sup_n d_e(X\restr_n)$
where the supremum is taken over the $n$ such that 
$d_e(X\restr_n)\de$. By \citep{leviniandc/Levin84}, if $\mu_e$ is total then 
$X$ is $\mu_e$-random if and only if $\mathbf{d}_e(X)<\infty$.

At stage $s$, we define $\LL(\sigma)$ for each $\sigma$ of length $s$ as follows.
For the definition of  $\LL(\sigma)$ find the least $i$ such that $s$ is $i$-expansionary and
$d_i(\sigma)[s]\leq i$. Then let $j$ be the least
such that $p(i,j)$ is larger than any $k$-expansionary stage $t<|\sigma|$ for any $k<i$
such that $d_k(\sigma\restr_k)[t]\leq k$,
and define $\LL(\sigma_t)=p(i,j)$.

Let $X$ be a real. Note that $\LL(X\restr_n)=x$ for infinitely many $n$, then
$x=p(i,j)$ for some $i,j$, which means that $\mu_i=\mu_x$ is total
and there are infinitely many $x$-expansionary stages as well as infinitely many
$i$-expansionary stages.
This implies that there are at most $x$ many 
$y$-expansionary stages $t$ for any
$y<x$ with $d_y(\sigma\restr_y)[t]\leq y$. Moreover, for each $z>x$
there are at most finitely may $n$ such that $\LL(X\restr_n)=z$. Indeed, for each $z$
if $n_0$ is an $i$-expansionary stage then $\LL(X\restr_n)\neq z$ for all $n>n_0$.
Moreover, if  $\LL(X\restr_n)=x$ for infinitely many $n$, then 
$d_x(X)=d_i(X)\leq i$ and $\mu_i$ is total, so $X$ is $\mu_i$-random.
We have shown that for each $X$ there exists at most one $x$ such that
$\LL(X\restr_n)=x$ for infinitely many $n$, and in this case $\mu_x$ is total
and $X$ is $\mu_x$-random.

It remains to show that if $X$ is $\mu$-random for some computable $\mu$, then
there exists some $x$ such that $\LL(X\restr_n)=x$ for infinitely many $n$.
If $X$ is $\mu_i$-random for some $i$ such that $\mu_i$ is total,
let $i$ be the least such number 
with the additional property that $\mathbf{d}_i(X)\leq i$ (which exists by the padding lemma).
Also let $j$ be the least number such that 
$p(i,j)$ is larger than any stage $t$ which is 
$k$-expansionary for any $k<i$
with $d_k(\sigma\restr_k)[t]\leq k$. Then the construction will define
$\LL(X\restr_n)=p(i,j)$ for each $i$-expansionary stage $n$ after
the last $k$-expansionary stage $t$ for any $k<i$
with $d_k(\sigma\restr_k)[t]\leq k$.
We have shown that $\LL$ partially succeeds on every $\mu$-random $X$ for any
computable measure $\mu$.

\section{Applications}\label{P6hQ2fd9BA}
For the `if' direction of Corollaries \ref{gyGJY8fGL} 
and \ref{P2b9kifIJR} we need the following lemma.
\begin{lem}
If $A$ is high then the class of  
all computable measures and the class of
all computable Bernoulli measures are both EX$[A]$-learnable.
\end{lem}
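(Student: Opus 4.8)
The plan is to give a single construction, in the style of the Adleman--Blum high-oracle learner for reals, reusing the randomness deficiency functions from the proof of Theorem~\ref{EHY3duonav}. Since $A$ is high, I fix (by Martin's theorem) an increasing $g\le_T A$ dominating every total computable function. Recall from the proof of Theorem~\ref{EHY3duonav} the functions $d_e(\sigma)=\ceil{-\log\mu_e(\sigma)}-K(\sigma)$ and $\dd_e(X)=\sup_n d_e(X\restr_n)$, together with Levin's criterion that, when $\mu_e$ is total, $X$ is $\mu_e$-random iff $\dd_e(X)<\infty$. I would run the construction twice: once with the universal enumeration $(\mu_e)$ of all partial computable measures, and once with $(\mu_e)$ replaced by a padded enumeration of the partial computable Bernoulli measures (specified through their partial computable success probabilities, translating outputs back to universal indices). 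In both cases the padding lemma guarantees that, whenever $X$ is random for a measure in the class, that measure has arbitrarily large indices, hence an index $e$ with $\dd_e(X)\le e$.

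The learner $\LL$ reads $X\restr_n$ and, using $g$ to bound the search, outputs the least $e\le n$ such that (i) $\mu_e(\sigma)\de$ within $g(n)$ steps for every $\sigma$ with $|\sigma|\le n$, and (ii) $d_e(X\restr_m)[g(n)]\le e$ for every $m\le n$, where $[s]$ denotes the stage-$s$ approximation, nondecreasing in $s$ since $K$ is approximated from above. If no such $e$ exists, it outputs $0$. This is computable in $A$. The target is the least index $e_0$ with $\mu_{e_0}$ total and $\dd_{e_0}(X)\le e_0$, which exists by the previous paragraph and for which $X$ is $\mu_{e_0}$-random by Levin's criterion.

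The verification has the same shape as Adleman--Blum. Any $e<e_0$ is eventually and permanently discarded: if $\mu_e$ is not total there is a least $\sigma$ with $\mu_e(\sigma)\un$, and for all $n\ge|\sigma|$ clause~(i) fails; if $\mu_e$ is total but $\dd_e(X)>e$ there are $m$ and a finite stage $s_0$ with $d_e(X\restr_m)[s_0]>e$, so, since $g$ is increasing and unbounded, clause~(ii) fails for all large $n$ and stays failed by monotonicity of the approximation. On the other hand $e_0$ survives for all large $n$: the stage by which $\mu_{e_0}$ has converged on all $\sigma$ with $|\sigma|\le n$ is a total computable function of $n$, hence dominated by $g$, so clause~(i) holds; and clause~(ii) holds at every stage because $\dd_{e_0}(X)\le e_0$. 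Thus $\lim_n\LL(X\restr_n)=e_0$, an index of a computable measure making $X$ random, which for the full class is EX-learning in the sense of Definition~\ref{nAemubaidf} (here EX and weak EX coincide, as every witnessing measure is itself a computable measure). For the Bernoulli run $\mu_{e_0}$ is a total Bernoulli measure making $X$ random, so by the effective orthogonality of the Bernoulli measures it is the unique such one, and the Bernoulli class is EX-learned.

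The main obstacle is the domination step: confirming the single least correct index $e_0$ for all large $n$ while ruling out every smaller competitor for good. This is exactly where highness is indispensable, since it converts the $\Pi^0_2$ totality test and the $\Pi^0_1(X)$ deficiency bound into a finitely settling search by bounding the relevant convergence moduli; the monotonicity of the deficiency approximations (from $K$ being approximated from above) is what makes a rejection permanent, and the padded Bernoulli sub-enumeration is what keeps the output inside the Bernoulli class.
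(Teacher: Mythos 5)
Your proposal is correct and follows essentially the same route as the paper: an Adleman--Blum-style search over indices, using the deficiency function $d_e(\sigma)=\ceil{-\log\mu_e(\sigma)}-K(\sigma)$ and Levin's criterion to certify randomness, the high oracle to settle totality of candidate indices, and the reduction of Bernoulli measures to their partial computable success probabilities. The only difference is in which standard guise of highness is used and how the guess is stabilized: the paper takes an $A$-computable limit approximation $h(e,s)$ to the totality predicate and converges by minimizing $cost(e,\sigma)=e+d(g(e),\sigma)[|\sigma|]$, whereas you take a dominating function and converge to the least index $e$ with $\dd_e(X)\le e$ (whose existence you correctly secure by padding) --- equivalent devices yielding the same argument.
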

\begin{proof}
We first show the part for the computable Bernoulli measures.
The function which maps a real $X\in\twome$ to the
measure representation $\mu:\twome\to\mathbb{R}$ of the 
Bernoulli measure with success probability the real in $\mathbb{R}$
with binary expansion $X$ is computable.
Hence, given an effective list $(\mu_e)$ of all partial computable measures in $\MM$
and an effective list $(\varphi_e)$ of all partial computable reals in $\twome$,
there exists a computable function $g:\Nat\to\Nat$ such that for each $e$
such that $\varphi_e$ is total, $\mu_{g(e)}$ is total and is the measure representation of
the Bernoulli measure with success probability  the real with binary expansion $\varphi_e$.
Let $\mu_e[s]$ represent the state of approximation to $\mu_e$ at stage $s$ of the universal
approximation, so $\mu_e[s]$ is a basic open set of $\MM$.  Then the function
$\sigma\to\sup\{\mu(\sigma)\ |\ \mu\in \mu_e[s]\}$ is computable and the function
\[
d(e,\sigma)[s]=\ceil{-\log\big(\sup\{\mu(\sigma)\ |\ \mu\in \mu_e[s]\}\big)} - K(\sigma)[s]
\]
is a computable approximation to the $\mu_e$-deficiency of $\sigma$, in the case that
$\mu_e$ is total.
Since $A$ is high there exists a function 
$h:\Nat\times\Nat\to\{0,1\}$, $h\leq_T A$  such that
for each $e$, $\varphi_e$ is total if and only if $\lim_s h(e,s)=1$.
Define $cost(e,\sigma)=e+d(g(e),\sigma)[|\sigma|]$.
We define an $A$-computable learner $\VV$ as follows:
for each $\sigma$ let $\VV(\sigma)$ be $g(e)$ for the least index $e\leq |\sigma|$
which minimizes $cost(e,\sigma)$ subject to the condition $h(e,|\sigma|)=1$.

It remains to show that for each $X\in\twome$ which is random with respect to
a computable Bernoulli measure $\mu$, $\lim_n \VV(X\restr_n)$ exists and equals
an index of $\mu$. According to our working assumption about $X$,
there exist numbers $e$ such that $\varphi_e$ is total and
$\sup_n cost(e,X\restr_n)<\infty$. These numbers $e$ are the indices of reals in $\twome$
which are the binary representations of the success probability of the Bernoulli measure
with respect to which $X$ is random. Now consider the least $e$ with this property,
and which  minimizes $\sup_n cost(e,X\restr_n)$. 
Note that, by the definition of  $cost(e,\sigma)$, for each $k,\sigma$
there are only finitely many $e$ such that $cost(e,\sigma)<k$.
It follows by the construction of
$\VV$ that $\lim_n \VV(X\restr_n)=e$.

The proof for the class of all computable measures is the same as above,
except that we take $g$ to be the identity function.
\end{proof}

For the other direction of Corollary \ref{gyGJY8fGL},
let $\CC$ be the class of all computable reals, and 
assume that the computable measures are
weakly EX$[A]$-learnable. Then $\{\mu_Z\ |\ Z\in\CC\}$ is also
weakly EX$[A]$-learnable, and by Theorem 
\ref{WVmrKGDjbs} we get that $\CC$ is  EX$[A]$-learnable.
Then by \citep{AdlemanB91} it follows that $A$ is high.

\subsection{Applying Theorem \ref{d3lcvKUJi} to classes of Bernoulli measures}
Perhaps the most natural parametrization of measures on $\twome$ by reals
is the following.
\begin{defi}
Consider the function
$f_b:\twome\to\MM$ mapping
each $X\in\twome$ to the
Bernoulli measure with success probability the real whose binary expansion is $X$.
\end{defi}
Clearly $f_b$ is computable, but it is not injective since
dyadic reals have two different binary expansions.
In order to mitigate this inconvenience, we consider the following transformation.
\begin{defi}
Given any $\sigma\in\twomel$ or $X\in\twome$, let $\hat{\sigma},\hat{X}$
be the string or real respectively obtained from $\sigma,X$ by 
the digit replacement $0\to 01$, $1\to 10$.
Fore each class $\CC\subseteq\twome$ let $\hat{\CC}=\{\hat{X}\ |\ X\in\CC\}$.
\end{defi}
Since no real in $\CCh$ is dyadic, $f_b$ is injective on $\CCh$.
Moreover, $\CCh$ has the same effectivity properties as $\CC$;
for example it is effectively closed if and only if $\CC$ is.
Hence
the hypotheses of Theorem \ref{d3lcvKUJi} are satisfied
for $f:=f_b$ and $\DD:=\CCh$ for any effectively closed $\CC\subseteq\twome$.


\begin{lem}[Invariance under computable translation]\label{C9sahblZM3}
A class $\CC\subseteq\twome$ of computable reals is EX-learnable if and only if
the class $\CCh$ is EX-learnable.
The same is true of BC-learnability.
\end{lem}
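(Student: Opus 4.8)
The plan is to exploit that the digit substitution $X\mapsto\hat X$ is a computable, length-doubling injection whose inverse (decoding pairs by $01\mapsto 0$, $10\mapsto 1$) is also computable, and which is \emph{local}: for every $n$ we have $\hat X\restr_{2n}=\widehat{X\restr_n}$, so a prefix of $X$ determines and is determined by the corresponding even-length prefix of $\hat X$. Because both the map and its inverse are computable and total, a standard $s$-$m$-$n$ argument yields computable index-translation functions $p,q:\Nat\to\Nat$ such that whenever $e$ is an index of a computable real $X$, $p(e)$ is an index of $\hat X$, and whenever $e'$ is an index of $\hat X$, $q(e')$ is an index of $X$. In particular $X$ is computable if and only if $\hat X$ is, so $\CCh$ is again a class of computable reals, and learnability will transfer by pre-composing with decoding and post-composing with index translation.

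First I would record the decoding map on strings: let $\phi:\twomel\to\twomel$ be the computable function that decodes the maximal even-length prefix of its argument, so that $\phi(\hat X\restr_m)=X\restr_{\lfloor m/2\rfloor}$ for all $m$. For the forward EX direction, given an EX-learner $\LL$ for $\CC$ I would define $\LL'(\tau)=p\big(\LL(\phi(\tau))\big)$ and claim this EX-learns $\CCh$. For the reverse direction, given an EX-learner $\LL'$ for $\CCh$ I would define $\LL(\sigma)=q\big(\LL'(\hat\sigma)\big)$ and claim this EX-learns $\CC$; note $\hat\sigma$ has length $2|\sigma|$, so feeding $\LL'$ the string $\widehat{X\restr_n}$ is exactly feeding it $\hat X\restr_{2n}$. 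The BC case uses the identical two constructions.

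To verify the forward EX claim, fix $\hat X\in\CCh$ with $X\in\CC$; as $m\to\infty$ the string $\phi(\hat X\restr_m)=X\restr_{\lfloor m/2\rfloor}$ runs through all initial segments of $X$, so $\lim_m\LL(\phi(\hat X\restr_m))=\lim_n\LL(X\restr_n)$ is an index $e$ of $X$, whence $\lim_m\LL'(\hat X\restr_m)=p(e)$ is an index of $\hat X$. For the reverse claim, the even-length prefixes $\hat X\restr_{2n}=\widehat{X\restr_n}$ form a cofinal subsequence of the prefixes of $\hat X$, and since $\lim_m\LL'(\hat X\restr_m)$ exists it agrees with $\lim_n\LL'(\hat X\restr_{2n})=e'$, an index of $\hat X$; then $\lim_n\LL(X\restr_n)=q(e')$ is an index of $X$. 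The BC statements follow the same way: a cofinite-correctness guarantee for $\LL$ on the prefixes $X\restr_n$ transfers, via $\phi$ and $p$, to one for $\LL'$ on the prefixes of $\hat X$ (using that $\lfloor m/2\rfloor>n_0$ for all large $m$), and symmetrically via the even subsequence and $q$ in the other direction. The only point requiring any care — and the nearest thing to an obstacle — is the bookkeeping around the length doubling: ensuring $\phi$ is well defined on odd-length prefixes and that convergence (respectively, eventual correctness) along the subsequence of even lengths is inherited from the corresponding property along all lengths, which is immediate once the locality identity $\hat X\restr_{2n}=\widehat{X\restr_n}$ is in hand.
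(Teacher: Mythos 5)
Your proposal is correct and follows essentially the same route as the paper: both directions are handled by a computable index-translation function (your $p$, the paper's $g$) together with feeding the given learner the decoded (respectively, encoded) prefixes, with odd-length prefixes handled by falling back to the longest even-length prefix. The paper only writes out the forward direction and declares the converse and the BC case ``entirely similar,'' so your explicit treatment of the reverse direction via the cofinal subsequence of even-length prefixes is, if anything, slightly more complete.
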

\begin{proof}
Clearly 
$\CC$, $\CCh$ are computably isomorphic. 
Suppose that $\CC$ is EX or BC learnable by $\LL$.
Let $g$ be a computable function that maps each index $e$ of computable real $X$
to an index $g(e)$ of the computable real $\hat{X}$.
For each $\sigma\in \twome$
define $\LLast(\hat{\sigma})=g(L(\sigma))$. Moreover, for each $\tau$ 
which is a prefix of a real in $\widehat{\twomel}$ but whose length
is not a multiple of 2, define $\LLast(\tau)=\LLast(\rho)$ where $\rho$ is the largest initial 
segment of $\tau$ which is a multiple of 2. If $\tau$ is not a prefix of a real in $\widehat{\twomel}$
then let $\LLast(\tau)=0$.
In the case of EX learning, 
since for each real $X\in\CC$
the values $\LL(X\restr_n)$ converge to an index $e$ of $X$, it follows that
the values $\LLast(\hat{X}\restr_n)$ converge to the index $g(e)$ of $\hat{X}$, so
$\LLast$ is an EX-learner for $\CCh$.
The case for BC learning as well as the converse are entirely similar.
\end{proof}

\begin{lem}\label{lBLzXkW7lq}
A class of computable reals $\CC\subseteq \twome$ is EX-learnable if and only if
the class $f_b(\CCh)$ of Bernoulli measures is EX-learnable.
The same is true for BC learnability.
\end{lem}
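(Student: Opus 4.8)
The plan is to combine the invariance Lemma \ref{C9sahblZM3} with the first equivalence Theorem \ref{d3lcvKUJi}. The only subtlety is that the class $\CCh$ need not itself be effectively closed, so one cannot take $\DD := \CCh$ directly as the preceding paragraph does for effectively closed $\CC$. Instead I would apply Theorem \ref{d3lcvKUJi} with the larger set $\widehat{\twome}=\{\hat{X}\ |\ X\in\twome\}$, the image of the whole Cantor space under the digit-doubling map, playing the role of $\DD$, and with $\DDast := \CCh$.

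First I would verify that $\widehat{\twome}$ is effectively closed: a real $Y$ lies in $\widehat{\twome}$ exactly when $(Y(2k),Y(2k+1))\in\{(0,1),(1,0)\}$ for every $k$, so $Y\notin\widehat{\twome}$ iff $\exists k\,(Y(2k)=Y(2k+1))$, which is a $\Sigma^0_1$ condition; hence the complement of $\widehat{\twome}$ is effectively open. Next, since each length-two block of $\hat{X}$ contains exactly one $0$ and one $1$, every real in $\widehat{\twome}$ has infinitely many $0$s and infinitely many $1$s and is therefore not dyadic; consequently $f_b$ is injective on $\widehat{\twome}$, distinct reals there determine distinct success probabilities, and their images are distinct Bernoulli measures, which form an effectively orthogonal class as noted after Theorem \ref{d3lcvKUJi}. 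Finally, because $\CC$ consists of computable reals and the digit-doubling map is computable, $\CCh$ is a class of computable reals contained in $\widehat{\twome}$.

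With these verifications in hand the hypotheses of Theorem \ref{d3lcvKUJi} hold for $f:=f_b$, $\DD:=\widehat{\twome}$, and $\DDast:=\CCh$, so $\CCh$ is EX-learnable if and only if $f_b(\CCh)$ is EX-learnable, and likewise for BC learnability. Combining this with Lemma \ref{C9sahblZM3}, which states that $\CC$ is EX- (respectively BC-) learnable if and only if $\CCh$ is, yields the chain: $\CC$ learnable $\iff$ $\CCh$ learnable $\iff$ $f_b(\CCh)$ learnable, for both EX and BC, which is the assertion of the lemma.

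I do not expect a genuine obstacle, as the argument is a composition of two already established results. The single point requiring care is exactly the passage from the effectively-closed case emphasized just before the lemma to an arbitrary class of computable reals; this is handled precisely by enlarging the ambient set from $\CCh$ to the always effectively closed superset $\widehat{\twome}$, on which $f_b$ remains injective and effectively orthogonal, so that Theorem \ref{d3lcvKUJi} still applies with $\DDast:=\CCh$.
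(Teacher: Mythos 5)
Your proposal is correct and follows essentially the same route as the paper: the paper's proof likewise takes $\DD=\widehat{\twome}$ as the effectively closed ambient set, applies Theorem \ref{d3lcvKUJi} to $f_b$ with $\DDast=\CCh$, and combines this with Lemma \ref{C9sahblZM3}. Your extra verifications (that $\widehat{\twome}$ is effectively closed and that $f_b$ is injective with effectively orthogonal images there) are the same facts the paper records in the paragraph preceding the lemma.
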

\begin{proof}
By Lemma \ref{C9sahblZM3}, $\CC$ is EX-learnable if and only if 
$\CCh$ is. If we consider $\CCh$ as a subset of the effectively closed 
set $\DD=\widehat{\twome}$ and apply  Theorem \ref{d3lcvKUJi} for  $f_b$
we get that $\CCh$ is EX-learnable if and only if  $f_b(\CCh)$ is.
\end{proof}

\subsection{Proofs of the corollaries of Section \ref{Qx1nj3CucT}} \label{qLIwtAZfTH}
We conclude the proof
of Corollary \ref{P2b9kifIJR} by showing that
if an oracle can EX-learn all computable Bernoulli measures then it is high.
Note that 
learnability of an effectively orthogonal class of measures
is closed under subsets. 
Hence it suffices to show that if an oracle $A$
can EX-learn all computable Bernoulli measures with success probabilities
that have a binary expansion in $\widehat{\twome}$,
then it is high.
By a direct relativization of
Theorem  \ref{d3lcvKUJi} and Lemma \ref{lBLzXkW7lq}, 
the above working assumption on $A$ implies that
the class of computable reals is EX-learnable with oracle $A$.
Then by \citep{AdlemanB91}
it follows that $A$ is high. 

Next, we prove Corollary \ref{A3ce2GoRBo}, which says that
there exist two EX-learnable classes of computable (Bernoulli) measures such that
their union is not EX-learnable.
Blum and Blum \citep{BLUM1975125}
defined two classes  $S, T$ of computable functions which are 
EX-learnable but their union is not. 
Consider the classes
$\hat{S}, \hat{T}$, $\hat{S}\cup \hat{T}=\widehat{(S\cup T)}$.
By Corollary \ref{lBLzXkW7lq} the classes $f_b(\hat{S}), f_b(\hat{T})$
are learnable but the class $f_b(\hat{S}\cup \hat{T})$ is not.
The result follows by noticing  that $f_b(\hat{S})\cup f_b(\hat{T})=f_b(\hat{S}\cup \hat{T})$.

Next, we show \eqref{maMViSH2Qa} which says that oracles that are not $\Delta^0_2$ or
are not 1-generic, are not low for EX-learning for measures.
If $A\not\leq_T\emptyset'$
then by  \citep{FORTNOW1994231}
there exists a class $\CC$ of computable reals which is EX$[A]$-learnable but not
EX-learnable.
If $A\leq_T\emptyset'$ and $A$ is not 1-generic,
then by \citep{KUMMER1996214} 
there exists a class  $\CC$ of computable reals which is EX$[A]$-learnable but not
EX-learnable. Then \eqref{maMViSH2Qa} follows by these results, combined with
Corollary \ref{lBLzXkW7lq}.

Finally we prove Corollary
\ref{UT2HmCWUPe}, which says that
a learner can EX-learn all computable measures with finitely many queries
on an oracle $A$ if and only if $\emptyset''\leq_T A\oplus \emptyset'$.
We need the following lemma.
\begin{lem}\label{pvABJZlMm2}
If $A\leq_T B'$ then every class of computable measures which is EX-learnable by $A$ with
finitely many queries, is also EX-learnable by $B$.
\end{lem}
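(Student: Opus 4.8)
The plan is to simulate the given oracle learner while replacing its oracle $A$ by a $B$-computable approximation, exploiting the fact that along any single real only finitely many queries to $A$ are ever made.

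First I would invoke the Limit Lemma: since $A\leq_T B'$, there is a total $B$-computable function $(x,s)\mapsto A_s(x)$ with $\lim_s A_s(x)=A(x)$ for every $x$. Let $\LL\leq_T A$ be an EX$\ast[A]$-learner for $\CC$, and fix a Turing functional $\Phi$ with $\LL(\sigma)=\Phi^A(\sigma)$. The defining feature I will use is that for each real $X$ the set $F_X$ of all numbers queried by $\Phi^A$ during the computations $\Phi^A(X\restr_n)$, $n\in\Nat$, is finite. I would then define a candidate $B$-computable learner $\LLast$ by $\LLast(\sigma)=\Phi^{A_{|\sigma|}}(\sigma)$, that is, run $\Phi$ on $\sigma$ answering each oracle query $x$ by $A_{|\sigma|}(x)$. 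Since $(x,s)\mapsto A_s(x)$ is $B$-computable, $\LLast\leq_T B$; on the (irrelevant) inputs where this natural simulation fails to halt we output a default value, an arrangement that, as explained below, never affects the limit along the reals on which $\LL$ must succeed, so we may take $\LLast$ to be total.

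The heart of the argument is the claim that for each $\mu\in\CC$ and each $\mu$-random $X$ we have $\LLast(X\restr_n)=\LL(X\restr_n)$ for all sufficiently large $n$. To prove it, let $s_0$ be a stage beyond which $A_s$ agrees with $A$ on the finite set $F_X$; this exists because $F_X$ is finite and $A_s\to A$ pointwise. For $n\geq s_0$ I would compare the runs $\Phi^{A_n}(X\restr_n)$ and $\Phi^A(X\restr_n)$ step by step: as long as the two agree, the next query is the same number $x\in F_X$, and since $n\geq s_0$ the approximate answer $A_n(x)$ equals $A(x)$; hence by induction on the computation the two runs are identical, every query stays inside $F_X$, and $\Phi^{A_n}(X\restr_n)$ halts with output $\LL(X\restr_n)$ (in particular the natural simulation does halt, so the default is not used for large $n$). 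Consequently $\lim_n \LLast(X\restr_n)=\lim_n \LL(X\restr_n)$, and since $\LL$ EX-learns $\CC$ this common limit is an index of some $\mu'\in\CC$ with respect to which $X$ is random. Thus $\LLast$ is a $B$-computable EX-learner for $\CC$.

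The step I expect to be the main obstacle is exactly this correctness claim: ruling out that the simulation, run with the merely approximate oracle $A_n$, wanders into queries outside $F_X$ before the approximation has settled. This is where the finitely-many-queries hypothesis is indispensable, since the step-by-step induction shows the simulated run cannot leave $F_X$ once $A_n$ is correct there, because $\Phi^A$ itself never does. A secondary, routine point is securing totality of $\LLast$: a fixed recursive step bound need not dominate the (only $A$-computable) running time of $\Phi^A(X\restr_n)$, which is why the simulation is run to completion and totality is obtained through a default value that provably does not disturb the limit on the reals on which $\LL$ is required to succeed.
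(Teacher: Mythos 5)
Your approach is the same as the paper's: the paper's proof is only a sketch (deferring to the analogous result for reals from Fortnow et al.) that replaces $A$ by a $B$-computable limit approximation and notes that, because only finitely many queries are made along each real, the approximated learner eventually reproduces the original one and hence converges to the same limit. Your step-by-step agreement argument on the finite query set $F_X$ --- once $A_s$ has settled on $F_X$, the simulated run can never issue a query outside $F_X$ because it tracks the true run exactly --- is precisely the content the paper leaves implicit, and it is correct.

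One step fails as literally written, namely the definition of $\LLast$. You set $\LLast(\sigma)=\Phi^{A_{|\sigma|}}(\sigma)$ and propose to ``output a default value on the inputs where this natural simulation fails to halt.'' That is not a $B$-effective instruction: non-termination of $\Phi^{A_{|\sigma|}}(\sigma)$ cannot be detected, so the function you describe is not a total $B$-computable learner $\twomel\to\Nat$, and ``running the simulation to completion'' is incompatible with supplying a default. The standard repair is a step bound together with a fallback to shorter prefixes: let $\LLast(\sigma)$ be the output of $\Phi^{A_{|\sigma|}}(\sigma\restr_m)$ run for $|\sigma|$ steps, where $m\leq|\sigma|$ is largest such that this bounded computation halts (and a default if there is no such $m$). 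This is total and $B$-computable. Along a real $X$ on which $\Phi^A$ converges to $e$, choose $m_0$ past the convergence point and let $n$ exceed the settling time of $A_s$ on $F_X$ and the true running time of $\Phi^{A}(X\restr_{m_0})$; your agreement argument then shows that the largest halting $m$ is at least $m_0$ and that the value returned is $\Phi^{A}(X\restr_m)=e$, so $\lim_n\LLast(X\restr_n)=e$ as required. With this modification your proof is complete and coincides with the paper's intended argument.
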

\begin{proof}
This is entirely similar to the analogous result for EX-learning of classes computable reals
from \citep{FORTNOW1994231}.
By $A\leq_T B'$ one can obtain a $B$-computable function that approximates $A$.
Given an $A$-computable learner and replacing the oracle with the approximation given via oracle $B$,
the resulting learner will converge along every real on which the original learner converges
and uses finitely many queries on $A$. Moreover, in this case, the limit will agree with the limit
with respect to the original $A$-computable learner. This shows that
any class that is EX-learnable via the $A$-computable learner will also be EX-learned 
by the new $B$-computable learner.
\end{proof}

Now given an oracle $A$, by the
jump-inversion theorem, since $\emptyset'\leq_T A\oplus \emptyset'$,
there exists some $B$ such that $B'\equiv_T A\oplus \emptyset'$. So $A\leq_T B'$.
By Lemma \ref{pvABJZlMm2}, if the computable measures are EX-learnable 
with oracle $A$ and finitely many queries, then they will also be EX-learnable by $B$.
Then by Corollary \ref{gyGJY8fGL} it follows that 
$B$ is high, so $ B'\geq_T\emptyset''$ and $\emptyset''\leq_T A\oplus \emptyset'$
as required. 

Conversely, assume that $\emptyset''\leq_T A\oplus \emptyset'$. Let
$(\mu_e)$ be a universal enumeration of all partial computable 
measure representations with dyadic values and note that by the discussion of
Section \ref{Zs7iCd895A} it is sufficient to restrict our attention to these measures,
which may not include some measures with non-dyadic values.
By Jocksuch \citep{Jockusch:72*1} there exists a function $h\leq_T A$ such that
$(\mu_{h(e)})$ is a universal enumeration of all {\em total} computable measure
representations with dyadic values. The fact that uniformly computable families
of measures are EX-learnable (originally from \citep{VITANYI201713})
relativizes to any oracle. Since 
$(\mu_{h(e)})$ contains all computable measure representations with dyadic values,
it follows that the class of all computable measures is EX-learnable with oracle $A$.

\section{Conclusion and open questions}
We have presented  tools which allow to transfer many of the results of the theory
of learning of integer functions or reals based on
\citep{GOLD1967447}, to the theory of learning of probability distributions 
which was recently introduced in  \citep{VITANYI201713} and studied in 
\citep{Bienvenu2014,Bienvealgindpro}.
We demonstrated the usefulness of this result with numerous corollaries
that provide parallels between the two learning theories. We also
identified some differences; we found that although
in the special case of effectively orthogonal classes, the notions of 
Definitions \ref{nAemubaidf} and \ref{1PuUYa3Jz4} are closed under the subset relation,
in general they are not so. Intuitively, if we wish to learn a subclass
of a given class of computable measures, the task (compared to learning the original class)
becomes easier 
in one way and harder in another way: it is easier because we only need to consider success
of the learner on $\mu$-random reals for a smaller collection of measures $\mu$; it is
harder because the learner has fewer choices of indices that are correct answers 
along each real, since the class of measures at hand is smaller. 
%
%


We showed that the oracles needed for the EX-learning
of the computable measures are exactly  the oracles needed for
the EX-learning of the computable reals, which are the high oracles.
In the classic theory there exists no succinct characterization of the oracles
that BC-learn the computable functions. On the other hand,
Theorem \ref{WVmrKGDjbs} 
shows that if an oracle can BC-learn the class of computable (continuous)
measures, then it can also BC-learn the class of computable functions.

{\bf Open problem.}  
If an oracle can BC-learn the class of computable functions, is it necessarily
the case that it can learn the class of computable (continuous)
measures?

Another issue discussed is the low for EX-learning oracles for learning of measures.
We showed that every such oracle is also low for EX-learning in the classical learning theory
of reals. We do not know if the converse holds.

\bibliographystyle{abbrvnat}
\bibliography{exmelow}

\begin{thebibliography}{29}
\providecommand{\natexlab}[1]{#1}
\providecommand{\url}[1]{\texttt{#1}}
\expandafter\ifx\csname urlstyle\endcsname\relax
  \providecommand{\doi}[1]{doi: #1}\else
  \providecommand{\doi}{doi: \begingroup \urlstyle{rm}\Url}\fi

\bibitem[Adleman and Blum(1991)]{AdlemanB91}
L.~M. Adleman and M.~Blum.
\newblock Inductive inference and unsolvability.
\newblock \emph{J. Symb. Log.}, 56\penalty0 (3):\penalty0 891--900, 1991.

\bibitem[Ambainis(2001)]{Ambainis:2001}
A.~Ambainis.
\newblock Probabilistic inductive inference: A survey.
\newblock \emph{Theor. Comput. Sci.}, 264\penalty0 (1):\penalty0 155--167, Aug.
  2001.

\bibitem[Bienvenu and Monin(2012)]{Bienvenu:2012:VNB:235}
L.~Bienvenu and B.~Monin.
\newblock {V}on {N}eumann's {B}iased {C}oin {R}evisited.
\newblock In \emph{Proceedings of the 27th Annual IEEE/ACM Symposium on Logic
  in Computer Science}, LICS '12, pages 145--154, Washington, DC, USA, 2012.
  IEEE Computer Society.
\newblock ISBN 978-0-7695-4769-5.

\bibitem[Bienvenu et~al.(2014)Bienvenu, Monin, and Shen]{Bienvenu2014}
L.~Bienvenu, B.~Monin, and A.~Shen.
\newblock Algorithmic identification of probabilities is hard.
\newblock In \emph{Algorithmic Learning Theory: 25th International Conference,
  Bled, Slovenia, October 8-10 2014. Proceedings}, ALT 2014, pages 85--95,
  2014.

\bibitem[Bienvenu et~al.(2018)Bienvenu, Figueira, Monin, and
  Shen]{Bienvealgindpro}
L.~Bienvenu, S.~Figueira, B.~Monin, and A.~Shen.
\newblock Algorithmic identification of probabilities is hard.
\newblock \emph{Journal of Computer and System Sciences}, 95:\penalty0 98--108,
  2018.

\bibitem[Blum and Blum(1975)]{BLUM1975125}
L.~Blum and M.~Blum.
\newblock Toward a mathematical theory of inductive inference.
\newblock \emph{Information and Control}, 28\penalty0 (2):\penalty0 125--155,
  1975.

\bibitem[Day and Miller(2013)]{DMncompmea}
A.~R. Day and J.~S. Miller.
\newblock Randomness for non-computable measures.
\newblock \emph{Trans. Amer. Math. Soc.}, 365:\penalty0 3575--3591, 2013.

\bibitem[Downey and Hirschfeldt(2010)]{rodenisbook}
R.~G. Downey and D.~Hirschfeldt.
\newblock \emph{Algorithmic Randomness and Complexity}.
\newblock Springer, 2010.

\bibitem[Fortnow et~al.(1994)Fortnow, Gasarch, Jain, Kinber, Kummer, Kurtz,
  Pleszkovich, Slaman, Solovay, and Stephan]{FORTNOW1994231}
L.~Fortnow, W.~Gasarch, S.~Jain, E.~Kinber, M.~Kummer, S.~Kurtz,
  M.~Pleszkovich, T.~Slaman, R.~Solovay, and F.~Stephan.
\newblock Extremes in the degrees of inferability.
\newblock \emph{Annals of Pure and Applied Logic}, 66\penalty0 (3):\penalty0
  231--276, 1994.

\bibitem[G\'{a}cs(2005)]{Gacs:2005:UTA}
P.~G\'{a}cs.
\newblock Uniform test of algorithmic randomness over a general space.
\newblock \emph{Theor. Comput. Sci.}, 341\penalty0 (1):\penalty0 91--137, Sept.
  2005.

\bibitem[Gasarch and Pleszkoch(1989)]{Gasarch1989214}
W.~I. Gasarch and M.~B. Pleszkoch.
\newblock Learning via queries to an oracle.
\newblock In R.~Rivest, D.~Haussler, and M.~K. Warmuth, editors,
  \emph{Proceedings of the Second Annual Workshop on Computational Learning
  Theory}, pages 214--229. Morgan Kaufmann, San Francisco (CA), 1989.

\bibitem[Gold(1967)]{GOLD1967447}
E.~M. Gold.
\newblock Language identification in the limit.
\newblock \emph{Information and Control}, 10\penalty0 (5):\penalty0 447--474,
  1967.

\bibitem[Jockusch(1972)]{Jockusch:72*1}
C.~G. Jockusch, Jr.
\newblock Degrees in which the recursive sets are uniformly recursive.
\newblock \emph{Canad. J. Math.}, 24:\penalty0 1092--1099, 1972.

\bibitem[Juedes and Lutz(1995)]{JLweak}
D.~W. Juedes and J.~H. Lutz.
\newblock Weak completeness in $e_1$ and $e_2$.
\newblock \emph{Theoretical Computer Science}, 143:\penalty0 149--158, 1995.

\bibitem[Kearns et~al.(1994)Kearns, Mansour, Ron, Rubinfeld, Schapire, and
  Sellie]{Kearns:1994:LDD}
M.~Kearns, Y.~Mansour, D.~Ron, R.~Rubinfeld, R.~E. Schapire, and L.~Sellie.
\newblock On the learnability of discrete distributions.
\newblock In \emph{Proceedings of the Twenty-sixth Annual ACM Symposium on
  Theory of Computing}, STOC '94, pages 273--282, New York, NY, USA, 1994. ACM.

\bibitem[Kummer and Stephan(1996)]{KUMMER1996214}
M.~Kummer and F.~Stephan.
\newblock On the structure of degrees of inferability.
\newblock \emph{Journal of Computer and System Sciences}, 52\penalty0
  (2):\penalty0 214--238, 1996.

\bibitem[Levin(1976)]{levinuniftests}
L.~A. Levin.
\newblock Uniform tests for randomness.
\newblock \emph{Dokl. Akad. Nauk SSSR}, 227\penalty0 (1):\penalty0 33--35,
  1976.

\bibitem[Levin(1984)]{leviniandc/Levin84}
L.~A. Levin.
\newblock Randomness conservation inequalities; information and independence in
  mathematical theories.
\newblock \emph{Information and Control}, 61\penalty0 (1):\penalty0 15--37,
  1984.

\bibitem[Li and Vit{\'a}nyi(1997)]{Li.Vitanyi:93}
M.~Li and P.~M. Vit{\'a}nyi.
\newblock \emph{An introduction to {K}olmogorov complexity and its
  applications}.
\newblock Graduate Texts in Computer Science. Springer-Verlag, New York, second
  edition, 1997.
\newblock ISBN 0-387-94868-6.

\bibitem[Martin-L{\"o}f(1966)]{MR0223179}
P.~Martin-L{\"o}f.
\newblock The definition of random sequences.
\newblock \emph{Information and Control}, 9:\penalty0 602--619, 1966.

\bibitem[Odifreddi(1999)]{Odifreddi:99}
P.~G. Odifreddi.
\newblock \emph{Classical recursion theory. {V}ol. {II}}.
\newblock North-Holland Publishing Co., Amsterdam, 1999.

\bibitem[Osherson et~al.(1986)Osherson, Stob, and Weinstein]{STL1st}
D.~Osherson, M.~Stob, and S.~Weinstein.
\newblock \emph{Systems That Learn. First Edition.}
\newblock MIT Press, Cambridge, MA, 1986.

\bibitem[Pinker(1979)]{PINKER1979217}
S.~Pinker.
\newblock Formal models of language learning.
\newblock \emph{Cognition}, 7\penalty0 (3):\penalty0 217--283, 1979.

\bibitem[Pitt(1989)]{Pitt:1989:PII}
L.~Pitt.
\newblock Probabilistic inductive inference.
\newblock \emph{J. ACM}, 36\penalty0 (2):\penalty0 383--433, Apr. 1989.

\bibitem[Reimann and Slaman(2015)]{ReiSlaMRR}
J.~Reimann and T.~A. Slaman.
\newblock Measures and their random reals.
\newblock \emph{Trans. Amer. Math. Soc.}, 367\penalty0 (7):\penalty0
  5081--5097, 2015.

\bibitem[Slaman and Solovay(1991)]{SlamanSol:1991}
T.~A. Slaman and R.~Solovay.
\newblock When oracles do not help.
\newblock In \emph{Proceedings of the Fourth Annual Workshop on Computational
  Learning Theory}, COLT '91, pages 379--383, San Francisco, CA, USA, 1991.
  Morgan Kaufmann Publishers Inc.
\newblock ISBN 1-55860-213-5.

\bibitem[Vapnik(1982)]{Vapnik:1982:EDB}
V.~Vapnik.
\newblock \emph{Estimation of Dependences Based on Empirical Data: Springer
  Series in Statistics (Springer Series in Statistics)}.
\newblock Springer-Verlag New York, Inc., Secaucus, NJ, USA, 1982.
\newblock ISBN 0387907335.

\bibitem[Vit{\'a}nyi and Chater(2017)]{VITANYI201713}
P.~M.~B. Vit{\'a}nyi and N.~Chater.
\newblock Identification of probabilities.
\newblock \emph{Journal of Mathematical Psychology}, 76\penalty0 (Part
  A):\penalty0 13--24, 2017.

\bibitem[Weihrauch(1993)]{WEIHRAUCH1993191}
K.~Weihrauch.
\newblock Computability on computable metric spaces.
\newblock \emph{Theoretical Computer Science}, 113\penalty0 (2):\penalty0
  191--210, 1993.

\end{thebibliography}

\end{document}